\begin{document}
\newcommand {\emptycomment}[1]{} 

\baselineskip=14pt
\newcommand{\nc}{\newcommand}
\newcommand{\delete}[1]{}
\nc{\mfootnote}[1]{\footnote{#1}}
\nc{\todo}[1]{\tred{To do:} #1}

\nc{\mlabel}[1]{\label{#1}}  
\nc{\mcite}[1]{\cite{#1}}  
\nc{\mref}[1]{\ref{#1}}  
\nc{\mbibitem}[1]{\bibitem{#1}} 

\delete{
\nc{\mlabel}[1]{\label{#1}  
{\hfill \hspace{1cm}{\bf{{\ }\hfill(#1)}}}}
\nc{\mcite}[1]{\cite{#1}{{\bf{{\ }(#1)}}}}  
\nc{\mref}[1]{\ref{#1}{{\bf{{\ }(#1)}}}}  
\nc{\mbibitem}[1]{\bibitem[\bf #1]{#1}} 
}

\newtheorem{thm}{Theorem}[section]
\newtheorem{lem}[thm]{Lemma}
\newtheorem{cor}[thm]{Corollary}
\newtheorem{pro}[thm]{Proposition}
\newtheorem{ex}[thm]{Example}
\newtheorem{rmk}[thm]{Remark}
\newtheorem{defi}[thm]{Definition}
\newtheorem{pdef}[thm]{Proposition-Definition}
\newtheorem{condition}[thm]{Condition}
\newtheorem{notation}[thm]{Notation}

\renewcommand{\labelenumi}{{\rm(\alph{enumi})}}
\renewcommand{\theenumi}{\alph{enumi}}

\nc{\tred}[1]{\textcolor{red}{#1}}
\nc{\tblue}[1]{\textcolor{blue}{#1}}
\nc{\tgreen}[1]{\textcolor{green}{#1}}
\nc{\tpurple}[1]{\textcolor{purple}{#1}}
\nc{\btred}[1]{\textcolor{red}{\bf #1}}
\nc{\btblue}[1]{\textcolor{blue}{\bf #1}}
\nc{\btgreen}[1]{\textcolor{green}{\bf #1}}
\nc{\btpurple}[1]{\textcolor{purple}{\bf #1}}

\nc{\py}[1]{\textcolor{red}{Panyang:#1}}
\nc{\yy}[1]{\textcolor{blue}{Yanyong: #1}}
\nc{\lit}[2]{\textcolor{blue}{#1}{}} 
\nc{\yh}[1]{\textcolor{green}{Yunhe: #1}}


\nc{\twovec}[2]{\left(\begin{array}{c} #1 \\ #2\end{array} \right )}
\nc{\threevec}[3]{\left(\begin{array}{c} #1 \\ #2 \\ #3 \end{array}\right )}
\nc{\twomatrix}[4]{\left(\begin{array}{cc} #1 & #2\\ #3 & #4 \end{array} \right)}
\nc{\threematrix}[9]{{\left(\begin{matrix} #1 & #2 & #3\\ #4 & #5 & #6 \\ #7 & #8 & #9 \end{matrix} \right)}}
\nc{\twodet}[4]{\left|\begin{array}{cc} #1 & #2\\ #3 & #4 \end{array} \right|}

\nc{\rk}{\mathrm{r}}
\newcommand{\g}{\mathfrak g}
\newcommand{\h}{\mathfrak h}
\newcommand{\pf}{\noindent{$Proof$.}\ }
\newcommand{\frkg}{\mathfrak g}
\newcommand{\frkh}{\mathfrak h}
\newcommand{\Id}{\rm{Id}}
\newcommand{\gl}{\mathfrak {gl}}
\newcommand{\ad}{\mathrm{ad}}
\newcommand{\add}{\frka\frkd}
\newcommand{\frka}{\mathfrak a}
\newcommand{\frkb}{\mathfrak b}
\newcommand{\frkc}{\mathfrak c}
\newcommand{\frkd}{\mathfrak d}
\newcommand {\comment}[1]{{\marginpar{*}\scriptsize\textbf{Comments:} #1}}

\nc{\gensp}{V} 
\nc{\relsp}{\Lambda} 
\nc{\leafsp}{X}    
\nc{\treesp}{\overline{\calt}} 

\nc{\vin}{{\mathrm Vin}}    
\nc{\lin}{{\mathrm Lin}}    

\nc{\gop}{{\,\omega\,}}     
\nc{\gopb}{{\,\nu\,}}
\nc{\svec}[2]{{\tiny\left(\begin{matrix}#1\\
#2\end{matrix}\right)\,}}  
\nc{\ssvec}[2]{{\tiny\left(\begin{matrix}#1\\
#2\end{matrix}\right)\,}} 

\nc{\typeI}{local cocycle $3$-Lie bialgebra\xspace}
\nc{\typeIs}{local cocycle $3$-Lie bialgebras\xspace}
\nc{\typeII}{double construction $3$-Lie bialgebra\xspace}
\nc{\typeIIs}{double construction $3$-Lie bialgebras\xspace}

\nc{\bia}{{$\mathcal{P}$-bimodule ${\bf k}$-algebra}\xspace}
\nc{\bias}{{$\mathcal{P}$-bimodule ${\bf k}$-algebras}\xspace}

\nc{\rmi}{{\mathrm{I}}}
\nc{\rmii}{{\mathrm{II}}}
\nc{\rmiii}{{\mathrm{III}}}
\nc{\pr}{{\mathrm{pr}}}
\newcommand{\huaA}{\mathcal{A}}
\nc{\pll}{\beta}
\nc{\plc}{\epsilon}
\nc{\ass}{{\mathit{Ass}}}
\nc{\lie}{{\mathit{Lie}}}
\nc{\comm}{{\mathit{Comm}}}
\nc{\dend}{{\mathit{Dend}}}
\nc{\zinb}{{\mathit{Zinb}}}
\nc{\tdend}{{\mathit{TDend}}}
\nc{\prelie}{{\mathit{preLie}}}
\nc{\postlie}{{\mathit{PostLie}}}
\nc{\quado}{{\mathit{Quad}}}
\nc{\octo}{{\mathit{Octo}}}
\nc{\ldend}{{\mathit{ldend}}}
\nc{\lquad}{{\mathit{LQuad}}}

 \nc{\adec}{\check{;}} \nc{\aop}{\alpha}
\nc{\dftimes}{\widetilde{\otimes}} \nc{\dfl}{\succ} \nc{\dfr}{\prec}
\nc{\dfc}{\circ} \nc{\dfb}{\bullet} \nc{\dft}{\star}
\nc{\dfcf}{{\mathbf k}} \nc{\apr}{\ast} \nc{\spr}{\cdot}
\nc{\twopr}{\circ} \nc{\tspr}{\star} \nc{\sempr}{\ast}
\nc{\disp}[1]{\displaystyle{#1}}
\nc{\bin}[2]{ (_{\stackrel{\scs{#1}}{\scs{#2}}})}  
\nc{\binc}[2]{ \left (\!\! \begin{array}{c} \scs{#1}\\
    \scs{#2} \end{array}\!\! \right )}  
\nc{\bincc}[2]{  \left ( {\scs{#1} \atop
    \vspace{-.5cm}\scs{#2}} \right )}  
\nc{\sarray}[2]{\begin{array}{c}#1 \vspace{.1cm}\\ \hline
    \vspace{-.35cm} \\ #2 \end{array}}
\nc{\bs}{\bar{S}} \nc{\dcup}{\stackrel{\bullet}{\cup}}
\nc{\dbigcup}{\stackrel{\bullet}{\bigcup}} \nc{\etree}{\big |}
\nc{\la}{\longrightarrow} \nc{\fe}{\'{e}} \nc{\rar}{\rightarrow}
\nc{\dar}{\downarrow} \nc{\dap}[1]{\downarrow
\rlap{$\scriptstyle{#1}$}} \nc{\uap}[1]{\uparrow
\rlap{$\scriptstyle{#1}$}} \nc{\defeq}{\stackrel{\rm def}{=}}
\nc{\dis}[1]{\displaystyle{#1}} \nc{\dotcup}{\,
\displaystyle{\bigcup^\bullet}\ } \nc{\sdotcup}{\tiny{
\displaystyle{\bigcup^\bullet}\ }} \nc{\hcm}{\ \hat{,}\ }
\nc{\hcirc}{\hat{\circ}} \nc{\hts}{\hat{\shpr}}
\nc{\lts}{\stackrel{\leftarrow}{\shpr}}
\nc{\rts}{\stackrel{\rightarrow}{\shpr}} \nc{\lleft}{[}
\nc{\lright}{]} \nc{\uni}[1]{\tilde{#1}} \nc{\wor}[1]{\check{#1}}
\nc{\free}[1]{\bar{#1}} \nc{\den}[1]{\check{#1}} \nc{\lrpa}{\wr}
\nc{\curlyl}{\left \{ \begin{array}{c} {} \\ {} \end{array}
    \right .  \!\!\!\!\!\!\!}
\nc{\curlyr}{ \!\!\!\!\!\!\!
    \left . \begin{array}{c} {} \\ {} \end{array}
    \right \} }
\nc{\leaf}{\ell}       
\nc{\longmid}{\left | \begin{array}{c} {} \\ {} \end{array}
    \right . \!\!\!\!\!\!\!}
\nc{\ot}{\otimes} \nc{\sot}{{\scriptstyle{\ot}}}
\nc{\otm}{\overline{\ot}}
\nc{\ora}[1]{\stackrel{#1}{\rar}}
\nc{\ola}[1]{\stackrel{#1}{\la}}
\nc{\pltree}{\calt^\pl}
\nc{\epltree}{\calt^{\pl,\NC}}
\nc{\rbpltree}{\calt^r}
\nc{\scs}[1]{\scriptstyle{#1}} \nc{\mrm}[1]{{\rm #1}}
\nc{\dirlim}{\displaystyle{\lim_{\longrightarrow}}\,}
\nc{\invlim}{\displaystyle{\lim_{\longleftarrow}}\,}
\nc{\mvp}{\vspace{0.5cm}} \nc{\svp}{\vspace{2cm}}
\nc{\vp}{\vspace{8cm}} \nc{\proofbegin}{\noindent{\bf Proof: }}
\nc{\proofend}{$\blacksquare$ \vspace{0.5cm}}
\nc{\freerbpl}{{F^{\mathrm RBPL}}}
\nc{\sha}{{\mbox{\cyr X}}}  
\nc{\ncsha}{{\mbox{\cyr X}^{\mathrm NC}}} \nc{\ncshao}{{\mbox{\cyr
X}^{\mathrm NC,\,0}}}
\nc{\shpr}{\diamond}    
\nc{\shprm}{\overline{\diamond}}    
\nc{\shpro}{\diamond^0}    
\nc{\shprr}{\diamond^r}     
\nc{\shpra}{\overline{\diamond}^r}
\nc{\shpru}{\check{\diamond}} \nc{\catpr}{\diamond_l}
\nc{\rcatpr}{\diamond_r} \nc{\lapr}{\diamond_a}
\nc{\sqcupm}{\ot}
\nc{\lepr}{\diamond_e} \nc{\vep}{\varepsilon} \nc{\labs}{\mid\!}
\nc{\rabs}{\!\mid} \nc{\hsha}{\widehat{\sha}}
\nc{\lsha}{\stackrel{\leftarrow}{\sha}}
\nc{\rsha}{\stackrel{\rightarrow}{\sha}} \nc{\lc}{\lfloor}
\nc{\rc}{\rfloor}
\nc{\tpr}{\sqcup}
\nc{\nctpr}{\vee}
\nc{\plpr}{\star}
\nc{\rbplpr}{\bar{\plpr}}
\nc{\sqmon}[1]{\langle #1\rangle}
\nc{\forest}{\calf}
\nc{\altx}{\Lambda_X} \nc{\vecT}{\vec{T}} \nc{\onetree}{\bullet}
\nc{\Ao}{\check{A}}
\nc{\seta}{\underline{\Ao}}
\nc{\deltaa}{\overline{\delta}}
\nc{\trho}{\tilde{\rho}}

\nc{\rpr}{\circ}
\nc{\dpr}{{\tiny\diamond}}
\nc{\rprpm}{{\rpr}}

\nc{\mmbox}[1]{\mbox{\ #1\ }} \nc{\ann}{\mrm{ann}}
\nc{\Aut}{\mrm{Aut}} \nc{\can}{\mrm{can}}
\nc{\twoalg}{{two-sided algebra}\xspace}
\nc{\colim}{\mrm{colim}}
\nc{\Cont}{\mrm{Cont}} \nc{\rchar}{\mrm{char}}
\nc{\cok}{\mrm{coker}} \nc{\dtf}{{R-{\rm tf}}} \nc{\dtor}{{R-{\rm
tor}}}
\renewcommand{\det}{\mrm{det}}
\nc{\depth}{{\mrm d}}
\nc{\Div}{{\mrm Div}} \nc{\End}{\mrm{End}} \nc{\Ext}{\mrm{Ext}}
\nc{\Fil}{\mrm{Fil}} \nc{\Frob}{\mrm{Frob}} \nc{\Gal}{\mrm{Gal}}
\nc{\GL}{\mrm{GL}} \nc{\Hom}{\mrm{Hom}} \nc{\hsr}{\mrm{H}}
\nc{\hpol}{\mrm{HP}} \nc{\id}{\mrm{id}} \nc{\im}{\mrm{im}}
\nc{\incl}{\mrm{incl}} \nc{\length}{\mrm{length}}
\nc{\LR}{\mrm{LR}} \nc{\mchar}{\rm char} \nc{\NC}{\mrm{NC}}
\nc{\mpart}{\mrm{part}} \nc{\pl}{\mrm{PL}}
\nc{\ql}{{\QQ_\ell}} \nc{\qp}{{\QQ_p}}
\nc{\rank}{\mrm{rank}} \nc{\rba}{\rm{RBA }} \nc{\rbas}{\rm{RBAs }}
\nc{\rbpl}{\mrm{RBPL}}
\nc{\rbw}{\rm{RBW }} \nc{\rbws}{\rm{RBWs }} \nc{\rcot}{\mrm{cot}}
\nc{\rest}{\rm{controlled}\xspace}
\nc{\rdef}{\mrm{def}} \nc{\rdiv}{{\rm div}} \nc{\rtf}{{\rm tf}}
\nc{\rtor}{{\rm tor}} \nc{\res}{\mrm{res}} \nc{\SL}{\mrm{SL}}
\nc{\Spec}{\mrm{Spec}} \nc{\tor}{\mrm{tor}} \nc{\Tr}{\mrm{Tr}}
\nc{\mtr}{\mrm{sk}}

\nc{\ab}{\mathbf{Ab}} \nc{\Alg}{\mathbf{Alg}}
\nc{\Algo}{\mathbf{Alg}^0} \nc{\Bax}{\mathbf{Bax}}
\nc{\Baxo}{\mathbf{Bax}^0} \nc{\RB}{\mathbf{RB}}
\nc{\RBo}{\mathbf{RB}^0} \nc{\BRB}{\mathbf{RB}}
\nc{\Dend}{\mathbf{DD}} \nc{\bfk}{{\bf k}} \nc{\bfone}{{\bf 1}}
\nc{\base}[1]{{a_{#1}}} \nc{\detail}{\marginpar{\bf More detail}
    \noindent{\bf Need more detail!}
    \svp}
\nc{\Diff}{\mathbf{Diff}} \nc{\gap}{\marginpar{\bf
Incomplete}\noindent{\bf Incomplete!!}
    \svp}
\nc{\FMod}{\mathbf{FMod}} \nc{\mset}{\mathbf{MSet}}
\nc{\rb}{\mathrm{RB}} \nc{\Int}{\mathbf{Int}}
\nc{\Mon}{\mathbf{Mon}}
\nc{\remarks}{\noindent{\bf Remarks: }}
\nc{\OS}{\mathbf{OS}} 
\nc{\Rep}{\mathbf{Rep}}
\nc{\Rings}{\mathbf{Rings}} \nc{\Sets}{\mathbf{Sets}}
\nc{\DT}{\mathbf{DT}}

\nc{\BA}{{\mathbb A}} \nc{\CC}{{\mathbb C}} \nc{\DD}{{\mathbb D}}
\nc{\EE}{{\mathbb E}} \nc{\FF}{{\mathbb F}} \nc{\GG}{{\mathbb G}}
\nc{\HH}{{\mathbb H}} \nc{\LL}{{\mathbb L}} \nc{\NN}{{\mathbb N}}
\nc{\QQ}{{\mathbb Q}} \nc{\RR}{{\mathbb R}} \nc{\BS}{{\mathbb{S}}} \nc{\TT}{{\mathbb T}}
\nc{\VV}{{\mathbb V}} \nc{\ZZ}{{\mathbb Z}}


\nc{\calao}{{\mathcal A}} \nc{\cala}{{\mathcal A}}
\nc{\calc}{{\mathcal C}} \nc{\cald}{{\mathcal D}}
\nc{\cale}{{\mathcal E}} \nc{\calf}{{\mathcal F}}
\nc{\calfr}{{{\mathcal F}^{\,r}}} \nc{\calfo}{{\mathcal F}^0}
\nc{\calfro}{{\mathcal F}^{\,r,0}} \nc{\oF}{\overline{F}}
\nc{\calg}{{\mathcal G}} \nc{\calh}{{\mathcal H}}
\nc{\cali}{{\mathcal I}} \nc{\calj}{{\mathcal J}}
\nc{\call}{{\mathcal L}} \nc{\calm}{{\mathcal M}}
\nc{\caln}{{\mathcal N}} \nc{\calo}{{\mathcal O}}
\nc{\calp}{{\mathcal P}} \nc{\calq}{{\mathcal Q}} \nc{\calr}{{\mathcal R}}
\nc{\calt}{{\mathcal T}} \nc{\caltr}{{\mathcal T}^{\,r}}
\nc{\calu}{{\mathcal U}} \nc{\calv}{{\mathcal V}}
\nc{\calw}{{\mathcal W}} \nc{\calx}{{\mathcal X}}
\nc{\CA}{\mathcal{A}}

\nc{\fraka}{{\mathfrak a}} \nc{\frakB}{{\mathfrak B}}
\nc{\frakb}{{\mathfrak b}} \nc{\frakd}{{\mathfrak d}}
\nc{\oD}{\overline{D}}
\nc{\frakF}{{\mathfrak F}} \nc{\frakg}{{\mathfrak g}}
\nc{\frakm}{{\mathfrak m}} \nc{\frakM}{{\mathfrak M}}
\nc{\frakMo}{{\mathfrak M}^0} \nc{\frakp}{{\mathfrak p}}
\nc{\frakS}{{\mathfrak S}} \nc{\frakSo}{{\mathfrak S}^0}
\nc{\fraks}{{\mathfrak s}} \nc{\os}{\overline{\fraks}}
\nc{\frakT}{{\mathfrak T}}
\nc{\oT}{\overline{T}}
\nc{\frakX}{{\mathfrak X}} \nc{\frakXo}{{\mathfrak X}^0}
\nc{\frakx}{{\mathbf x}}
\nc{\frakTx}{\frakT}      
\nc{\frakTa}{\frakT^a}        
\nc{\frakTxo}{\frakTx^0}   
\nc{\caltao}{\calt^{a,0}}   
\nc{\ox}{\overline{\frakx}} \nc{\fraky}{{\mathfrak y}}
\nc{\frakz}{{\mathfrak z}} \nc{\oX}{\overline{X}}

\font\cyr=wncyr10

\nc{\redtext}[1]{\textcolor{red}{#1}}

\title{On a class of infinite simple Lie conformal algebras}

\author{Yanyong Hong}
\address{Department of Mathematics, Hangzhou Normal University,
Hangzhou, 311121, P.R.China}
\email{hongyanyong2008@yahoo.com}

\author{Yang Pan}
\address{Department of Mathematics and Physics, Hefei University,
 Hefei, 230601, P.R. China}
\email{ypan@outlook.de}

\author{Haibo Chen}
\address{ School of  Statistics and Mathematics, Shanghai Lixin University of  Accounting and Finance,   Shanghai,201209, P.R. China}
\email{rebel1025@126.com}

\subjclass[2010]{16D70, 16S32, 16S99, 16W20}
\keywords{Lie conformal algebra, Representation, Central extensions, Conformal derivation, Intermediate series module}

\begin{abstract}
In this paper, we study a class of infinite simple Lie conformal algebras associated to a class of generalized Block type Lie algebras. The central extensions, conformal derivations and free intermediate series modules of this class of Lie conformal algebras are determined. Moreover, we also show that these Lie conformal algebras do not have any non-trivial finite conformal modules. Consequently, these Lie conformal algebras cannot be embedded into $gc_N$ for any positive integer $N$.
\end{abstract}

\maketitle

\section{Introduction}
The notion of Lie conformal algebra, introduced by
V.Kac, gives an axiomatic description of the operator product expansion (or rather
its Fourier transform) of chiral fields in conformal field theory. It plays important roles in quantum field theory and vertex operator algebras (see \cite{K1}). Moreover, it has many applications in the theory of infinite-dimensional Lie algebras satisfying the locality property in \cite{K} and Hamiltonian formalism in the theory of nonlinear evolution equations (see \cite{BDK}).

A Lie conformal algebra is said to be finite if it is finitely generated  as a $\mathbb{C}[\partial]$-module. Otherwise, it is called infinite. The structure theory (see \cite{DK1}), cohomology theory (see \cite{BKV}) and representation theory (see \cite{CK1})
of finite Lie conformal algebras have been well-developed. Finite simple Lie conformal algebras are classified in \cite{CK1}, which shows that a finite simple Lie conformal algebra is either isomorphic to the Virasoro Lie conformal algebra or the current Lie conformal algebra $\text{Cur}(\mathfrak{g})$ associated to a finite-dimensional simple Lie algebra $\mathfrak{g}$. All irreducible finite conformal modules of finite simple Lie conformal algebras are classified in \cite{CK1}, the extensions of these conformal modules are studied in \cite{CK2}
and cohomology groups of finite simple Lie conformal algebras with some conformal modules
are characterized in \cite{BKV}. However, to the best of our knowledge, there is a little progression on the study of structure theory, representation theory and cohomology theory of infinite simple Lie conformal algebras.
In  previous works, the main studying object in infinite simple Lie conformal algebras is the general Lie conformal algebra $gc_N$ (see \cite{BKL1, BKL2, DK2, S, SY1}), which plays the same important role in the theory of Lie conformal algebras as the general Lie algebra $gl_N$ does in that of Lie algebras. Besides $gc_N$, these are few examples of infinite simple Lie conformal algebras.

Recently, in \cite{HW}, the authors provide a method to construct infinite simple Lie conformal algebras using the relation between quadratic Lie conformal algebras and Gel'fand-Dorfman bialgebras (see \cite{X1} or \cite{GD}). Moreover, they present three classes of infinite simple Lie conformal algebras in \cite{HW} which are obtained from the corresponding Gel'fand-Dorfman bialgebras. Therefore, for enriching the theory of infinite simple Lie conformal algebras, it is natural to investigate the structure theory and representation theory of these infinite simple Lie conformal algebras. The first class is $CL_1(c)$ which can be seen as some generalization of the graded Lie conformal algebra of $gc_1$ whose finite representation is studied in \cite{SY1}. The second class can be seen as subalgebras of the third class, and their structure theory and representation theory may be similar. In this paper, we plan to investigate some structure theory and representation theory of the second class of infinite simple Lie conformal algebras $CL(b,\varphi)$.
For the definition of $CL(b,\varphi)$, one can refer to Section 2.
Note that when $\Delta=Z$, the central extensions, conformal derivations and free conformal modules of rank one of $CL(b,0)$ are studied in \cite{FCH}. In this paper,
we plan to characterize central extensions, conformal derivations, finite conformal modules and free intermediate series modules of $CL(b,\varphi)$.
Moreover, it should be pointed out that  the coefficient algebra $\text{Coeff}(CL(b,\varphi))$ of $CL(b,\varphi)$ can be regarded as some generalization of Block type Lie algebras (see \cite{Bl, DZ}).

This paper is organized as follows. In Section 2, the definitions of Lie conformal algebra, quadratic Lie conformal algebra and $CL(b,\varphi)$ are recalled.
In Section 3, the central extensions of $CL(b,\varphi)$ by a one-dimensional center $\mathbb{C}\mathfrak{c}$ are characterized. Using this result, we obtain some central extensions of the infinite-dimensional Lie algebra $\text{Coeff}(CL(b,\varphi))$. In Section 4, we show that all conformal derivations of $CL(b,\varphi)$ are inner. In Section 5,
it is proved that $CL(b,\varphi)$ does not have any non-trivial finite conformal modules, and this provides an example of
an infinite simple Lie conformal algebra that cannot be embedded into $gc_N$ for any $N$. In Section 6, we give a classification of free intermediate series modules of $CL(b,\varphi)$.

Throughout this paper, we denote by $\mathbb{C}$ the field of complex
numbers, by $\mathbb{N}$ the set of natural numbers (i.e.
$\mathbb{N}=\{0, 1, 2,\cdots\}$) and by $\mathbb{Z}$ the set of integer
numbers. Let $\Delta$ be an infinite additive subgroup of $\mathbb{C}$ and $\mathbb{C}^+$ be the additive group of $\mathbb{C}$. All tensors over $\mathbb{C}$ are denoted by $\otimes$.
Moreover, if $A$ is a vector space, then the space of polynomials of $\lambda$ with coefficients in $A$ is denoted by $A[\lambda]$.

\section{Preliminaries}
In this section, we recall some definitions and results about Lie conformal algebras. The interested reader may consult  \cite{K1}
for related  facts.
\begin{defi}
A \emph{Lie conformal algebra} $R$ is a $\mathbb{C}[\partial]$-module with a $\lambda$-bracket $[\cdot_\lambda \cdot]$ which defines a $\mathbb{C}$-bilinear
map from $R\times R\rightarrow R[\lambda]$ satisfying
\begin{eqnarray*}
&&[\partial a_\lambda b]=-\lambda [a_\lambda b],~~~[a_\lambda \partial b]=(\lambda+\partial)[a_\lambda b], ~~\text{(conformal sesquilinearity)}\\
&&[a_\lambda b]=-[b_{-\lambda-\partial}a],~~~~\text{(skew-symmetry)}\\
&&[a_\lambda[b_\mu c]]=[[a_\lambda b]_{\lambda+\mu} c]+[b_\mu[a_\lambda c]],~~~~~~\text{(Jacobi identity)}
\end{eqnarray*}
for $a$, $b$, $c\in R$.
\end{defi}

Here, $[a_\lambda b]=\sum_{n=0}^\infty\frac{\lambda^{n}}{n!}(a_{(n)}b)$, where $a_{(n)}b$ is called the \emph{$n$-th product} of $a$ and $b$. If a Lie conformal algebra $R$ is a finitely generated
$\mathbb{C}[\partial]$-module, then it is called \emph{finite}; otherwise, it is said to be \emph{infinite}.

Moreover, there is an important Lie algebra associated with a Lie conformal algebra.
Assume that $R$ is a Lie conformal algebra. Let Coeff$(R)$ be the quotient
of the vector space with basis $a_n$ $(a\in R, n\in\mathbb{Z})$ by
the subspace spanned over $\mathbb{C}$ by
elements:
$$(\alpha a)_n-\alpha a_n,~~(a+b)_n-a_n-b_n,~~(\partial
a)_n+na_{n-1},~~~\text{where}~~a,~~b\in R,~~\alpha\in \mathbb{C},~~n\in
\mathbb{Z}.$$ The operation on Coeff$(R)$ is given as follows:
\begin{equation}\label{106}
[a_m, b_n]=\sum_{j\in \mathbb{N}}\left(\begin{array}{ccc}
m\\j\end{array}\right)(a_{(j)}b)_{m+n-j}.\end{equation} Then
Coeff$(R)$ is a Lie algebra and it is called the\emph{ coefficient algebra} of $R$ (see \cite{K1}).

\begin{defi}
A \emph{module} $M$ over a Lie conformal algebra $R$ is a $\mathbb{C}[\partial]$-module endowed with a $\mathbb{C}$-bilinear map
$R\times M\longrightarrow M[\lambda]$, $(a, v)\mapsto a_\lambda v$, satisfying the following axioms
\begin{eqnarray*}
&&(\partial a)_\lambda v  = -\lambda a_\lambda v, \\
&&  a_\lambda(\partial v) = (\partial+\lambda)a_\lambda v, \\
&&  [a_{\lambda} b]_{\lambda+\mu}v =  a_\lambda(b_{\mu} v)-b_\mu(a_{\lambda} v).
\end{eqnarray*}
for $a, b\in R, v\in M$.
If a module of $R$ is a finitely generated
$\mathbb{C}[\partial]$-module, then it is called \emph{finite}; otherwise, it is said to be \emph{infinite}.
\end{defi}

Given a module $M$ over a Lie conformal algebra $R$, we can naturally obtain a module of Coeff$(R)$. Set $a_\lambda v=\sum_{n=0}^\infty\frac{\lambda^{n}}{n!}(a_{(n)}v)$. Let $E(M)$ be the quotient
of the vector space with basis $v_n$ $(v\in M, n\in\mathbb{Z})$ by
the subspace spanned over $\mathbb{C}$ by
elements:
$$(\alpha v)_n-\alpha v_n,~~(u+v)_n-u_n-v_n,~~(\partial
v)_n+nv_{n-1},~~~\text{where}~~u,~~v\in R,~~\alpha\in \mathbb{C},~~n\in
\mathbb{Z}.$$
The operation of Coeff$(R)$ on $E(M)$ is given as follows:
\begin{equation}\label{106}
a_m v_n=\sum_{j\in \mathbb{N}}\left(\begin{array}{ccc}
m\\j\end{array}\right)(a_{(j)}v)_{m+n-j},
\end{equation}
which endows $E(M)$ as a module over Coeff$(R)$.

\begin{defi}
A Lie conformal algebra $R$ is $\Delta$-graded if $R=\oplus_{\alpha\in \Delta}R_\alpha$, where each $R_\alpha$ is a $\mathbb{C}[\partial]$-submodule and $[{R_\alpha}_\lambda R_\beta]\subset R_{\alpha+\beta}[\lambda]$ for any $\alpha$,
$\beta\in \Delta$.

Similarly, an $R$-module $V$ is called $\Delta$-graded if $V=\oplus_{\alpha\in \Delta}V_\alpha$, where each $V_\alpha$ is a $\mathbb{C}[\partial]$-submodule and ${R_\alpha}_\lambda V_\beta \subset V_{\alpha+\beta}[\lambda]$ for any $\alpha$,
$\beta\in \Delta$. In addition, if each $V_\alpha$
can be generated by one element $v_\alpha \in V_\alpha$ over $\mathbb{C}[\partial]$, we call $V$ an intermediate series
module of $R$. An intermediate series $R$-module $V$ is called free if each $V_\alpha$ is freely generated
by some $v_\alpha\in V_\alpha$ over $\mathbb{C}[\partial]$.

\end{defi}
\begin{defi}
 For a Lie conformal algebra $R$, if there exists a vector space $V$ such that $R=\mathbb{C}[\partial]V$ is a free
$\mathbb{C}[\partial]$-module and the $\lambda$-bracket is of the following form:
\begin{eqnarray*}
[a_{\lambda} b]=\partial u+\lambda v+ w,~~~~~~\text{$a$, $b\in V$,}
\end{eqnarray*}
where $u$, $v$, $w\in V$, then $R$ is called a \emph{quadratic Lie conformal algebra}.
\end{defi}

\begin{defi}(see \cite{GD} or \cite{X1})
A \emph{Gel'fand-Dorfman bialgebra} $V$ is a Lie algebra $(V,[\cdot,\cdot])$ with a binary operation $\circ$ such that $(V,\circ)$ forms a Novikov algebra where the operation $\circ$ satisfies the following conditions
\begin{eqnarray}
&&(a\circ b)\circ c-a\circ (b\circ c)=(b\circ a)\circ c-b\circ (a\circ c),\\
&&(a\circ b)\circ c=(a\circ c)\circ b,
\end{eqnarray}
and the following compatibility condition holds:
\begin{eqnarray}\label{eqq3}
[a\circ b, c]-[a\circ c, b]+[a,b]\circ c-[a,c]\circ b-a\circ [b,c]=0,
\end{eqnarray}
for $a$, $b$, and $c\in V$. We usually denote it by $(V,\circ,[\cdot,\cdot])$.
\end{defi}

An equivalent characterization of quadratic Lie conformal algebras is given as follows.
\begin{thm}(see \cite{GD} or \cite{X1})
$R=\mathbb{C}[\partial]V$ is a quadratic Lie conformal algebra if and only if the $\lambda$-bracket of $R$ is given as
follows
$$[a_{\lambda} b]=\partial(b\circ a)+[b, a]+\lambda(b\circ a+a\circ b),\qquad \text{$a$, $b\in V$},$$
and $(V, \circ, [\cdot,\cdot])$ is a Gel'fand-Dorfman bialgebra. Therefore, $R$ is called the quadratic Lie conformal algebra corresponding to
the Gel'fand-Dorfman bialgebra $(V, \circ, [\cdot,\cdot])$.
\end{thm}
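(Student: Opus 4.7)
The proof breaks into two implications, both handled by direct expansion of the defining axioms and extraction of coefficients of independent monomials in the formal variables $\lambda,\mu,\partial$.

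\textbf{Necessity.} Start from the most general quadratic form
\[
[a_\lambda b]=\partial\, u(a,b)+\lambda\, v(a,b)+w(a,b),
\]
where $u,v,w\colon V\otimes V\to V$ are bilinear maps to be identified. The plan is first to use conformal sesquilinearity and skew-symmetry $[a_\lambda b]=-[b_{-\lambda-\partial}a]$ to pin down the relations among $u,v,w$. Substituting $\mu=-\lambda-\partial$ in $[b_\mu a]=\partial u(b,a)+\mu v(b,a)+w(b,a)$ and remembering that $\partial$ acts on $v(b,a)\in V$, one obtains
\[
-[b_{-\lambda-\partial}a]=\partial\bigl(v(b,a)-u(b,a)\bigr)+\lambda\, v(b,a)-w(b,a).
\]
Comparing coefficients of $1,\lambda,\partial$ (which are independent because $V\subset R$ sits inside the free $\mathbb{C}[\partial]$-module) gives $v(a,b)=v(b,a)$, $w(a,b)=-w(b,a)$, and $u(a,b)+u(b,a)=v(a,b)$. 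Defining $a\circ b:=u(b,a)$ and $[a,b]:=w(b,a)$ then rewrites the bracket in the stated form, and forces $[\cdot,\cdot]$ to be anti-symmetric on $V$.

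Next I would plug this bracket into the Jacobi identity $[a_\lambda[b_\mu c]]=[[a_\lambda b]_{\lambda+\mu}c]+[b_\mu[a_\lambda c]]$ and expand using the conformal sesquilinearity rules $[a_\lambda\partial x]=(\lambda+\partial)[a_\lambda x]$ and $[\partial x_{\lambda+\mu} y]=-(\lambda+\mu)[x_{\lambda+\mu}y]$. Each side is a polynomial in $\lambda,\mu$ with coefficients in $R=\mathbb{C}[\partial]V$, so comparing coefficients of the monomials $\lambda^i\mu^j\partial^k$ (with constant-in-$V$ residues) yields a list of identities in $V$. The expectation is:
\begin{itemize}
\item the coefficient of $\lambda^0\mu^0\partial^0$ reproduces the Jacobi identity for $[\cdot,\cdot]$, so $(V,[\cdot,\cdot])$ is a Lie algebra;
\item the highest-degree coefficients in $\lambda,\mu$ produce the two Novikov axioms---right-commutativity $(a\circ b)\circ c=(a\circ c)\circ b$ from the $\lambda\mu$ coefficient, and left-symmetry $(a\circ b)\circ c-a\circ(b\circ c)=(b\circ a)\circ c-b\circ(a\circ c)$ from the mixed $\partial$-terms;
\item the remaining mixed coefficients, after using the Novikov axioms already extracted, collapse to the single compatibility relation \eqref{eqq3}.
\end{itemize}

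\textbf{Sufficiency.} Conversely, given a Gel'fand-Dorfman bialgebra $(V,\circ,[\cdot,\cdot])$, extend the proposed formula $\mathbb{C}[\partial]$-sesquilinearly to define a $\lambda$-bracket on $R=\mathbb{C}[\partial]V$. Skew-symmetry is immediate from the anti-symmetry of $[\cdot,\cdot]$ and the symmetry of the ``$\lambda$-coefficient'' $b\circ a+a\circ b$; it is exactly the reverse of the calculation above. For the Jacobi identity one expands both sides as before and regroups the resulting polynomial in $\lambda,\mu,\partial$, at which point every monomial's coefficient vanishes precisely because of the Lie, Novikov and compatibility axioms packed into the definition of a Gel'fand-Dorfman bialgebra.

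The only real obstacle is the bookkeeping in the Jacobi expansion: there are roughly a dozen monomials in $\lambda,\mu,\partial$ to track in each of the three terms of Jacobi, and one must identify precisely which Gel'fand-Dorfman axiom corresponds to which coefficient. Organizing the calculation by first peeling off the purely $[\cdot,\cdot]$-valued constant term (forcing Lie Jacobi), then the top-degree $\lambda,\mu$ terms (forcing the two Novikov identities), and finally the cross terms (forcing \eqref{eqq3}) keeps the argument tractable and makes the equivalence transparent in both directions.
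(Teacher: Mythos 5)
The paper does not prove this statement itself --- it quotes it from \cite{GD} and \cite{X1} --- and your proposal reproduces exactly the standard argument of those sources: write the bracket as $\partial u(a,b)+\lambda v(a,b)+w(a,b)$, use sesquilinearity and skew-symmetry to force $v$ symmetric, $w$ antisymmetric and $u(a,b)+u(b,a)=v(a,b)$ (your computation of this step is correct and pins down the stated form of the bracket), and then expand the conformal Jacobi identity and match coefficients of the monomials in $\lambda,\mu,\partial$ against the Gel'fand--Dorfman axioms, in both directions. The one caveat is that the decisive coefficient comparison is left as an outline, and your guessed attribution of axioms to monomials is off in detail: for instance, right-commutativity $(a\circ b)\circ c=(a\circ c)\circ b$ falls out of the $\partial^{2}$ coefficient rather than the $\lambda\mu$ one, the $\lambda\mu$ coefficient only gives a symmetrized identity, and left-symmetry and the compatibility condition emerge only after combining several coefficients with the identities already extracted. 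Carrying the expansion through does yield precisely the Novikov, Lie and compatibility axioms (and conversely these axioms kill every coefficient), so what remains is routine bookkeeping rather than a flaw in the method.
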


Finally, we introduce the studying object in this paper.
\begin{defi}
$CL(b,\varphi)=\oplus_{\alpha\in \Delta} \mathbb{C}[\partial]x_\alpha$ is a Lie conformal algebra with the $\lambda$-bracket:
\begin{gather}
[{x_\alpha}_\lambda x_\beta]=((\alpha+b)\partial+(\alpha+\beta+2b)\lambda)x_{\alpha+\beta}\nonumber\\
+\frac{1}{b}(\varphi(\alpha)\beta-\varphi(\beta)\alpha+b(\varphi(\alpha)-\varphi(\beta)))x_{\alpha+\beta},
\end{gather}
where $\alpha, \beta\in \Delta$, $2b\notin \Delta$ and $\varphi:\Delta\rightarrow \mathbb{C}^+$ is a group homomorphism.
\end{defi}
 Note that in what follows, when we write $\varphi(3b)$, it means that $3b\in \Delta$.

By the discussion in \cite{HW}, $CL(b,\varphi)$ is a class of infinite simple Lie conformal algebras.

\begin{rmk}
Obviously, $CL(b,\varphi)$ is a quadratic Lie conformal algebra corresponding to the Gel'fand-Dorfman bialgebra $(V=\oplus_{\alpha\in \Delta}\mathbb{C}x_\alpha, \circ, [\cdot,\cdot])$ with the Novikov algebra operation and Lie bracket as follows
\begin{eqnarray}
&&{x_\alpha}\circ x_\beta=(\beta+b)x_{\alpha+\beta},\\
&&[x_\alpha, x_\beta]=\frac{1}{b}(\varphi(\beta)\alpha-\varphi(\alpha)\beta+b(\varphi(\beta)-\varphi(\alpha)))x_{\alpha+\beta}.
\end{eqnarray}

$\text{Coeff}(CL(b,\varphi))$ has a basis $\{x_{\alpha,i}|\alpha\in \Delta, i\in \mathbb{Z}\}$ and
the Lie bracket is given by
\begin{eqnarray*}
&&[x_{\alpha,i},x_{\beta,j}]=(i(\beta+b)-j(\alpha+b))x_{\alpha+\beta,i+j-1}\\
&&+\frac{1}{b}(\varphi(\alpha)\beta-\varphi(\beta)\alpha+b(\varphi(\alpha)-\varphi(\beta)))x_{\alpha+\beta,i+j}.
\end{eqnarray*}
\end{rmk}
\begin{rmk}
When $\Delta=\mathbb{Z}$, there is a Lie conformal algebra $CL=\oplus_{\alpha\in \mathbb{Z}}\mathbb{C}x_\alpha$ with the following $\lambda$-bracket
\begin{gather}
[{x_\alpha}_\lambda x_\beta]=(\alpha\partial+(\alpha+\beta)\lambda)x_{\alpha+\beta},
\end{gather}
for any $\alpha$, $\beta\in \mathbb{Z}$. Note that this Lie conformal algebra is not simple and the free intermediate series modules of $CL$ are studied in \cite{GXY}.
\end{rmk}

\section{Central extensions}
In this section, we will study central extensions of $CL(b,\varphi)$.

An extension of a Lie conformal algebra $R$ by an abelian Lie conformal algebra $C$ is a short exact sequence of Lie conformal algebras
\begin{eqnarray*}
0\rightarrow C\rightarrow \widehat{R}\rightarrow R\rightarrow 0.
\end{eqnarray*}
In this case, $\widehat{R}$ is called an \emph{extension} of $R$ by $C$. This extension is \emph{central} if $\partial C=0$ and $[C_\lambda \widehat{R}]=0$.

In the following, we focus on the central extension $\widehat{R}$ of $R$ by a one-dimensional center $\mathbb{C}\mathfrak{c}$. This implies that $\widehat{R}=R\oplus \mathbb{C}\mathfrak{c}$, and
\begin{eqnarray*}
[a_\lambda b]_{\widehat{R}}=[a_\lambda b]_R+\alpha_\lambda(a,b)\mathfrak{c}~~~\qquad\text{for~~all~~$a$, $b\in R$,}
\end{eqnarray*}
where $\alpha_\lambda(\cdot,\cdot): R\times R\rightarrow \mathbb{C}[\lambda]$ is a $\mathbb{C}$-bilinear map. By the axioms of Lie conformal algebra, $\alpha_\lambda$ should satisfy the following properties (for all $a$, $b$, $c\in R$) :
\begin{eqnarray}
\label{c1}&&\alpha_\lambda(\partial a,b)=-\lambda \alpha_\lambda(a,b)=-\alpha_\lambda(a,\partial b),\\
\label{c2}&&\alpha_\lambda(a,b)=-\alpha_{-\lambda}(b,a),\\
\label{c3}&&\alpha_\lambda(a,[b_\mu c])-\alpha_\mu(b,[a_\lambda c])=\alpha_{\lambda+\mu}([a_\lambda b],c).
\end{eqnarray}

\begin{pro}\label{t1}(see Theorem 3.1 in \cite{H})
Let $\widehat{R}=R\oplus\mathbb{C}\mathfrak{c}$ be a central extension of a quadratic Lie conformal algebra $R=\mathbb{C}[\partial]V$  corresponding to $(V,\circ, [\cdot,\cdot])$ by a one-dimensional center $\mathbb{C}\mathfrak{c}$.
Set the $\lambda$-bracket of $\widehat{R}$ by
\begin{eqnarray}
\label{e2}\widetilde{[a_\lambda b]}=\partial(b\circ a)+\lambda(a\ast b)+[b,a]+\alpha_\lambda(a,b)\mathfrak{c},
\end{eqnarray}
where $a$, $b\in V$, $a\ast b=a\circ b+b\circ a$ and $\alpha_\lambda(a,b)\in \mathbb{C}[\lambda]$. Assume that $\alpha_\lambda(a,b)=\sum_{i=0}^n\lambda^i\alpha_i(a,b)$ for any $a$, $b\in V$, in which there exist some $a$, $b \in V$ such that
$\alpha_n(a,b)\neq 0$. Then we obtain, for any $a$, $b$, $c\in V$,\\
(1) If $n>3$,  $\alpha_n(a\circ b, c)=0$ ;\\
(2) If $n\leq 3$,
\begin{eqnarray}
\label{eqqq1}&&\alpha_i(a,b)=(-1)^{i+1}\alpha_i(b,a), \text{ for any $i\in\{0,1,2,3\}$}, \\
\label{eqqe1}&&\alpha_3(a,c\circ b)=\alpha_3(a\circ b,c)=\alpha_3(b\circ a, c),\\
\label{eqq2}&&\alpha_2(a,c\circ b)+\alpha_3(a,[c,b])=\alpha_2(a\circ b,c)+\alpha_3([b,a],c),\\
\label{eqq3}&&\alpha_2(a,b\ast c)+\alpha_2(b\circ a,c)=2\alpha_2(a\circ b,c)+3\alpha_3([b,a],c),\\
\label{eqq4}&&\alpha_1(a,c\circ b)+\alpha_2(a,[c,b])=\alpha_1(a\circ b,c)+\alpha_2([b,a],c),\\
\label{eqq5}&&\alpha_1(a,b\ast c)-\alpha_1(b,a\ast c)=-\alpha_1(b\circ a,c)+\alpha_1(a\circ b,c)+2\alpha_2([b,a],c),\\
\label{eqq6}&&\alpha_0(a,c\circ b)+\alpha_1(a,[c,b])-\alpha_0(b, a\ast c)=\alpha_0(a\circ b,c)+\alpha_1([b,a],c),\\
\label{eqq7}&&\alpha_0(a,[c,b])-\alpha_0(b,[c,a])=\alpha_0([b,a],c);
\end{eqnarray}
(3) Such two cocycles $\alpha_\lambda(\cdot,\cdot)$ and $\alpha_\lambda^{'}(\cdot,\cdot)$ are equivalent if and only if there exists a linear map $\Phi:V\rightarrow \mathbb{C}$ such that
\begin{eqnarray}
\alpha_\lambda(a,b)=\alpha_\lambda^{'}(a,b)+\lambda \Phi(a\ast b)+\Phi([b,a]),~~~~\text{for~~all~~$a$, $b\in V$.}
\end{eqnarray}
\end{pro}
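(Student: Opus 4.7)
The plan is to convert the cocycle conditions \eqref{c1}--\eqref{c3} into polynomial identities in $\lambda$ and $\mu$, then extract constraints on the coefficients $\alpha_i(\cdot,\cdot)$ by matching monomials. The skew-symmetry \eqref{c2} translates directly: substituting $\alpha_\lambda(a,b) = \sum_i \lambda^i \alpha_i(a,b)$ into $\alpha_\lambda(a,b) = -\alpha_{-\lambda}(b,a)$ and comparing coefficients of $\lambda^i$ yields \eqref{eqqq1}. For the remaining content I expand the Jacobi-type identity \eqref{c3} by substituting the quadratic bracket $[a_\lambda b] = \partial(b\circ a) + \lambda(a\ast b) + [b,a]$ and using the sesquilinearity consequence $\alpha_\lambda(a,\partial x) = \lambda\alpha_\lambda(a,x)$. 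The result is
\begin{align*}
&\lambda\alpha_\lambda(a,c\circ b) + \mu\alpha_\lambda(a,b\ast c) + \alpha_\lambda(a,[c,b]) - \mu\alpha_\mu(b,c\circ a)\\
&\quad{}- \lambda\alpha_\mu(b,a\ast c) - \alpha_\mu(b,[c,a])\\
&\qquad{}= -(\lambda+\mu)\alpha_{\lambda+\mu}(b\circ a,c) + \lambda\alpha_{\lambda+\mu}(a\ast b,c) + \alpha_{\lambda+\mu}([b,a],c),
\end{align*}
a polynomial identity in $\lambda,\mu$ from which each stated relation follows by comparing the coefficient of a single monomial $\lambda^j\mu^k$.

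For part (1), matching the coefficient of $\lambda^{n+1}\mu^0$ yields $\alpha_n(a,c\circ b) = \alpha_n(a\circ b,c)$. Next, observe that the only left-hand side terms carrying both $\lambda$ and $\mu$ are $\mu\alpha_\lambda(a,b\ast c)$ and $-\lambda\alpha_\mu(b,a\ast c)$, whose $\mu$- and $\lambda$-degrees are exactly one; hence for $n \geq 3$ the left-hand side contribution to the $\lambda^{n-1}\mu^2$-coefficient vanishes, while the right-hand side evaluates to $-\binom{n+1}{2}\alpha_n(b\circ a,c) + \binom{n}{2}\alpha_n(a\ast b,c)$. This simplifies to $(n-1)\alpha_n(a\circ b,c) = 2\alpha_n(b\circ a,c)$; swapping $a$ and $b$ and combining produces $(n-3)(n+1)\alpha_n(a\circ b,c) = 0$, which forces $\alpha_n(a\circ b,c) = 0$ for every $n > 3$.

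For part (2), with $n \leq 3$, one systematically matches the coefficients of the remaining monomials $\lambda^j\mu^k$ with $j+k \leq 4$: the Novikov-only relation \eqref{eqqe1} and the top-degree parts of the subsequent equations arise from higher-degree monomials, whereas the equations involving the Lie bracket $[\cdot,\cdot]$ come from lower-degree monomials where the terms $\alpha_i(\cdot,[\cdot,\cdot])$ and $\alpha_i([\cdot,\cdot],\cdot)$ first contribute. For part (3), any isomorphism of extensions restricting to the identity on $R$ and on $\mathbb{C}\mathfrak{c}$ has the form $f(a) = a + \phi(a)\mathfrak{c}$; $\mathbb{C}[\partial]$-linearity together with $\partial\mathfrak{c} = 0$ forces $\phi$ to vanish on $\partial R$, so $\phi$ is determined by $\Phi := \phi|_V : V \to \mathbb{C}$. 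Comparing $f([a_\lambda b]_{\widehat{R}})$ with $[f(a)_\lambda f(b)]_{\widehat{R}'}$ for $a,b \in V$, the term $\lambda(a\ast b)$ contributes $\lambda\Phi(a\ast b)\mathfrak{c}$ (whereas $\partial(b\circ a)$ contributes nothing since $\partial\mathfrak{c} = 0$), and $[b,a]$ contributes $\Phi([b,a])\mathfrak{c}$, yielding the claimed change-of-cocycle formula.

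The main technical obstacle is the bookkeeping in part (2): each target equation draws contributions from several monomials, and one must carefully track both the binomial coefficients arising from expanding $(\lambda+\mu)^i$ and the interactions between Novikov products and Lie brackets.
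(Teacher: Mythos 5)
Your proposal is correct, and it is essentially the same argument the paper relies on: the paper does not prove Proposition \ref{t1} itself (it is quoted from \cite{H}), but your expanded cocycle identity is exactly the paper's (\ref{cee}) in the proof of Corollary \ref{co1}, and your comparison of the $\lambda^{n-1}\mu^2$ (and swapped) coefficients for $n>3$ is the same computation the paper performs there. The single-monomial matchings you only sketch for part (2) do yield precisely (\ref{eqqe1})--(\ref{eqq7}) (the coefficients of $\lambda^4$ and $\lambda^2\mu^2$ give (\ref{eqqe1}), $\lambda^3$ gives (\ref{eqq2}), $\lambda^2\mu$ gives (\ref{eqq3}), $\lambda^2$ gives (\ref{eqq4}), $\lambda\mu$ gives (\ref{eqq5}), $\lambda$ gives (\ref{eqq6}), and the constant term gives (\ref{eqq7})), and your part (3) is the standard equivalence-of-extensions computation, so the sketch fills in correctly.
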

\begin{cor}\label{co1}
Let $V=\oplus_{\alpha\in \Delta}\mathbb{C}x_\alpha$. Consider the  central extension
$\widehat{CL(b,\varphi)}=CL(b,\varphi)\oplus\mathbb{C}\mathfrak{c}$ of $CL(b,\varphi)$
with the $\lambda$-bracket given by
$\widetilde{[{x_\alpha}_\lambda {x_\beta}]}=[{x_\alpha}_\lambda x_{\beta}]+\alpha_\lambda(x_\alpha,x_\beta)\mathfrak{c}$ for any
$\alpha$, $\beta\in\Delta$.
Then we have
\[
\alpha_\lambda(x_\alpha,x_\beta)=\sum_{i=0}^3\lambda^i\alpha_i(x_\alpha,x_\beta)
\]
for any $\alpha$, $\beta\in \Delta$, where $\alpha_i(\cdot,\cdot):V\times V\rightarrow \mathbb{C}$ are bilinear forms for any $i\in\{0,1,2,3\}$ and they satisfy (\ref{eqqq1})-(\ref{eqq7}).
\end{cor}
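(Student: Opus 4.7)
The strategy is to apply Proposition \ref{t1} directly, so the real content of the corollary is to establish that the polynomial degree of $\alpha_\lambda$ is at most $3$; once this is done, the listed relations (\ref{eqqq1})--(\ref{eqq7}) are immediate from part (2) of Proposition \ref{t1}.

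Write $\alpha_\lambda(x_\alpha,x_\beta)=\sum_{i\geq 0}\lambda^i\alpha_i(x_\alpha,x_\beta)$ and suppose towards a contradiction that this polynomial, viewed globally in $\lambda$ over all pairs $(\alpha,\beta)$, has top degree $n>3$ realised by some choice of indices. By Proposition \ref{t1}(1), we must have $\alpha_n(a\circ b,c)=0$ for all $a,b,c\in V$. Plugging in the explicit Novikov product $x_\alpha\circ x_\beta=(\beta+b)x_{\alpha+\beta}$ from Remark~2.10, this becomes
\begin{equation*}
(\beta+b)\,\alpha_n(x_{\alpha+\beta},x_\gamma)=0 \qquad \text{for all } \alpha,\beta,\gamma\in\Delta.
\end{equation*}

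Now for any prescribed $\delta,\gamma\in\Delta$, the fact that $\Delta$ is an infinite additive subgroup of $\mathbb{C}$ guarantees the existence of $\beta\in\Delta$ with $\beta+b\neq 0$ (we need only avoid the single value $\beta=-b$). Setting $\alpha=\delta-\beta\in\Delta$ and dividing by the nonzero scalar $\beta+b$ yields $\alpha_n(x_\delta,x_\gamma)=0$ for all $\delta,\gamma\in\Delta$, i.e.\ $\alpha_n\equiv 0$, contradicting the assumption that $n$ is the top degree. Hence the degree is at most $3$.

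With $\alpha_\lambda(x_\alpha,x_\beta)=\sum_{i=0}^{3}\lambda^i\alpha_i(x_\alpha,x_\beta)$ established, part (2) of Proposition \ref{t1} applies verbatim and yields the required identities (\ref{eqqq1})--(\ref{eqq7}). The only substantive point in this argument is the degree bound; this is essentially routine once one observes that the Novikov product in $CL(b,\varphi)$ acts on the first slot of $\alpha_n$ merely by the scalar $\beta+b$, so the sole potential obstacle would be if every $\beta\in\Delta$ satisfied $\beta+b=0$, which is ruled out by $|\Delta|=\infty$.
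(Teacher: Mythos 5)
There is a genuine gap at the crucial step. Your contradiction argument starts by assuming that the degrees of the polynomials $\alpha_\lambda(x_\alpha,x_\beta)$ admit a finite maximum $n$ over all pairs $(\alpha,\beta)$ (``top degree realised by some choice of indices''). Since $V=\oplus_{\alpha\in\Delta}\mathbb{C}x_\alpha$ is infinite-dimensional, each $\alpha_\lambda(x_\alpha,x_\beta)$ is a polynomial of some degree $n_{\alpha,\beta}$, but nothing guarantees a priori that these degrees are uniformly bounded over all pairs; establishing such a bound is precisely the substance of the corollary. Moreover, Proposition \ref{t1} is stated under the hypothesis that $\alpha_\lambda(a,b)=\sum_{i=0}^{n}\lambda^i\alpha_i(a,b)$ with one and the same $n$ for all $a,b\in V$, so invoking its part (1) to exclude $n>3$ presupposes exactly the uniform boundedness you have not proved; in the case where the degrees $n_{\alpha,\beta}$ are unbounded there is no ``top degree'' and your argument produces no contradiction at all.

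The paper closes this gap with a local argument that you would need to supply: fix $\alpha,\beta,\gamma\in\Delta$ and write out the cocycle identity (\ref{c3}) in the form (\ref{cee}); only finitely many pairs of basis vectors occur in that single identity, so the finitely many polynomials appearing there have a common degree bound $n=n(\alpha,\beta,\gamma)$. If $n>3$, comparing the coefficients of $\lambda^2\mu^{n-1}$ and of $\lambda^{n-1}\mu^2$ in (\ref{cee}) gives a two-by-two linear system forcing $\alpha_n(x_\alpha\circ x_\beta,x_\gamma)=\alpha_n(x_\beta\circ x_\alpha,x_\gamma)=0$, hence $\alpha_n(x_{\alpha+\beta},x_\gamma)=0$ because $x_\alpha\circ x_\beta=(\beta+b)x_{\alpha+\beta}$ and $\beta+b\neq 0$ (this is automatic: $b\notin\Delta$ since $2b\notin\Delta$ and $\Delta$ is a group, so there is no value of $\beta$ to avoid). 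Iterating downwards kills every coefficient of degree $>3$ for that triple, and letting the triple vary yields the uniform bound, after which Proposition \ref{t1} applies as you say. Your use of the surjectivity of the Novikov product and the infiniteness of $\Delta$ is fine as far as it goes, but you must either reproduce such a coefficient comparison for fixed triples or otherwise justify a uniform degree bound before Proposition \ref{t1}(1) can legitimately be quoted.
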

\begin{proof}
For any $\alpha$, $\beta\in \Delta$, set $\alpha_\lambda(x_\alpha,x_\beta)=\sum_{i=0}^{n_{\alpha,\beta}}\lambda^i\alpha_i(x_\alpha,x_\beta)$ where
$\alpha_i(\cdot,\cdot)$ are bilinear forms on $V$ and $n_{\alpha,\beta}$ is a non-negative integer depending on $\alpha$ and $\beta$. Setting $a=x_\alpha$, $b=x_\beta$ and $c=x_\gamma$ in (\ref{c3}) and by (\ref{c1}) and (\ref{c2}), we get
\begin{gather}
\label{cee}\lambda\alpha_\lambda(x_\alpha,x_\gamma\circ x_\beta)+\mu\alpha_\lambda(x_\alpha, x_\beta\ast x_\gamma)+\alpha_\lambda(x_\alpha, [x_\gamma,x_\beta])
-\mu\alpha_\mu(x_\beta,x_\gamma\circ x_\alpha)\\
-\lambda\alpha_\mu(x_\beta,x_\alpha\ast x_\gamma)-\alpha_\mu(x_\beta,[x_\gamma,x_\alpha])\nonumber\\
=(-\lambda-\mu)\alpha_{\lambda+\mu}(x_\beta\circ x_\alpha, x_\gamma)+\lambda \alpha_{\lambda+\mu}(x_\alpha\ast x_\beta,x_\gamma)+\alpha_{\lambda+\mu}([x_\beta,x_\alpha],x_\gamma).\nonumber
\end{gather}
For fixed $\alpha$, $\beta$, $\gamma$, there are only finite elements of $V$ appearing in $\alpha_\lambda(\cdot,\cdot)$ in (\ref{cee}). Therefore, we may assume the degrees of all $\alpha_\lambda(\cdot,\cdot)$ in (\ref{cee}) are smaller than some non-negative integer. So, we set $\alpha_\lambda(x_\alpha,x_\gamma\circ x_\beta)
=\sum_{i=0}^n\lambda^i\alpha_i(x_\alpha, x_\gamma\circ x_\beta)$, $\cdots$ and
$\alpha_{\lambda+\mu}(x_\alpha\ast x_\beta,x_\gamma)=\sum_{i=0}^n(\lambda+\mu)^i\alpha_i(x_\alpha\ast x_\beta,x_\gamma)$. Of course, here, $n$ depends on
$\alpha$, $\beta$ and $\gamma$.

If $n>3$, by comparing the coefficients of $\lambda^2\mu^{n-1}$ and $\lambda^{n-1}\mu^2$ in (\ref{cee}), we get
\begin{eqnarray*}
n\alpha_n(x_\alpha\circ x_\beta,x_\gamma)-C_n^2\alpha_n(x_\beta\circ x_\alpha,x_\gamma)=0,\\
C_n^2\alpha_n(x_\alpha\circ x_\beta,x_\gamma)-n\alpha_n(x_\beta\circ x_\alpha,x_\gamma)=0.
\end{eqnarray*}
Therefore, $\alpha_n(x_\alpha\circ x_\beta,x_\gamma)=\alpha_n(x_\beta\circ x_\alpha,x_\gamma)=0$. Since
 $x_\alpha\circ x_\beta=(\beta+b)x_{\alpha+\beta}$, we get $\alpha_n(x_{\alpha+\beta},x_\gamma)=0$. Thus, $\alpha_n([x_\beta,x_\alpha],x_\gamma)=0$. Repeating this process, we will have
$\alpha_m(x_\alpha\circ x_\beta,x_\gamma)=\alpha_m(x_\beta\circ x_\alpha,x_\gamma)=0$ for all $n\geq m> 3$.

According to the discussion above, for any $\alpha$, $\beta$, $\gamma\in \Delta$, we arrive at
$\alpha_m(x_\alpha\circ x_\beta,x_\gamma)=0$ for all $m>3$, implying that $\alpha_m(x_{\alpha+\beta},x_\gamma)=0$ for all $m>3$. As a result,
$\alpha_m(x,c)=0$ for all $m>3$ and any $x$ and $c\in V$. Consequently, $\alpha_\lambda(a,b)=\sum_{i=0}^3 \lambda^i\alpha_i(a,b)$, and the proof of this corollary can be completed by Proposition \ref{t1}.

\end{proof}

\begin{thm}\label{t2}
When  $3b\notin \Delta$ or $\varphi(3b)\neq 0$, there are no non-trivial  central extensions of $CL(b,\varphi)$ by a one-dimensional center $\mathbb{C}\mathfrak{c}$.

When $\varphi(3b)= 0$,
all equivalent classes of central extension of $CL(b,\varphi)$ by a one-dimensional center $\mathbb{C}\mathfrak{c}$
are $CL(b,\varphi)(g)$ with the following non-trivial $\lambda$-brackets
\begin{gather}
[{x_\alpha}_\lambda x_\beta]=((\alpha+b)\partial+(\alpha+\beta+2b)\lambda)x_{\alpha+\beta}+\frac{1}{b}(\varphi(\alpha)\beta-\varphi(\beta)\alpha+b(\varphi(\alpha)-\varphi(\beta)))x_{\alpha+\beta}\nonumber,\\
 \text{if $\alpha+\beta\neq -3b$},\nonumber\\
 [{x_\alpha}_\lambda x_{-3b-\alpha}]=((\alpha+b)\partial-b\lambda-\varphi(\alpha))x_{-3b}
+g(\alpha) \mathfrak{c},
\end{gather}
for some group homomorphism $g:\Delta \rightarrow \mathbb{C}^{+}$ satisfying $g(3b)=0$. Moreover, $CL(b,\varphi)(g)$ is equivalent to $CL(b,\varphi)(g^{'})$ if and only if there exists some $k\in \mathbb{C}$ such that $g=g^{'}+k\varphi$.
\end{thm}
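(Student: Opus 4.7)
The plan is to begin from Corollary \ref{co1}, which bounds $\deg_\lambda\alpha_\lambda(x_\alpha,x_\beta)\le 3$ and supplies the constraint system (\ref{eqqq1})--(\ref{eqq7}). Because $CL(b,\varphi)$ is $\Delta$-graded and $\mathfrak{c}$ is central, I first decompose $\alpha_\lambda$ into homogeneous components: without loss of generality $\mathfrak{c}$ lives in a single degree $\delta\in\Delta$, so $\alpha_i(x_\alpha,x_\beta)=0$ unless $\alpha+\beta=\delta$. The problem then reduces to analyzing the graded cocycle space degree by degree.

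Next I would strip off the higher-$\lambda$ coefficients in sequence. Substituting $a\circ b=(\beta+b)x_{\alpha+\beta}$ into (\ref{eqqe1}) gives $(\beta-\alpha)\alpha_3(x_{\alpha+\beta},x_\gamma)=0$, and the infiniteness of $\Delta$ allows one to pick $\alpha\neq\beta$, forcing $\alpha_3\equiv 0$. Applying the same idea to (\ref{eqq2}) gives $\alpha_2(x_\alpha,x_\beta)=F_2(\alpha+\beta)$, which is then killed by the antisymmetry from (\ref{eqqq1}). An analogous step in (\ref{eqq4}) yields $\alpha_1(x_\alpha,x_\beta)=F_1(\alpha+\beta)$ for some function $F_1:\Delta\to\mathbb{C}$, and (\ref{eqq5}) becomes automatic.

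The core of the proof is the analysis of $\alpha_0(x_\alpha,x_\beta)=:h_\delta(\alpha)$ via (\ref{eqq6}) and (\ref{eqq7}). Setting $\alpha=0$ in (\ref{eqq6}) produces the pivotal identity
\[
(\delta+3b)h_\delta(\beta)=(\beta+b)h_\delta(0)+\frac{F_1(\delta)}{b}\left[\varphi(\beta)(\delta+3b)-\varphi(\delta)(\beta+b)\right].
\]
If $\delta\neq -3b$, this determines $h_\delta$, antisymmetry pins down $h_\delta(0)=-F_1(\delta)\varphi(\delta)/(\delta+2b)$ (legal because $-2b\notin\Delta$), and the coboundary from Proposition \ref{t1}(3) with $\Phi(x_\delta)=F_1(\delta)/(\delta+2b)$ cancels both $F_1(\delta)$ and $h_\delta$ in that grade after a direct computation. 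If $\delta=-3b$ (which requires $3b\in\Delta$), the left side vanishes and one instead obtains the almost-homomorphism relation $h(\alpha+\beta)=h(\alpha)+h(\beta)-F_1(-3b)\varphi(-3b)/b$ (for $\beta\neq -b$). Substituting this into (\ref{eqq7}), after expanding the difference of Lie-bracket structure constants under $\alpha+\beta+\gamma=-3b$, produces a factor of $\varphi(3b)$ that cleanly separates the two sub-cases: when $\varphi(3b)\neq 0$, an explicit formula for $h$ in terms of $F_1(-3b)$ emerges and the coboundary $\Phi(x_{-3b})=-F_1(-3b)/b$ annihilates everything, so no nontrivial class survives; when $\varphi(3b)=0$ the correction vanishes, $h$ becomes a genuine group homomorphism, and antisymmetry forces $h(3b)=0$.

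For the equivalence claim I apply Proposition \ref{t1}(3) to the canonical representative $CL(b,\varphi)(g)$. For $a=x_\alpha$ and $b=x_{-3b-\alpha}$ one computes $a\ast b=-b\,x_{-3b}$ and, using $\varphi(3b)=0$, $[b,a]=-\varphi(\alpha)x_{-3b}$. Hence the coboundary $\lambda\Phi(a\ast b)+\Phi([b,a])$ with $k=\Phi(x_{-3b})$ contributes $-bk\lambda\mathfrak{c}-k\varphi(\alpha)\mathfrak{c}$, so comparing the constant-in-$\lambda$ parts yields $g'=g-k\varphi$ up to equivalence. The principal technical obstacle will be the sub-case $\delta=-3b$ with $\varphi(3b)\neq 0$: the explicit formula for $h$ extracted from (\ref{eqq7}) is a rational function of $\beta$ built from $\varphi(\beta),\varphi(3b),\beta,b$, and the precise cancellation of this expression against the $h$-change produced by the coboundary $\Phi(x_{-3b})=-F_1(-3b)/b$ is the linchpin of the no-nontrivial-extension claim, demanding careful bookkeeping.
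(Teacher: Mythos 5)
Your proposal is correct and follows essentially the same route as the paper's proof: starting from Corollary \ref{co1}, it eliminates $\alpha_3$, $\alpha_2$, $\alpha_1$ in turn, shows the surviving $\alpha_0$ can only be supported in total degree $-3b$ (so $3b\in\Delta$ is needed), extracts additivity of the degree $-3b$ component from (\ref{eqq6}), and settles both the $\varphi(3b)$ dichotomy and the equivalence $g\sim g'+k\varphi$ through the coboundaries of Proposition \ref{t1}(3). Your small deviations check out and are harmless: killing $\alpha_2$ by the antisymmetry in (\ref{eqqq1}) rather than by (\ref{eqq3}), decomposing the cocycle degreewise (which is legitimate since the constraints and coboundaries are homogeneous, even though the phrase that $\mathfrak{c}$ sits in a single degree is better stated as decomposing the cocycle itself), and carrying $F_1(-3b)$ through the degree $-3b$ analysis so that the case $\varphi(3b)\neq 0$ is exhibited explicitly as the coboundary with $\Phi(x_{-3b})=-F_1(-3b)/b$ -- this last point merely makes explicit what the paper compresses into the assertion that (\ref{f5}) holds after normalizing $\alpha_1=0$.
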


\begin{proof}
By Corollary \ref{co1}, we only need to determine $\alpha_i(\cdot,\cdot):V\times V\rightarrow \mathbb{C}$ satisfying (\ref{eqqq1})-(\ref{eqq7}).

Letting $a=x_\alpha$, $c=x_\beta$ and $b=x_\gamma$ in (\ref{eqqe1}), we can directly obtain
$(\gamma-\alpha)\alpha_3(x_{\alpha+\gamma},x_\beta)=0$. According to that $\Delta$ is an infinite additive subgroup of $\mathbb{C}$,
for any $\alpha^{'}\in \Delta$, we can choose different $\alpha$, $\gamma\in \Delta$ such that $\alpha+\gamma=\alpha^{'}$. Therefore, we have
$\alpha_3(x_{\alpha^{'}},x_\beta)=0$ for any $\alpha^{'}$ and $\beta\in \Delta$.

Similarly, by (\ref{eqq2}) and (\ref{eqq3}), one can get
\begin{eqnarray}
\label{f1}(\gamma+b)(\alpha_2(x_\alpha,x_{\beta+\gamma})-\alpha_2(x_{\alpha+\gamma},x_\beta))=0,\\
\label{f2}(\beta+\gamma+2b)\alpha_2(x_\alpha,x_{\beta+\gamma})=(2\gamma-\alpha+b)\alpha_2(x_{\alpha+\gamma},x_\beta).
\end{eqnarray}
Since $b\notin \Delta$, $\gamma+b\notin \Delta$. Therefore, $\gamma+b\neq 0$. By (\ref{f1}), we obtain
$\alpha_2(x_\alpha,x_{\beta+\gamma})=\alpha_2(x_{\alpha+\gamma},x_\beta)$. Thus, applying it to (\ref{f2}),
we get $(\alpha+\beta-\gamma+b)\alpha_2(x_{\alpha+\gamma},x_\beta)=0$. Since $\alpha+\beta-\gamma+b\neq 0$,
we can immediately have $\alpha_2(x_\alpha,x_\beta)=0$ for any $\alpha$, $\beta\in \Delta$.

By (\ref{eqq4}), we have $(\gamma+b)(\alpha_1(x_\alpha,x_{\beta+\gamma})-\alpha_1(x_{\alpha+\gamma},x_\beta))=0$.
Therefore,
\begin{eqnarray}
\label{f3}\alpha_1(x_\alpha,x_{\beta+\gamma})=\alpha_1(x_{\alpha+\gamma},x_\beta).
\end{eqnarray}
Letting $\alpha=0$ in (\ref{f3}), one can obtain $\alpha_1(x_{\gamma},x_\beta)=\alpha_1(x_0,x_{\beta+\gamma})$. Therefore, we can assume that $\alpha_1(x_{\alpha},x_\beta)=f(\alpha+\beta)$ where $f$ is a map from $\Delta$ to $\mathbb{C}$. Taking it into (\ref{eqq5}), (\ref{eqq5}) naturally holds. Therefore, $\alpha_1(x_{\alpha},x_\beta)=f(\alpha+\beta)$ for any
$\alpha$, $\beta\in \Delta$. By Proposition \ref{t1}, define $\Phi: V\rightarrow \mathbb{C}$ by
$\Phi(x_\alpha)=\frac{f(\alpha)}{\alpha+2b}$, which allows us to assume that $\alpha_1(\cdot,\cdot)$ be zero.

According to (\ref{eqq6}) and (\ref{eqq7}), we get
\begin{gather}
\label{f4}(\gamma+b)(\alpha_0(x_\alpha,x_{\beta+\gamma})-\alpha_0(x_{\alpha+\gamma},x_\beta))-(\alpha+\beta+2b)\alpha_0(x_\gamma,x_{\alpha+\beta})
=0,\\
(\varphi(\gamma)\beta-\varphi(\beta)\gamma+b(\varphi(\gamma)-\varphi(\beta)))\alpha_0(x_\alpha,x_{\beta+\gamma})\nonumber\\
-(\varphi(\alpha)\beta-\varphi(\beta)\alpha+b(\varphi(\alpha)-\varphi(\beta)))\alpha_0(x_\gamma,x_{\beta+\alpha})\nonumber\\
\label{f5}=(\varphi(\alpha)\gamma-\varphi(\gamma)\alpha+b(\varphi(\alpha)-\varphi(\gamma)))\alpha_0(x_{\alpha+\gamma},x_{\beta}).
\end{gather}

Letting $\gamma=0$ in (\ref{f4}),we have $(\alpha+\beta+2b)\alpha_0(x_0,x_{\alpha+\beta})=0$. Since $2b\notin \Delta$, $\alpha_0(x_0,x_{\alpha})=0$ for any $\alpha\in\Delta$. Setting $\alpha=0$ in (\ref{f4}), we obtain
$(\beta+\gamma+3b)\alpha_0(x_\gamma,x_\beta)=(\gamma+b)\alpha_0(x_0,x_{\beta+\gamma})=0$. Therefore, if
$3b\in \Delta$, due to $\alpha_0(x_\alpha,x_\beta)=-\alpha_0(x_\beta,x_\alpha)$, $\alpha_0(x_\gamma,x_\beta)=\delta_{\beta+\gamma,-3b}g(\beta)$ where $g(x)$ is a complex function on $\Delta$ , $g(\beta)=-g(-3b-\beta)$ and $g(0)=g(-3b)=0$; if $3b\notin \Delta$, $\alpha_0(x_\gamma,x_\beta)=0$. Then if $\alpha+\beta+\gamma=-3b$, it can be directly obtained from (\ref{f4}) that $g(\alpha)+g(\beta)+g(\gamma)=0$. Therefore, $g(\alpha)+g(\beta)+g(\gamma)=g(-3b-\beta-\gamma)+g(\beta)+g(\gamma)=-g(\beta+\gamma)+g(\beta)+g(\gamma)=0$ for any $\beta$, $\gamma\in \Delta$.
Therefore, in this case, if $3b\in \Delta$, then $\alpha_0(x_\gamma,x_\beta)=\delta_{\beta+\gamma,-3b}g(\beta)$, where $g:\Delta \rightarrow \mathbb{C}^{+}$ is a  group homomorphism  and $g(3b)=0$; if $3b\notin \Delta$, we get $\alpha_0(x_\alpha,x_\beta)=0$. Then (\ref{f5}) holds. By (3) in Proposition \ref{t1}, $g(\alpha)$ is equivalent to $g^{'}(\alpha)$ if and only if there exists some $k\in \mathbb{C}$ such that $g=g^{'}+k\varphi$.

Therefore, this theorem follows by the above discussion.
\end{proof}
\begin{cor}
When $\varphi(3b)= 0$, for some given group homomorphism $g:\Delta \rightarrow \mathbb{C}^{+}$ satisfying $g(3b)=0$,
there are the following  central extension of $\text{Coeff}(CL(b,\varphi))$ by a one-dimensional center $\mathbb{C}\mathfrak{c}_{-1}$ with the following non-trivial Lie brackets
\begin{gather}
[x_{\alpha,i},x_{\beta,j}]=(i(\beta+b)-j(\alpha+b))x_{\alpha+\beta,i+j-1}\nonumber\\
+\frac{1}{b}(\varphi(\alpha)\beta-\varphi(\beta)\alpha+b(\varphi(\alpha)-\varphi(\beta)))x_{\alpha+\beta,i+j},\\
 \text{if $\alpha+\beta\neq -3b$ and }\nonumber\\
 [x_{\alpha,i},x_{-3b-\alpha,j}]=(i(-2b-\alpha)-j(\alpha+b))x_{-3b,i+j-1}-\varphi(\alpha))x_{-3b,i+j}
+g(\alpha)\delta_{i+j,-1}\mathfrak{c}_{-1}.
\end{gather}
\end{cor}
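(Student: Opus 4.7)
The plan is to derive this corollary as a direct consequence of Theorem \ref{t2} together with the general construction that turns a central extension of a Lie conformal algebra into a central extension of its coefficient algebra. Theorem \ref{t2} supplies, for each group homomorphism $g:\Delta\to\mathbb{C}^+$ with $g(3b)=0$, a Lie conformal algebra $\widehat{CL(b,\varphi)}=CL(b,\varphi)\oplus\mathbb{C}\mathfrak{c}$ with the indicated $\lambda$-brackets; applying $\mathrm{Coeff}$ to the short exact sequence $0\to\mathbb{C}\mathfrak{c}\to\widehat{CL(b,\varphi)}\to CL(b,\varphi)\to 0$ therefore produces a one-dimensional central extension of $\mathrm{Coeff}(CL(b,\varphi))$, and what remains is to evaluate the resulting Lie brackets explicitly via (\ref{106}).

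The first step is to pin down the image of $\mathbb{C}\mathfrak{c}$ in the coefficient algebra. The defining relation $(\partial a)_n+na_{n-1}=0$, specialised to $a=\mathfrak{c}$ and combined with $\partial\mathfrak{c}=0$, forces $n\mathfrak{c}_{n-1}=0$ for every $n\in\mathbb{Z}$, so $\mathfrak{c}_m=0$ unless $m=-1$. Thus $\mathbb{C}\mathfrak{c}$ descends to a single nonzero element $\mathfrak{c}_{-1}$ in $\mathrm{Coeff}(\widehat{CL(b,\varphi)})$, and its centrality follows from $[\mathbb{C}\mathfrak{c}\,{}_\lambda\,\widehat{CL(b,\varphi)}]=0$.

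For the brackets, the case $\alpha+\beta\neq-3b$ is immediate: the cocycle contribution vanishes, so (\ref{106}) reproduces the bracket of $\mathrm{Coeff}(CL(b,\varphi))$ stated at the end of Section 2. For $\beta=-3b-\alpha$, I would read off the $n$-th products from
\[
\widetilde{[{x_\alpha}_\lambda x_{-3b-\alpha}]}=\bigl((\alpha+b)\partial x_{-3b}-\varphi(\alpha)x_{-3b}+g(\alpha)\mathfrak{c}\bigr)+\lambda(-b\,x_{-3b}),
\]
giving $(x_\alpha)_{(0)}x_{-3b-\alpha}=(\alpha+b)\partial x_{-3b}-\varphi(\alpha)x_{-3b}+g(\alpha)\mathfrak{c}$, $(x_\alpha)_{(1)}x_{-3b-\alpha}=-b\,x_{-3b}$, and nothing higher. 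Substituting into (\ref{106}), and then using $(\partial x_{-3b})_{i+j}=-(i+j)x_{-3b,i+j-1}$ together with $\mathfrak{c}_{i+j}=\delta_{i+j,-1}\mathfrak{c}_{-1}$, the coefficient of $x_{-3b,i+j-1}$ collapses to $-(\alpha+b)(i+j)-ib=i(-2b-\alpha)-j(\alpha+b)$, which is exactly the claimed formula. I do not anticipate a real obstacle here; the only point requiring care is combining the $(\partial x_{-3b})_{i+j}$ term from the $0$-th product with the $-b\,x_{-3b}$ term coming from the $\lambda$-linear part, so that the sign bookkeeping produces the stated coefficient.
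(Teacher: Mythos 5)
Your proposal is correct and follows essentially the same route as the paper, which simply invokes Theorem \ref{t2} together with the passage from a Lie conformal algebra to its coefficient algebra; you merely carry out explicitly the computation the paper leaves implicit. The key details you supply (that $\mathfrak{c}_m=0$ unless $m=-1$, and the evaluation of (\ref{106}) on the $0$-th and $1$-st products, using $(\partial x_{-3b})_{i+j}=-(i+j)x_{-3b,i+j-1}$) are all accurate and yield exactly the stated brackets.
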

\begin{proof}
This corollary can be directly obtained by Theorem \ref{t2} and the relation between Lie conformal algebras and their coefficient algebras.
\end{proof}

\section{Conformal derivations}
In this section, we study the conformal derivations of $CL(b,\varphi)$.

\begin{defi}
A \emph{conformal linear map} between $\mathbb{C}[\partial]$-modules $U$ and $V$ is a linear map
$\phi_{\lambda}: U\rightarrow V[\lambda]$ such that
\begin{eqnarray}
\phi_\lambda(\partial u)=(\partial+\lambda)\phi_\lambda u, ~~\text{for~~all~~$u\in U$.}
\end{eqnarray}
\end{defi}

\begin{defi}
Let $R$ be a Lie conformal algebra. A conformal linear map $d_\lambda: R\rightarrow R[\lambda]$ is
called a \emph{conformal derivation} of $R$ if
\begin{eqnarray}\label{d1}
d_\lambda[a_\mu b]=[(d_\lambda a)_{\lambda+\mu} b]+[a_\mu(d_\lambda b)], ~~\text{ $a$, $b\in R$.}
\end{eqnarray}
\end{defi}

The space of all conformal derivations of $R$ is denoted by $\text{CDer}(R)$. For any $a\in R$, there is a natural conformal linear map $\text{ad}(a)_\lambda: R\rightarrow R[\lambda]$ such that $$(\text{ad} ~(a))_\lambda b=[a_\lambda b],~~~b\in R.$$
All conformal derivations of this kind are called \emph{inner}. The space of all inner conformal derivations is denoted by
$\text{CInn}(R)$.

Set $D_\lambda \in \text{CDer}(CL(b,\varphi))$. Define $D^\alpha_\lambda(x_\beta)=\pi_{\alpha+\beta}(D_\lambda(x_\beta))$, where
$\pi_\alpha$ is the natural projection from $\mathbb{C}[\lambda]\otimes CL(b,\varphi)\cong \oplus_{\beta\in \Delta}\mathbb{C}[\lambda,\partial]x_\beta$ onto $\mathbb{C}[\lambda,\partial]x_\alpha$.
Then $D^\alpha_\lambda$ is a conformal derivation and $D_\lambda=\sum_{\alpha\in \Delta}D^\alpha_\lambda$ in the sense that for any $y\in CL(b,\varphi)$, there  are only finitely many $D_\lambda^\alpha(y)\neq 0$.

\begin{lem}\label{lt1}
For any $\alpha\in\Delta$, $D^\alpha_\lambda$ is an inner conformal derivation of the form $D=ad(g(\partial)x_\alpha)_\lambda$ for some $g(\partial)\in \mathbb{C}[\partial]$.
\end{lem}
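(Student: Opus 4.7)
The plan is to parametrize $D^\alpha_\lambda$ by a single one-variable polynomial and then identify that polynomial as the $g(\partial)$ of the desired inner derivation. Since $D^\alpha_\lambda$ shifts the $\Delta$-grading by $\alpha$, write
\[
D^\alpha_\lambda(x_\beta)=f_{\alpha,\beta}(\lambda,\partial)\,x_{\alpha+\beta}, \qquad f_{\alpha,\beta}\in\mathbb{C}[\lambda,\partial].
\]
Abbreviate $c_{\alpha,\beta}:=\frac{1}{b}(\varphi(\alpha)\beta-\varphi(\beta)\alpha+b(\varphi(\alpha)-\varphi(\beta)))$, so that $[{x_\alpha}_\lambda x_\beta]=((\alpha+b)\partial+(\alpha+\beta+2b)\lambda+c_{\alpha,\beta})x_{\alpha+\beta}$, with the special values $c_{0,\beta}=-\varphi(\beta)$ and $c_{\alpha,0}=\varphi(\alpha)$. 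Applying the conformal derivation axiom (\ref{d1}) to the pair $(x_0,x_\beta)$ gives an identity in $\lambda,\mu,\partial$; setting $\mu=0$ and using the identities $[(P(\partial)a)_\lambda b]=P(-\lambda)[a_\lambda b]$, $[a_\lambda (P(\partial)b)]=P(\lambda+\partial)[a_\lambda b]$ together with $\varphi(\alpha+\beta)-\varphi(\beta)=\varphi(\alpha)$, the $f_{\alpha,\beta}(\lambda,\partial+\mu)$-term partially cancels with the left-hand side and one obtains the key recurrence
\begin{equation}\label{lt1-keyrec}
(b\lambda+\varphi(\alpha))\,f_{\alpha,\beta}(\lambda,\partial)=f_{\alpha,0}(\lambda,-\lambda)\,\bigl((\alpha+b)\partial+(\alpha+\beta+2b)\lambda+c_{\alpha,\beta}\bigr).
\end{equation}

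Next, specialize (\ref{lt1-keyrec}) to $\beta=0$ and evaluate at $\lambda=-\varphi(\alpha)/b$: the left-hand side vanishes, while the right-hand side reduces to $f_{\alpha,0}(-\varphi(\alpha)/b,\varphi(\alpha)/b)\cdot(\alpha+b)(\partial-\varphi(\alpha)/b)$. Since $b\notin\Delta$ forces $\alpha+b\neq 0$ and $\partial$ is a free variable, the scalar $f_{\alpha,0}(-\varphi(\alpha)/b,\varphi(\alpha)/b)$ must vanish, so $f_{\alpha,0}(\lambda,-\lambda)$ vanishes at $\lambda=-\varphi(\alpha)/b$ and is therefore divisible by $b\lambda+\varphi(\alpha)$ in $\mathbb{C}[\lambda]$. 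Set $\tilde g(\lambda):=f_{\alpha,0}(\lambda,-\lambda)/(b\lambda+\varphi(\alpha))\in\mathbb{C}[\lambda]$; plugging this into (\ref{lt1-keyrec}) gives
\[
f_{\alpha,\beta}(\lambda,\partial)=\tilde g(\lambda)\bigl((\alpha+b)\partial+(\alpha+\beta+2b)\lambda+c_{\alpha,\beta}\bigr),
\]
which matches $(\ad(\tilde g(-\partial)x_\alpha))_\lambda x_\beta=\tilde g(\lambda)[{x_\alpha}_\lambda x_\beta]$ exactly. Hence $D^\alpha_\lambda=(\ad(g(\partial)x_\alpha))_\lambda$ is inner with $g(\partial):=\tilde g(-\partial)$.

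The only delicate point is the divisibility extraction: vanishing of the univariate polynomial $f_{\alpha,0}(\lambda,-\lambda)$ at the single value $\lambda=-\varphi(\alpha)/b$ is forced precisely because the $\partial$-dependent factor $(\alpha+b)\partial-\varphi(\alpha)(\alpha+b)/b$ on the right of the evaluated recurrence is non-constant in $\partial$, which relies crucially on the hypothesis $b\notin\Delta$. Everything else reduces to routine bookkeeping with the explicit bracket formula and conformal sesquilinearity; notably the derivation axiom need only be tested on pairs involving $x_0$, so no Jacobi-type interplay among three different homogeneous generators is required.
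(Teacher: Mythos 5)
Your proposal is correct and follows essentially the same route as the paper: apply $D^\alpha_\lambda$ to $[{x_0}_\mu x_\beta]$, set $\mu=0$ to get the recurrence $(b\lambda+\varphi(\alpha))f_{\alpha,\beta}(\lambda,\partial)=f_{\alpha,0}(\lambda,-\lambda)\bigl((\alpha+b)\partial+(\alpha+\beta+2b)\lambda+c_{\alpha,\beta}\bigr)$, extract the factor $b\lambda+\varphi(\alpha)$ from $f_{\alpha,0}(\lambda,-\lambda)$, and identify the quotient as the inner-derivation polynomial. The only (harmless) difference is the justification of divisibility: you evaluate the $\beta=0$ recurrence at the root $\lambda=-\varphi(\alpha)/b$, whereas the paper argues that $b\lambda+\varphi(\alpha)$ cannot divide the $\partial$-dependent linear factor (using $\alpha+b\neq 0$), both of which are valid.
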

\begin{proof}
Set $D_\lambda^\alpha(x_\beta)=f_\beta(\lambda,\partial)x_{\alpha+\beta}$ where $f_\beta(\lambda,\partial)\in \mathbb{C}[\lambda,\partial]$. Applying $D_\lambda^\alpha$ to
$[{x_0}_\mu x_\beta]=(b\partial+(\beta+2b)\mu-\varphi(\beta))x_\beta$,
we can get
\begin{gather}
(b(\lambda+\partial)+(\beta+2b)\mu-\varphi(\beta))f_\beta(\lambda,\partial)
=f_0(\lambda,-\lambda-\mu)((\alpha+b)\partial+(\alpha+\beta+2b)(\lambda+\mu)\nonumber\\
+\frac{1}{b}(\varphi(\alpha)\beta-\varphi(\beta)\alpha
\label{o1}+b(\varphi(\alpha)-\varphi(\beta)))+
f_\beta(\lambda,\mu+\partial)(b\partial+(\alpha+\beta+2b)\mu-\varphi(\alpha+\beta)).
\end{gather}
Setting $\mu=0$ in (\ref{o1}), we can obtain
\begin{gather}
(b\lambda+\varphi(\alpha))f_\beta(\lambda,\partial)=f_0(\lambda,-\lambda)
((\alpha+b)\partial+(\alpha+\beta+2b)\lambda\nonumber\\
+\frac{1}{b}(\varphi(\alpha)\beta-\varphi(\beta)\alpha+b(\varphi(\alpha)-\varphi(\beta)))).
\end{gather}
Note that $b\lambda+\varphi(\alpha)$ cannot divide $(\alpha+b)\partial+(\alpha+\beta+2b)\lambda
+\frac{1}{b}(\varphi(\alpha)\beta-\varphi(\beta)\alpha+b(\varphi(\alpha)-\varphi(\beta)))$.
Therefore, $b\lambda+\varphi(\alpha)$ can divide $f_0(\lambda,-\lambda)$.
Set $g(\lambda)=\frac{f_0(\lambda,-\lambda)}{b\lambda+\varphi(\alpha)}$. Then
$f_\beta(\lambda,\partial)=g(\lambda)((\alpha+b)\partial+(\alpha+\beta+2b)\lambda
+\frac{1}{b}(\varphi(\alpha)\beta-\varphi(\beta)\alpha+b(\varphi(\alpha)-\varphi(\beta))))$.
Therefore, $D_\lambda^\alpha=ad(g(-\partial)x_\alpha)_\lambda$.
\end{proof}
\begin{thm}
$\text{CDer}(CL(b,\varphi))=\text{CInn}(CL(b,\varphi))$, i.e. any conformal derivation of $CL(b,\varphi)$ is inner.
\end{thm}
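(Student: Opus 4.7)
The plan is to leverage Lemma~\ref{lt1} together with the local-finiteness remark that precedes it, namely that $D_\lambda = \sum_{\alpha\in\Delta} D_\lambda^\alpha$ with $D_\lambda^\alpha(y)=0$ for all but finitely many $\alpha$ for each fixed $y\in CL(b,\varphi)$. By Lemma~\ref{lt1}, for every $\alpha\in\Delta$ there is a (possibly zero) polynomial $g_\alpha(\partial)\in\mathbb{C}[\partial]$ such that $D_\lambda^\alpha=\mathrm{ad}(g_\alpha(-\partial)x_\alpha)_\lambda$. The task then reduces to showing that only finitely many $g_\alpha$ are nonzero; once this is established, the element $a:=\sum_{\alpha\in\Delta}g_\alpha(-\partial)x_\alpha$ is a well-defined element of $CL(b,\varphi)$, and $D_\lambda=\mathrm{ad}(a)_\lambda\in\mathrm{CInn}(CL(b,\varphi))$.

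To carry this out I would test $D_\lambda$ against a single element, most conveniently $x_0$. Expanding,
\begin{equation*}
D_\lambda(x_0)=\sum_{\alpha\in\Delta}g_\alpha(-\partial)[{x_\alpha}_\lambda x_0]
=\sum_{\alpha\in\Delta}g_\alpha(-\partial)\bigl((\alpha+b)\partial+(\alpha+2b)\lambda-\varphi(\alpha)\bigr)x_{\alpha}.
\end{equation*}
The hypothesis $2b\notin\Delta$ implies that $b\notin\Delta$ (otherwise $2b=b+b\in\Delta$), so $\alpha+b\neq 0$ for every $\alpha\in\Delta$; in particular the polynomial $(\alpha+b)\partial+(\alpha+2b)\lambda-\varphi(\alpha)$ is nonzero. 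Therefore the $\alpha$-summand $g_\alpha(-\partial)[{x_\alpha}_\lambda x_0]\in\mathbb{C}[\lambda,\partial]x_\alpha$ vanishes if and only if $g_\alpha=0$, and the nonzero summands lie in pairwise distinct direct summands $\mathbb{C}[\partial]x_\alpha$ of $CL(b,\varphi)$.

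Since $D_\lambda(x_0)$ is a polynomial in $\lambda$ with coefficients in $CL(b,\varphi)=\bigoplus_{\alpha\in\Delta}\mathbb{C}[\partial]x_\alpha$, only finitely many of its graded components can be nonzero. Combined with the previous paragraph, this forces $\{\alpha\in\Delta : g_\alpha\neq 0\}$ to be finite, which is precisely what is needed. The only subtlety in the whole argument is this finiteness step, and it is precisely where the structural hypothesis $2b\notin\Delta$ (already used to make $CL(b,\varphi)$ well-defined and simple) pulls its weight, guaranteeing that the test bracket $[{x_\alpha}_\lambda x_0]$ cannot vanish for any $\alpha\in\Delta$.
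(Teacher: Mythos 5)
Your proof is correct and follows essentially the same route as the paper: decompose $D_\lambda$ into graded components, apply Lemma \ref{lt1} to each, and test against $x_0$, where the nonvanishing of the coefficient polynomial (since $\alpha+b\neq 0$, as $b\notin\Delta$) forces all but finitely many $g_\alpha$ to vanish, so that $D_\lambda=\mathrm{ad}(a)_\lambda$ for the resulting finite sum $a$. The only blemishes are cosmetic and immaterial: the bracket $[{x_\alpha}_\lambda x_0]$ carries $+\varphi(\alpha)$ rather than $-\varphi(\alpha)$, and by sesquilinearity the scalar factor in each summand is $g_\alpha$ evaluated at $\pm\lambda$ rather than the operator $g_\alpha(-\partial)$ written outside the bracket.
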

\begin{proof}
By Lemma \ref{lt1}, we can get $D_\lambda=\sum_{\alpha\in \Delta}D^{\alpha}_\lambda
=\sum_{\alpha\in \Delta}ad(g_\alpha(\partial)x_\alpha)_\lambda$ for some $g_\alpha(\partial)\in\mathbb{C}[\partial]$.
If there are infinite many $\alpha$ such that $g_\alpha(\partial)\neq 0$, then
$D_\lambda(x_0)=\sum_{\alpha\in \Delta}g_\alpha(-\lambda)((\alpha+b)\partial+(\alpha+2b)\lambda+\varphi(\alpha))x_\alpha$
is an infinite sum. It contradicts with the definition of conformal derivation. Therefore, $D_\lambda=\sum_{\alpha\in \Delta}D^{\alpha}_\lambda
=\sum_{\alpha\in \Delta}ad(g_\alpha(\partial)x_\alpha)_\lambda$ is a finite sum. Set $g=\sum_{\alpha\in \Delta}g_\alpha(\partial)x_\alpha$. Then, $D_\lambda=ad(g)_\lambda$. Therefore, this theorem holds.
\end{proof}
\section{Finite conformal modules}
Suppose that $V$ is a finitely $\mathbb{C}[\partial]$-generated nontrivial $CL(b,\varphi)$-module.
Since $\mathbb{C}[\partial]$ is a principle ideal domain, $V$ can be decomposed as a sum of
a free $\mathbb{C}[\partial]$-module and a torsion $\mathbb{C}[\partial]$-module. According to the fact that
a torsion $\mathbb{C}[\partial]$-module must be a trivial  $CL(b,\varphi)$-module,
we can assume that $V$ is a free $\mathbb{C}[\partial]$-module.

According to that $\mathbb{C}[\partial]\frac{x_0}{b}$ is the Virasoro Lie conformal algebra, $V$ can be seen as a module over
$Vir$. By Theorem 3.2(1) in \cite{CK1}, we can give a composition series as follows
\begin{eqnarray}
V=V_m\supset V_{m-1}\supset\cdots\supset V_1\supset V_0,
\end{eqnarray}
where the composition factor $\overline{V_i}=V_i/V_{i-1}$, $i\geq 1$ is either a rank one free module $M_{\gamma_i,\alpha_i}$ with $\gamma_i\neq 0$ or a one-dimensional trivial module $\mathbb{C}_{\alpha_i}$ with trivial $\lambda$-action and scalar $\partial$-action by $\alpha_i$, and $V_0$ is a trivial $Vir$-module. We denote a $\mathbb{C}[\partial]$-generator of $\overline{V_i}$ by $\overline{v_i}$, and
the preimage of $\overline{v_i}$ by $v_i\in V_i$ and the $\mathbb{C}[\partial]$-generators of $V_0$ are $w_1$, $\cdots$, $w_r$. Therefore, $\{w_i| 1\leq i\leq r\} \cup \{v_i| 1\leq i\leq m\}$ forms a $\mathbb{C}[\partial]$-generating set of $V$ and the $\lambda$-action of $x_0$ on $w_i$ is trivial and on $v_i$ is a
$\mathbb{C}[\lambda,\partial]$-combination of $w_1$, $\cdots$, $w_r$, $v_1$, $\cdots$, $v_i$.
\begin{lem}\label{llem1}
For any nonzero number $\alpha\in \Delta$ and all $i\gg 0$, the $\lambda$-actions of $x_{i\alpha}$ on
$w_j$ are trivial for any $j\in \{1,\cdots,r\}$.
\end{lem}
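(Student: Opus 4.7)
The plan is to apply the Jacobi identity to $x_0$, $x_{i\alpha}$, $w_j$ and then compare degrees in $\mu$ on both sides of the resulting relation. Using the hypothesis $(x_0)_\lambda w_j = 0$, the Jacobi identity reduces to
\[
(x_0)_\lambda\bigl((x_{i\alpha})_\mu w_j\bigr) = \bigl((i\alpha+b)\lambda - b\mu - i\varphi(\alpha)\bigr)(x_{i\alpha})_{\lambda+\mu} w_j,
\]
where the right-hand side is obtained from $[x_0{}_\lambda x_{i\alpha}] = (b\partial + (i\alpha+2b)\lambda - i\varphi(\alpha))x_{i\alpha}$ together with the sesquilinearity rule $(\partial a)_\nu = -\nu\, a_\nu$, which turns the $b\partial$-factor into $-b(\lambda+\mu)$.

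Next, suppose for contradiction that $(x_{i\alpha})_\mu w_j$ is a nonzero polynomial in $\mu$ of degree $d\geq 0$, with leading coefficient $f_d\in V\setminus\{0\}$. The left-hand side has $\mu$-degree at most $d$, since $(x_0)_\lambda$ is $\mathbb{C}[\mu]$-linear and acts coefficient-wise on the polynomial $(x_{i\alpha})_\mu w_j$. The right-hand side, however, has $\mu$-degree $d+1$: writing $(x_{i\alpha})_{\lambda+\mu} w_j = \sum_{k=0}^d f_k (\lambda+\mu)^k$, the $\mu^d$-coefficient equals $f_d$, so multiplication by $-b\mu$ produces $-bf_d$ as the $\mu^{d+1}$-coefficient on the right. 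Equating this with the (vanishing) coefficient on the left and using $b\neq 0$ (which holds since $b\notin\Delta$ while $0\in\Delta$), I obtain $f_d=0$, contradicting the choice of $d$. Hence $(x_{i\alpha})_\mu w_j = 0$.

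I anticipate no real obstacle here: the proof reduces to a direct polynomial-degree comparison whose only nontrivial input is $b\neq 0$, ensuring that the $-b\mu$ term on the right-hand side is genuinely present. In fact, the argument nowhere uses that $\alpha\neq 0$ or that $i$ is large, so it proves the stronger statement that $(x_\beta)_\mu w_j = 0$ for \emph{every} $\beta\in\Delta$. For a proof more in keeping with the ``$i\gg 0$'' phrasing, one can instead set $\lambda=0$ in the Jacobi relation to see that $(x_{i\alpha})_\mu w_j$ is a generalized eigenvector of the $\mathbb{C}[\partial]$-linear operator $(x_0)_{(0)}$ on the finite-rank free module $V$ with eigenvalue $-i\varphi(\alpha)$; since such an operator has only finitely many ``constant'' eigenvalues in $\mathbb{C}$, the case $\varphi(\alpha)\neq 0$ is handled immediately for $i\gg 0$, while the case $\varphi(\alpha)=0$ can be handled analogously using the auxiliary operator $(x_0)_{(1)}$, whose eigenvalues on $V$ form a finite set and which acts on the $\mu^k$-coefficient of $(x_{i\alpha})_\mu w_j$ by $i\alpha+b(1-k)$.
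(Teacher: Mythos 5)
Your main argument is correct, and it takes a genuinely different route from the paper. You only use the hypothesis ${x_0}_\lambda w_j=0$, the bracket $[{x_0}_\lambda x_{i\alpha}]=(b\partial+(i\alpha+2b)\lambda-i\varphi(\alpha))x_{i\alpha}$, and a comparison of $\mu$-degrees: the coefficient of $\lambda^0\mu^{d+1}$ on the right-hand side is $-bf_d$ while the left-hand side has $\mu$-degree at most $d$, and $b\neq 0$ (indeed $2b\notin\Delta$ forces $b\notin\Delta$, hence $b\neq 0$), so $f_d=0$. The paper instead runs the argument through the Virasoro composition series: it first treats the case ${x_{i\alpha}}_\lambda w_j\in V_0[\lambda]$, then looks at the top composition factor met by ${x_{i\alpha}}_\lambda w_j$, splits into the cases $M_{\gamma,\alpha}$ and $\mathbb{C}_\alpha$, and kills the coefficient polynomial $p_{ij}$ by specializing $\mu=0$ in the resulting functional equation. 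Your version is shorter, needs no case analysis, and proves the stronger statement that ${x_\beta}_\lambda w_j=0$ for every $\beta\in\Delta$ (neither $\alpha\neq 0$ nor $i\gg 0$ is used); the paper's route has the advantage of being uniform with its proof of the next lemma (for $v_1$), where your degree trick is unavailable because ${x_0}_\lambda v_1\neq 0$. One caution about your closing aside: $(x_0)_{(1)}$ is not a $\mathbb{C}[\partial]$-module endomorphism (it satisfies $[(x_0)_{(1)},\partial]=(x_0)_{(0)}$), so the claim that its eigenvalues on $V$ form a finite set is not immediate and that alternative sketch would need more justification in the case $\varphi(\alpha)=0$; this does not affect your main proof, which is complete as written.
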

\begin{proof}
First, we discuss it when ${x_{i\alpha}}_\lambda w_j\in V_0[\lambda]$.  Let ${x_0}_\mu$ act on it. We can get
${x_0}_\mu({x_{i\alpha}}_\lambda w_j)=0.$ Therefore, $[{x_0}_\mu x_{i\alpha}]_{\lambda+\mu}w_j=0$, i.e.
$(b(\mu-\lambda)+i\alpha\mu-i\varphi(\alpha)){x_{i\alpha}}_\lambda w_j=0$. It follows that ${x_{i\alpha}}_\lambda w_j=0$ for any $j$.

Next, by the above discussion, we can suppose that ${x_{i\alpha}}_\lambda w_j\neq 0$ for some fixed $i\gg 0$ and
$m_{ij}\geq 1$ be the largest integer such that 
${x_{i\alpha}}_\lambda w_j\notin V_{m_{ij}-1}[\lambda]$. We discuss it in the following two cases.

{\bf Case 1}: $\overline{V_{m_{ij}}}=M_{\gamma_{m_{ij}},\alpha_{m_{ij}}}$.
Then we have
\begin{eqnarray}
{x_{i\alpha}}_\lambda w_j=p_{ij}(\lambda,\partial)v_{m_{ij}}~~~(\text{mod}~~V_{m_{ij}-1})~~~~~\text{for some $p_{ij}(\lambda,\partial)\in
\mathbb{C}[\lambda,\partial].$}
\end{eqnarray}
Let ${x_0}_\mu$ act on it. We obtain
\begin{eqnarray}\label{wq2}
(b(\mu-\lambda)+i\alpha\mu-i\varphi(\alpha))p_{ij}(\lambda+\mu,\partial)=bp_{ij}(\lambda,\mu+\partial)(\partial+\gamma_{m_{ij}}\mu+\alpha_{m_{ij}}).
\end{eqnarray}
Setting $\mu=0$ in (\ref{wq2}), we can get $p_{ij}(\lambda,\partial)=0$.

{\bf Case 2}: $\overline{V_{m_i}}=\mathbb{C}_{\alpha_{m_i}}$. We can assume that
\begin{eqnarray}
{x_{i\alpha}}_\lambda w_j=p_{ij}(\lambda)v_{m_{ij}}~~~(\text{mod}~~V_{m_{ij}-1})~~~~~\text{for some $p_{ij}(\lambda,\partial)\in
\mathbb{C}[\lambda,\partial].$}
\end{eqnarray}
Let ${x_0}_\mu$ act on it. We get
\begin{eqnarray}
(b(\mu-\lambda)+i\alpha\mu-i\varphi(\alpha))p_{ij}(\lambda+\mu)=0.
\end{eqnarray}
Therefore, $p_{ij}(\lambda)=0$.

This lemma follows by the above discussion.
\end{proof}
\begin{lem}\label{llem2}
For any nonzero number $\alpha\in \Delta$ and all $i\gg 0$, the $\lambda$-actions of $x_{i\alpha}$ on
$v_1$ are trivial.
\end{lem}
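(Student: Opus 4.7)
Following the template of Lemma~\ref{llem1}, set $f_i(\lambda) := x_{i\alpha}{}_\lambda v_1$ and analyse $f_i$ by its position in the filtration $V_0\subset V_1\subset\cdots\subset V_m = V$. The plan splits into a base case where $f_i \in V_0[\lambda]$ and a complementary case treated via the maximal filtration level encountered by $f_i$.

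In the base case $f_i(\lambda)\in V_0[\lambda]$, the Jacobi identity
\[
x_0{}_\mu\bigl(x_{i\alpha}{}_\lambda v_1\bigr) - x_{i\alpha}{}_\lambda\bigl(x_0{}_\mu v_1\bigr) = (b(\mu-\lambda)+i\alpha\mu-i\varphi(\alpha))\,x_{i\alpha}{}_{\lambda+\mu}v_1
\]
simplifies because $x_0$ annihilates $V_0$. Decomposing $x_0{}_\mu v_1 = P(\mu,\partial)v_1 + w(\mu)$, with $P(\mu,\partial) = b(\partial+\gamma_1\mu+\alpha_1)$ if $\overline{V_1} = M_{\gamma_1,\alpha_1}$ and $P=0$ if $\overline{V_1} = \mathbb{C}_{\alpha_1}$, and with $w(\mu)\in V_0[\mu]$, Lemma~\ref{llem1} lets me drop $x_{i\alpha}{}_\lambda w(\mu)$ for $i\gg 0$. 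Specializing the resulting identity at $\mu=0$ produces a nonzero polynomial in $\partial$ (respectively in $\lambda$) annihilating $f_i$; since $V$ is free over $\mathbb{C}[\partial]$, $V_0$ is torsion-free, forcing $f_i = 0$.

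For the complementary case, assume $f_i\neq 0$ and let $m_i\geq 1$ be the largest index with $f_i(\lambda)\notin V_{m_i-1}[\lambda]$. Write $f_i(\lambda) \equiv Q_i(\lambda,\partial)\overline{v_{m_i}} \pmod{V_{m_i-1}[\lambda]}$ (with $Q_i$ depending only on $\lambda$ when $\overline{V_{m_i}} = \mathbb{C}_{\alpha_{m_i}}$). Reducing the simplified Jacobi identity modulo $V_{m_i-1}$ produces a polynomial identity on $Q_i$. In all but one combination of types for $\overline{V_1}$ and $\overline{V_{m_i}}$, setting $\mu=0$ yields an equation $(b\delta - i\varphi(\alpha))\,Q_i = 0$, possibly multiplied by a nontrivial $\partial$-dependent factor that already forces $Q_i = 0$; here $\delta \in \{\alpha_{m_i}-\alpha_1,\,-(\alpha_{m_i}+\alpha_1)\}$. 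For $\varphi(\alpha)\neq 0$, or for $\varphi(\alpha)=0$ with $\delta\neq 0$, this gives $Q_i = 0$ for $i\gg 0$, contradicting the maximality of $m_i$.

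The main obstacle is the residual degenerate sub-case $\varphi(\alpha)=0$ and $\delta=0$, which occurs when $\overline{V_1}= M_{\gamma_1,\alpha_1}$ and either $\overline{V_{m_i}} = M_{\gamma_{m_i},\alpha_1}$ or $\overline{V_{m_i}} = \mathbb{C}_{-\alpha_1}$. Here $\mu=0$ is uninformative, and I would instead extract the $\mu^1$-coefficient of the reduced identity: after the shift $\partial \mapsto \partial - \alpha_1$, one obtains the Euler-type equation $(b(1-\gamma_{m_i}+\gamma_1)+i\alpha - bE)Q_i = 0$ with $E = \lambda\partial_\lambda + \partial\partial_\partial$, forcing each nonzero homogeneous component of $Q_i$ of total degree $D$ to satisfy $i\alpha = b(D-1+\gamma_{m_i}-\gamma_1)$. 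Comparing coefficients of $\mu^s$ for $s\geq 2$ (or, alternatively, specializing $\mu = -(\partial+\alpha_1)/\gamma_{m_i}$ to kill the first term on the right-hand side) produces further linear relations among the Taylor coefficients of $Q_i$ which, together with the degree constraint, admit only the zero solution for all but finitely many $i$. This closes the contradiction and proves $f_i = 0$ for $i\gg 0$.
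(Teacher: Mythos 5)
Your overall strategy is the paper's: filter $V$, apply ${x_0}_\mu$ via the Jacobi identity, use Lemma \ref{llem1} to discard the $V_0$-contributions, reduce modulo $V_{m_i-1}$, and split into cases according to the types of $\overline{V_1}$ and $\overline{V_{m_i}}$. Your $\mu=0$ specialization disposes of the cases where $\overline{V_1}$ is trivial (the paper's Cases 2 and 4) and of the non-degenerate part of the remaining cases, and your Euler relation from the $\mu^1$-coefficient is correct: in the shifted variables it reads $bEQ_i=(i\alpha+b(1+\gamma_1-\gamma_{m_i}))Q_i$. The gap is in the last sentence, which is precisely where the paper spends most of its effort. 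The claim that the degree constraint together with the relations coming from $\mu^s$, $s\ge 2$ (or from the specialization $\mu=-(\partial+\alpha_1)/\gamma_{m_i}$) ``admit only the zero solution'' is asserted, not proved, and the two ingredients you actually name do not suffice. Indeed, the Euler relation only forces $Q_i$ to be homogeneous of total degree $D_i=\frac{i\alpha}{b}+1+\gamma_1-\gamma_{m_i}$, which can be a non-negative integer for infinitely many $i$ (e.g. $\alpha=3b\in\Delta$ and $\gamma_{m_i}=\gamma_1$), and the identity obtained from the specialization $\mu=-(\partial+\alpha_1)/\gamma_{m_i}$ is satisfied by the nonzero homogeneous polynomial
\[
Q_i(\lambda,\partial)=\prod_{j=0}^{D_i-1}\Bigl(\lambda+\tfrac{i\alpha+b(1-j)}{b\,\gamma_{m_i}}\,\partial\Bigr),
\]
as one checks by rewriting that identity as a one-variable difference equation $(bx+A)r(x-\tfrac{1}{\gamma_{m_i}})=b(x+c)r(x)$ for $r(x)=Q_i(x,1)$ and matching root multisets. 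So your two constraints are mutually consistent with a nonzero solution of exactly the prescribed degree, and eliminating it requires using the remaining coefficients of the full identity in a quantitative way.

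That elimination is the content of the paper's Case 1: from (\ref{er2}) it sets $\partial=0$, then substitutes $\partial=-\gamma_{m_i}\mu-\alpha_{m_i}$ and $\lambda=\frac{(i\alpha+b)\mu-i\varphi(\alpha)}{b}$ to get (\ref{er4}), compares top coefficients to obtain (\ref{er5}), argues that (\ref{er5}) forces $\deg p_i(\lambda,0)\le 1$ because $i\gg 0$ while $\gamma_1,\gamma_{m_i}$ range over a finite set, and then kills the remaining linear ansatz by comparing the $\mu\partial$, $\mu$, and constant coefficients. Some argument of this strength (or an equivalent one) must replace your final sentence; without it the degenerate sub-case $\overline{V_1}=M_{\gamma_1,\alpha_1}$, $\overline{V_{m_i}}=M_{\gamma_{m_i},\alpha_1}$, $\varphi(\alpha)=0$ is not closed. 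A smaller point: in your other degenerate branch $\overline{V_{m_i}}=\mathbb{C}_{-\alpha_1}$ there is no $\gamma_{m_i}$, so the Euler/substitution device does not literally apply; that branch, however, is easy to finish by comparing the top $\mu$-coefficients, which give $(b+i\alpha)$ times the leading coefficient of $Q_i$ equal to zero unless $\deg Q_i=0$, and then $(i\alpha+b(1+\gamma_1))Q_i=0$ for the constant.
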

\begin{proof}
Suppose that ${x_{i\alpha}}_\lambda v_1\neq 0$ for some fixed $i\gg 0$ and
$m_i\geq 1$ be the largest integer such that ${x_{i\alpha}}_\lambda v_1\notin V_{m_i-1}[\lambda]$.
Then we discuss it in the following cases.

{\bf Case 1}: $V_1=M_{\gamma_1,\alpha_1}$ and $\overline{V_{m_i}}=M_{\gamma_{m_i},\alpha_{m_i}}$.
Then we  get
\begin{eqnarray}\label{er1}
{x_{i\alpha}}_\lambda v_1=p_i(\lambda,\partial)v_{m_i} (\text{mod}~~V_{m_i-1})~~~~~\text{for some $p_i(\lambda,\partial)\in
\mathbb{C}[\lambda,\partial].$}
\end{eqnarray}
Let ${x_0}_\mu$ act on (\ref{er1}). We directly obtain
\begin{eqnarray}
bp_i(\lambda,\mu+\partial)(\partial+\gamma_{m_i}\mu+\alpha_{m_i})
=((i\alpha+b)\mu-b\lambda-i\varphi(\alpha))p_i(\lambda+\mu,\partial)\nonumber\\
\label{er2}+b(\partial+\lambda+\gamma_1\mu+\alpha_1)p_i(\lambda,\partial).
\end{eqnarray}
Setting $\partial=0$ in the above equality and letting $p_i(\lambda)=p_i(\lambda,0)$, one can get
\begin{eqnarray}
\label{er5}p_i(\lambda,\mu)=\frac{((i\alpha+b)\mu-b\lambda-i\varphi(\alpha))p_i(\lambda+\mu)+b(\lambda+\gamma_1\mu+\alpha_1)p_i(\lambda)}{b(\gamma_{m_i}\mu+\alpha_{m_i})}.
\end{eqnarray}
Letting $\partial=-\gamma_{m_i}\mu-\alpha_{m_i}$ and $\lambda=\frac{(i\alpha+b)\mu-i\varphi(\alpha)}{b}$ in (\ref{er2}),
we have
\begin{gather}
((\frac{i\alpha+b}{b}+\gamma_1-\gamma_{m_i})\mu-\alpha_{m_i}-\frac{i\varphi(\alpha)}{b}+\alpha_1)\nonumber\\
((i\alpha+b)((\gamma_{m_i}+1)\mu+\alpha_{m_i})p_i((\frac{i\alpha+b}{b}-\gamma_{m_i})\mu-\frac{i}{b}\varphi(\alpha)-\alpha_{m_i})\nonumber\\
-b((\frac{i\alpha+b}{b}-\gamma_1\gamma_{m_i})\mu-\frac{i}{b}\varphi(\alpha)-\gamma_1\alpha_{m_i}+\alpha_1))
p_i(\frac{i\alpha+b}{b}\mu-\frac{i}{b}\varphi(\alpha)))=0
\end{gather}
Note that when $i\gg 0$, we have $(\frac{i\alpha+b}{b}+\gamma_1-\gamma_{m_i})\neq 0$. Therefore,
\begin{eqnarray}
((i\alpha+b)((\gamma_{m_i}+1)\mu+\alpha_{m_i})p_i((\frac{i\alpha+b}{b}-\gamma_{m_i})\mu-\frac{i}{b}\varphi(\alpha)-\alpha_{m_i})\nonumber\\
\label{er4}=b((\frac{i\alpha+b}{b}-\gamma_1\gamma_{m_i})\mu-\frac{i}{b}\varphi(\alpha)-\gamma_1\alpha_{m_i}+\alpha_1)
p_i(\frac{i\alpha+b}{b}\mu-\frac{i}{b}\varphi(\alpha)).
\end{eqnarray}
Assume that the degree of $p_i(\lambda)$ is $n_i$. By comparing the coefficients of $\mu^{n_i+1}$ in (\ref{er4}), it follows that
\begin{eqnarray}
\label{er5}(i\alpha+b)(\gamma_{m_i}+1)(\frac{i\alpha+b}{b}-\gamma_{m_i})^{n_i}
=(i\alpha+b-b\gamma_1\gamma_{m_i})(\frac{i\alpha+b}{b})^{n_i}.
\end{eqnarray}
Since $i$ is sufficiently large and nonzero numbers $\gamma_1$ and $\gamma_{m_i}$ have only finitely many choices,(\ref{er5}) cannot hold if $n_i>1$. Therefore, $n_i\leq 1$. By (\ref{er5}),
we can set $p_i(\lambda,\partial)=a_{i,0}+a_{i,1}\lambda+a_{i,2}\partial$. Taking it into (\ref{er2}) and by comparing
the coefficients of $\mu\partial$, we can get
$ba_{i,2}(\gamma_{m_i}-\gamma_1-i\alpha)=0$. Since $i$ is sufficiently large and $\gamma_{m_i}$ has only finitely many choices,
$a_{i,2}=0$. Similarly, we can get $a_{i,0}=a_{i,1}=0$. Therefore, $p_i(\lambda,\partial)=0$.

{\bf Case 2}: $V_1=\mathbb{C}_{\alpha_1}$ and $\overline{V_{m_i}}=M_{\gamma_{m_i},\alpha_{m_i}}$. With the same assumption we did in (\ref{er1}), and applying ${x_0}_\mu$ into (\ref{er1}), we immediately get
\begin{eqnarray}
\label{er6}bp_i(\lambda,\mu+\partial)(\partial+\gamma_{m_i}\mu+\alpha_{m_i})
=((i\alpha+b)\mu-b\lambda-i\varphi(\alpha))p_i(\lambda+\mu,\partial).
\end{eqnarray}
Setting $\mu=\partial=0$ in (\ref{er6}), we have $p_i(\lambda,0)=0$. Therefore,
letting $\partial=0$ in (\ref{er6}), we get $p_i(\lambda,\partial)=0$.

{\bf Case 3}: $V_1=M_{\gamma_{1},\alpha_{1}}$ and $\overline{V_{m_i}}=\mathbb{C}_{\alpha_{m_i}}$.
Then we may assume that
\begin{eqnarray}\label{er7}
{x_{i\alpha}}_\lambda v_1=p_i(\lambda)v_{m_i} (\text{mod}~~V_{m_i-1})~~~~~\text{for some $p_i(\lambda)\in
\mathbb{C}[\lambda].$}
\end{eqnarray}
Letting ${x_0}_\mu$ act on (\ref{er7}), we  obtain
\begin{eqnarray}
((i\alpha+b)\mu-b\lambda-i\varphi(\alpha))p_i(\lambda+\mu)
+b(\partial+\lambda+\gamma_1\mu+\alpha_1)p_i(\lambda)=0.
\end{eqnarray}
One can directly get $p_i(\lambda)=0$ by comparing the coefficients of $\partial$.

{\bf Case 4}: $V_1=\mathbb{C}_{\alpha_{1}}$ and $\overline{V_{m_i}}=\mathbb{C}_{\alpha_{m_i}}$.
With a similar discussion as above, we can get $p_i(\lambda)=0$.

Therefore, by the discussion above, we finish the proof of this lemma.
\end{proof}

\begin{thm}
$CL(b,\varphi)$ does not have a non-trivial representation on any finite $\mathbb{C}[\partial]$-modules.
\end{thm}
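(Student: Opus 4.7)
The plan is to promote the two preceding lemmas to the statement that for every nonzero $\alpha\in\Delta$ and all sufficiently large $i$ the operator ${x_{i\alpha}}_\lambda$ annihilates the \emph{entire} module $V$, and then to exploit the Jacobi identity on $[{x_{i\alpha}}_\lambda x_{\beta-i\alpha}]$ to force every ${x_\beta}_\lambda$ to act trivially on $V$, contradicting the assumed nontriviality.

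To carry out the promotion, I would induct on $k$ to show that for any nonzero $\alpha\in\Delta$ and $i\gg 0$ one has ${x_{i\alpha}}_\lambda v_k=0$. The base cases are Lemmas \ref{llem1} and \ref{llem2}. For the inductive step, write ${x_{i\alpha}}_\lambda v_k = p_i(\lambda,\partial)v_{m_i}\pmod{V_{m_i-1}[\lambda]}$ (or with $p_i(\lambda)$ in place of $p_i(\lambda,\partial)$ when $\overline{V_{m_i}}=\mathbb{C}_{\alpha_{m_i}}$) and apply ${x_0}_\mu$. The crucial observation is that ${x_0}_\mu v_k$ equals $b(\partial+\gamma_k\mu+\alpha_k)v_k$ (or $0$) plus terms lying in $V_{k-1}[\mu]$, and the inductive hypothesis together with Lemma \ref{llem1} guarantees that those lower-order corrections are annihilated by ${x_{i\alpha}}_\lambda$ once $i$ exceeds a common threshold, which exists because there are only finitely many generators. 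The resulting identity for $p_i$ is formally the same as equation (\ref{er2}) with $\gamma_1,\alpha_1$ replaced by $\gamma_k,\alpha_k$, so the four-case degree-counting argument of Lemma \ref{llem2} carries over verbatim and forces $p_i=0$. Since $V$ is finitely $\mathbb{C}[\partial]$-generated and ${x_{i\alpha}}_\lambda(\partial v)=(\lambda+\partial){x_{i\alpha}}_\lambda v$, I obtain a uniform $N_\alpha$ with ${x_{i\alpha}}_\lambda V=0$ for all $i\ge N_\alpha$.

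With this strengthening in hand, fix $\beta\in\Delta$, a nonzero $\alpha\in\Delta$, and $i\ge N_\alpha$. Because ${x_{i\alpha}}_\lambda$ annihilates every element of $V$, including ${x_{\beta-i\alpha}}_\mu v\in V[\mu]$, the Jacobi identity collapses to
\[
0 \;=\; [{x_{i\alpha}}_\lambda x_{\beta-i\alpha}]_{\lambda+\mu}\, v \;=\; \bigl(-(i\alpha+b)(\lambda+\mu)+(\beta+2b)\lambda+c_i\bigr)\,{x_\beta}_{\lambda+\mu}\, v,
\]
where $c_i\in\mathbb{C}$ is the $\varphi$-dependent constant in the bracket formula. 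Setting $\nu=\lambda+\mu$ and viewing the identity as a polynomial in $\lambda$ over $\mathbb{C}[\nu]$ yields $(\beta+2b){x_\beta}_\nu v=0$ from the coefficient of $\lambda$; since $2b\notin\Delta$ the factor $\beta+2b$ is nonzero, so ${x_\beta}_\nu v=0$ for every $\beta\in\Delta$ and $v\in V$, contradicting nontriviality. The main obstacle I anticipate is the inductive step: one has to honestly track the lower-order corrections in ${x_0}_\mu v_k$ across all four combinations of types for $\overline{V_k}$ and $\overline{V_{m_i}}$ and check that a single threshold $i$ can be chosen uniformly over the earlier $v_j$; once that bookkeeping is done, the Jacobi collapse in the final step is essentially automatic.
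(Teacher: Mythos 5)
Your proposal is correct, and its first (and main) part coincides with the paper's argument: the paper also proves the theorem by extending Lemmas \ref{llem1} and \ref{llem2} by induction on $j$ to get ${x_{i\alpha}}_\lambda v_j=0$ for $i\gg 0$, exactly the bookkeeping you describe (your inductive identity is indeed (\ref{er2}) with $\gamma_1,\alpha_1$ replaced by $\gamma_k,\alpha_k$, and the uniform threshold exists because there are finitely many generators; just remember also the degenerate case ${x_{i\alpha}}_\lambda v_k\in V_0[\lambda]$, which is handled as in the first paragraph of Lemma \ref{llem1}). Where you diverge is the final step. The paper simply observes that the kernel of the representation is a nonzero ideal and invokes the simplicity of $CL(b,\varphi)$ (a result quoted from the Hong--Wu paper) to conclude that the whole algebra acts trivially. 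You instead make this concrete: once ${x_{i\alpha}}_\lambda V=0$ for $i\ge N_\alpha$, the Jacobi identity applied to $[{x_{i\alpha}}_\lambda x_{\beta-i\alpha}]_{\lambda+\mu}v$ collapses to $\bigl(-(i\alpha+b)(\lambda+\mu)+(\beta+2b)\lambda+c_i\bigr){x_\beta}_{\lambda+\mu}v=0$, and since $\beta+2b\neq 0$ (because $2b\notin\Delta$) and $V[\lambda,\nu]$ is torsion-free over $\mathbb{C}[\lambda,\nu]$, this forces ${x_\beta}_\nu v=0$ for every $\beta$. Your version is slightly longer but more self-contained, since it does not rely on the external simplicity result and in effect re-proves the only consequence of simplicity that is needed; the paper's version is shorter but imports simplicity as a black box. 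Either way the conclusion and the essential inductive machinery are the same.
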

\begin{proof}
By Lemmas \ref{llem1} and \ref{llem2}, we can also get ${x_{i\alpha}}_\lambda v_j=0$ by induction on $j\leq m$ for $i\gg0$ and any nonzero $\alpha\in \Delta$.
Therefore, the $\lambda$-action of $x_{i\alpha}$ is trivial. Since $CL(b,\varphi)$ is a simple Lie conformal algebra,
we can directly obtain that the $\lambda$-action of $CL(b,\varphi)$ is trivial. Then this theorem can be obtained.
\end{proof}

\section{Free intermediate series modules}
In this section, we study the free intermediate series modules of $CL(b,\varphi)$.

Set $V=\oplus_{\alpha\in \Delta}\mathbb{C}[\partial]M_\alpha$. Assume that
${x_\alpha}_\lambda M_\beta=f_{\alpha,\beta}(\lambda,\partial)M_{\alpha+\beta}$, where
$f_{\alpha,\beta}(\lambda,\partial)\in \mathbb{C}[\lambda,\partial]$.
Now we shall determine the structure constant $f_{\alpha,\beta}(\lambda,\partial)$ such that
$V$ is a free intermediate series module of $CL(b,\varphi)$.

According to
$[{x_\alpha}_\lambda x_\beta]_{\lambda+\mu}M_\gamma={x_\alpha}_\lambda({x_\beta}_\mu M_\gamma)
-{x_\beta}_\mu({x_\alpha}_\lambda M_\gamma)$, all structure constants should satisfy
\begin{gather}
((\beta+b)\lambda-(\alpha+b)\mu+\frac{\varphi(\alpha)\beta-\varphi(\beta)\alpha+b(\varphi(\alpha)-\varphi(\beta))}{b})f_{\alpha+\beta,\gamma}(\lambda+\mu,\partial)\nonumber\\
\label{i1}=f_{\beta,\gamma}(\mu,\lambda+\partial)f_{\alpha,\beta+\gamma}(\lambda,\partial)-f_{\alpha,\gamma}(\lambda,\mu+\partial)f_{\beta,\alpha+\gamma}(\mu,\partial).
\end{gather}

\begin{pro}\label{pp1}
If there exist some $\alpha_0$, $\beta_0\in \Delta$ such that $f_{\alpha_0,\beta_0}(\lambda,\partial)=0$, then
$f_{\alpha,\beta}(\lambda,\partial)=0$ for all $\alpha$, $\beta\in \Delta$.
\end{pro}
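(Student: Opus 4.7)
The plan is to use the module compatibility identity~(\ref{i1}) to propagate the single vanishing $f_{\alpha_0,\beta_0}=0$ throughout $\Delta\times\Delta$, exploiting that the coefficient polynomials on the LHS of~(\ref{i1}) are never identically zero thanks to $b,2b\notin\Delta$ (so $\alpha+b\neq 0$ for every $\alpha\in\Delta$). I will exploit three specializations of~(\ref{i1}), each using $f_{\alpha_0,\beta_0}=0$ to kill a summand. Setting $\beta=\alpha_0$, $\gamma=\beta_0$ annihilates the first RHS summand and gives
\[
C_1(\lambda,\mu)\,f_{\alpha+\alpha_0,\beta_0}(\lambda+\mu,\partial)=-f_{\alpha,\beta_0}(\lambda,\mu+\partial)\,f_{\alpha_0,\alpha+\beta_0}(\mu,\partial);
\]
setting $\alpha=\alpha_0$, $\gamma=\beta_0$ annihilates the second RHS summand and gives
\[
C_2(\lambda,\mu)\,f_{\alpha_0+\beta,\beta_0}(\lambda+\mu,\partial)=f_{\beta,\beta_0}(\mu,\lambda+\partial)\,f_{\alpha_0,\beta+\beta_0}(\lambda,\partial);
\]
and setting $\alpha+\beta=\alpha_0$, $\gamma=\beta_0$ annihilates the whole LHS, producing the pure product identity
\[
f_{\alpha_0-\alpha,\beta_0}(\mu,\lambda+\partial)\,f_{\alpha,\alpha_0-\alpha+\beta_0}(\lambda,\partial)=f_{\alpha,\beta_0}(\lambda,\mu+\partial)\,f_{\alpha_0-\alpha,\alpha+\beta_0}(\mu,\partial).
\]
Each $C_i$ is of the form $(\gamma_1+b)\lambda-(\gamma_2+b)\mu+c$ with $c\in\mathbb{C}$, hence a nonzero polynomial.

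Next I propagate the zero along the $\beta_0$-column $S:=\{\delta\in\Delta:f_{\delta,\beta_0}=0\}$, which contains $\alpha_0$. The first identity shows that whenever $\alpha\in S$ or $f_{\alpha_0,\alpha+\beta_0}=0$, the nonvanishing of $C_1$ forces $\alpha+\alpha_0\in S$; the second identity gives an analogous implication with the roles of the two indices swapped. Iterating in the UFD $\mathbb{C}[\lambda,\mu,\partial]$ and combining with the third identity via degree comparison (for instance by setting $\mu=0$ or $\lambda=0$ to eliminate variables), I will conclude $S=\Delta$, i.e.\ $f_{\alpha,\beta_0}(\lambda,\partial)=0$ for every $\alpha\in\Delta$.

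Finally, with $f_{\alpha,\beta_0}\equiv 0$ for every $\alpha$ in hand, I substitute $\beta+\gamma=\beta_0$ in~(\ref{i1}): the first RHS term vanishes because $f_{\alpha,\beta_0}=0$, giving a relation of the same shape as the $C_1$-identity above but with $\beta_0-\beta$ in place of $\beta_0$. Rerunning the propagation argument forces $f_{\alpha,\delta}=0$ for all $\alpha,\delta\in\Delta$. The main obstacle is the column-propagation step: the three product identities must be combined into a clean cascade of vanishings using only the integrality of $\mathbb{C}[\lambda,\mu,\partial]$ and the explicit form of $C_1,C_2$; once this bookkeeping is done, the column-to-columns step is a routine repetition of the same argument.
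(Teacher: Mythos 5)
There is a genuine gap, and it sits exactly where you defer the work: the ``column-propagation step'' that you call bookkeeping is in fact the mathematical core of the proposition, and the implications you actually derive cannot produce it. From your first two specializations you only get the implication ``if $\alpha\in S$ or $f_{\alpha_0,\alpha+\beta_0}=0$, then $\alpha+\alpha_0\in S$,'' and the third specialization yields only a disjunction. Starting from $\alpha_0\in S$ this closes $S$ under adding $\alpha_0$ (so $n\alpha_0\in S$ for $n\geq 1$) but gives no way to reach an arbitrary $\alpha\in\Delta$: for general $\alpha$ you know neither $\alpha\in S$ nor $f_{\alpha_0,\alpha+\beta_0}=0$, so the cascade never starts. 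Mere nonvanishing of $C_1,C_2$ and integrality of $\mathbb{C}[\lambda,\mu,\partial]$ are not enough, and setting $\mu=0$ or $\lambda=0$ in your identities relates three unknown polynomials without forcing any of them to vanish.

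The missing idea is the divisibility argument the paper uses in its Claim~2 (equation (\ref{i4})). In your first identity
\begin{equation*}
\Bigl((\alpha_0+b)\lambda-(\alpha+b)\mu+c\Bigr)\,f_{\alpha+\alpha_0,\beta_0}(\lambda+\mu,\partial)
=-f_{\alpha,\beta_0}(\lambda,\mu+\partial)\,f_{\alpha_0,\alpha+\beta_0}(\mu,\partial),
\end{equation*}
the left-hand linear factor is irreducible and genuinely involves both $\lambda$ and $\mu$ because $b\notin\Delta$ forces $\alpha+b\neq0$ and $\alpha_0+b\neq0$. Since it divides the product on the right, it divides one of the two factors; but $f_{\alpha_0,\alpha+\beta_0}(\mu,\partial)$ contains no $\lambda$, and $f_{\alpha,\beta_0}(\lambda,\mu+\partial)$ depends on $\mu,\partial$ only through $\mu+\partial$, so in either case that factor must be identically zero. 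This is what forces, for \emph{every} $\alpha$, either $f_{\alpha,\beta_0}=0$ or $f_{\alpha_0,\alpha+\beta_0}=0$, and hence $f_{\alpha+\alpha_0,\beta_0}=0$, eventually killing the whole column without any inductive cascade. The paper then finishes with its Claim~1: setting $\alpha=0$ in (\ref{i1}) and then $\lambda=0$ in (\ref{i3}) shows that $f_{0,\beta+\gamma}=0$ implies $f_{\beta,\gamma}=0$, after which Claim~2 is applied again to clear all columns; your final ``$\beta+\gamma=\beta_0$'' substitution plays the same role but again relies on the same missing mechanism. Unless you supply this irreducibility-plus-variable-count step, the proposed proof does not go through.
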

\begin{proof}
{\bf Claim 1}: If $f_{0,\beta+\gamma}(\lambda,\partial)=0$, then $f_{\beta,\gamma}(\lambda,\partial)=0$. \\
Letting $\alpha=0$ in (\ref{i1}), we can get
\begin{gather}
((\beta+b)\lambda-b\mu-\varphi(\beta))f_{\beta,\gamma}(\lambda+\mu,\partial)
=f_{\beta,\gamma}(\mu,\lambda+\partial)f_{0,\beta+\gamma}(\lambda,\partial)\nonumber\\
\label{i2}-f_{0,\gamma}(\lambda,\mu+\partial)f_{\beta,\gamma}(\mu,\partial).
\end{gather}
If $f_{0,\beta+\gamma}(\lambda,\partial)=0$, by (\ref{i2}), we get
\begin{eqnarray}
\label{i3}((\beta+b)\lambda-b\mu-\varphi(\beta))f_{\beta,\gamma}(\lambda+\mu,\partial)=-f_{0,\gamma}(\lambda,\mu+\partial)f_{\beta,\gamma}(\mu,\partial).
\end{eqnarray}
Letting $\lambda=0$ in (\ref{i3}), we can easily get $f_{\beta,\gamma}(\lambda,\partial)=0$.

{\bf Claim 2}: If $f_{\beta,\gamma}(\lambda,\partial)=0$, then $f_{\alpha,\gamma}(\lambda,\partial)=0$ for any $\alpha \in \Delta$. \\
If $f_{\beta,\gamma}(\lambda,\partial)=0$, then by (\ref{i1}), one can obtain
\begin{gather}
((\beta+b)\lambda-(\alpha+b)\mu+\frac{\varphi(\alpha)\beta-\varphi(\beta)\alpha+b(\varphi(\alpha)-\varphi(\beta))}{b})f_{\alpha+\beta,\gamma}(\lambda+\mu,\partial)\nonumber\\
\label{i4}=-f_{\alpha,\gamma}(\lambda,\mu+\partial)f_{\beta,\alpha+\gamma}(\mu,\partial).
\end{gather}
Since $b\notin \Delta$, $\beta+b\neq 0$ and $\alpha+b\neq 0$.
Therefore,
\[
(\beta+b)\lambda-(\alpha+b)\mu+\frac{\varphi(\alpha)\beta-\varphi(\beta)\alpha+b(\varphi(\alpha)-\varphi(\beta))}{b}
\]
is an irreducible polynomial. As a result, $(\beta+b)\lambda-(\alpha+b)\mu+\frac{\varphi(\alpha)\beta-\varphi(\beta)\alpha+b(\varphi(\alpha)-\varphi(\beta))}{b}$ can divide $f_{\alpha,\gamma}(\lambda,\mu+\partial)$ or $f_{\beta,\alpha+\gamma}(\mu,\partial)$.  If
\[
(\beta+b)\lambda-(\alpha+b)\mu+\frac{\varphi(\alpha)\beta-\varphi(\beta)\alpha+b(\varphi(\alpha)-\varphi(\beta))}{b} \mid f_{\beta,\alpha+\gamma}(\mu,\partial),
\]
then $f_{\beta,\alpha+\gamma}(\mu,\partial)=0$. So, by (\ref{i4}), $f_{\alpha+\beta,\gamma}(\lambda,\partial)=0$ for any $\alpha\in \Delta$. Therefore, $f_{\alpha,\gamma}(\lambda,\partial)=0$ for any $\alpha\in \Delta$. If
\[
(\beta+b)\lambda-(\alpha+b)\mu+\frac{\varphi(\alpha)\beta-\varphi(\beta)\alpha+b(\varphi(\alpha)-\varphi(\beta))}{b}|f_{\alpha,\gamma}(\lambda,\mu+\partial),
\]
then it is easy to get $f_{\alpha,\gamma}(\lambda,\partial)=0$ for any $\alpha \in \Delta$.

This proposition can be directly obtained from the above claims.
\end{proof}

In what follows, we will assume that $V$ is a non-trivial module of $CL(b,\varphi)$.
According to Proposition \ref{pp1}, all $f_{\alpha,\gamma}(\lambda,\partial)$ are non-zero.
\begin{lem}\label{le1}
For any $\gamma\in \Delta$, $f_{0,\gamma}(\lambda,\partial)=b(\partial+q_\gamma\lambda+m_\gamma)$ for some
$q_\gamma$, $m_\gamma \in\mathbb{C}$.
\end{lem}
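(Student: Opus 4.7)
The plan is to reduce this to the well-understood Virasoro case by specializing the functional equation (\ref{i1}) to $\alpha = \beta = 0$. With this choice, the prefactor on the left-hand side collapses to $b(\lambda - \mu)$, and the equation becomes a self-contained functional equation for $f_{0,\gamma}$ alone:
\begin{equation*}
b(\lambda-\mu)\,f_{0,\gamma}(\lambda+\mu,\partial) \;=\; f_{0,\gamma}(\mu,\lambda+\partial)f_{0,\gamma}(\lambda,\partial) - f_{0,\gamma}(\lambda,\mu+\partial)f_{0,\gamma}(\mu,\partial).
\end{equation*}
This is nothing but the associativity/Jacobi constraint coming from $[{x_0}_\lambda x_0] = b(\partial+2\lambda)x_0$, which is Virasoro after rescaling.

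Concretely, I would set $L := x_0/b$ and observe that $[L_\lambda L] = (\partial+2\lambda)L$, so $\mathbb{C}[\partial]L \subset CL(b,\varphi)$ is a copy of the Virasoro Lie conformal algebra. Since $V = \oplus_\alpha \mathbb{C}[\partial]M_\alpha$ is a free intermediate series $CL(b,\varphi)$-module, each summand $\mathbb{C}[\partial]M_\gamma$ is a free rank-one $\mathbb{C}[\partial]$-module that, by the above reduction, is a module over the Virasoro subalgebra $\mathbb{C}[\partial]L$ with action $L_\lambda M_\gamma = \tfrac{1}{b}f_{0,\gamma}(\lambda,\partial)M_\gamma$.

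At this point I would invoke the classification of free rank-one Virasoro conformal modules from \cite{CK1}: any such module has the form $M_{q,m}$ with $L_\lambda v = (\partial + q\lambda + m)v$, together with the trivial action $L_\lambda v = 0$. Since we are in the situation after Proposition~\ref{pp1}, where every $f_{\alpha,\gamma}(\lambda,\partial)$ is nonzero, the trivial option is excluded. Hence $L_\lambda M_\gamma = (\partial + q_\gamma\lambda + m_\gamma)M_\gamma$ for some $q_\gamma, m_\gamma \in \mathbb{C}$, and multiplying back by $b$ yields $f_{0,\gamma}(\lambda,\partial) = b(\partial + q_\gamma\lambda + m_\gamma)$, as claimed.

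I do not anticipate a serious obstacle here: the computation needed to derive the Virasoro functional equation is a one-line specialization, and the classification of free rank-one Virasoro modules is a well-established result already cited earlier in the paper. The only point to be careful about is ensuring the nontrivial branch of the Virasoro classification applies, which is guaranteed by the standing assumption (via Proposition~\ref{pp1}) that all structure constants $f_{\alpha,\gamma}$ are nonzero. If one wished to avoid citing \cite{CK1}, one could instead solve the displayed functional equation directly by first setting $\mu = 0$ to obtain $f_{0,\gamma}(0,\partial) = b\partial + bm_\gamma$, then extracting the degree in $\lambda$ by a leading-coefficient argument to force $f_{0,\gamma}$ to be affine in both $\lambda$ and $\partial$; but invoking the known classification is much cleaner.
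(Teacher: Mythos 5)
Your proposal is correct and follows essentially the same route as the paper: rescale $x_0$ to $x_0/b$ to get a Virasoro subalgebra, note that $\mathbb{C}[\partial]M_\gamma$ is a nontrivial free rank-one module over it (nontriviality being guaranteed by Proposition \ref{pp1}), and invoke the classification of such Virasoro modules from \cite{CK1}. The only difference is that you spell out the specialized functional equation and the nontriviality check explicitly, which the paper leaves implicit.
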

\begin{proof}
Since $[{x_0}_\lambda  x_0]=b(\partial+2\lambda)x_0$, then
$\mathbb{C}[\partial]x_0^{'}$ is the Virasoro Lie conformal algebra by letting $x_0^{'}=\frac{x_0}{b}$.
Since $\mathbb{C}[\partial]M_\gamma$ is a non-trivial module of rank one of
$\mathbb{C}[\partial]x_0^{'}$, it follows that
\[
f_{0,\gamma}(\lambda,\partial)=b(\partial+q_\gamma\lambda+m_\gamma)
\]
for some $q_\gamma$, $m_\gamma \in\mathbb{C}$ according to the representation theory of Virasoro Lie conformal algebra, as desired.
\end{proof}

\begin{lem}\label{le2}
For any $\beta\in \Delta$, we have $m_\beta=c-\frac{\varphi(\beta)}{b}$ for some $c\in \mathbb{C}$.
\end{lem}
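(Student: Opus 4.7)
The plan is to extract the relation between $m_{\beta+\gamma}$ and $m_\gamma$ by specializing the master cocycle equation (\ref{i1}) so that only the $f_{0,\cdot}$ terms (whose structure we already know from Lemma \ref{le1}) appear in full, while $f_{\beta,\gamma}$ appears only as an overall non-zero factor that can be cancelled.

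First, I would set $\alpha=0$ in (\ref{i1}). Since $\varphi$ is a group homomorphism, $\varphi(0)=0$, so the bracket coefficient collapses to $(\beta+b)\lambda - b\mu - \varphi(\beta)$, and (\ref{i1}) becomes
\begin{equation*}
\bigl((\beta+b)\lambda - b\mu - \varphi(\beta)\bigr) f_{\beta,\gamma}(\lambda+\mu,\partial)
= f_{\beta,\gamma}(\mu,\lambda+\partial)\,f_{0,\beta+\gamma}(\lambda,\partial)
- f_{0,\gamma}(\lambda,\mu+\partial)\,f_{\beta,\gamma}(\mu,\partial).
\end{equation*}
Next I would specialize $\lambda=0$. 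The left-hand side simplifies to $(-b\mu-\varphi(\beta))f_{\beta,\gamma}(\mu,\partial)$, while on the right the two $f_{0,\cdot}$ factors become $f_{0,\beta+\gamma}(0,\partial)=b(\partial+m_{\beta+\gamma})$ and $f_{0,\gamma}(0,\mu+\partial)=b(\mu+\partial+m_\gamma)$ by Lemma \ref{le1}. Both surviving terms carry the common factor $f_{\beta,\gamma}(\mu,\partial)$.

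Since $V$ is assumed non-trivial, Proposition \ref{pp1} guarantees $f_{\beta,\gamma}(\mu,\partial)\neq 0$, so I may cancel this factor. The equation reduces to
\begin{equation*}
-b\mu - \varphi(\beta) \;=\; b\bigl(\partial + m_{\beta+\gamma}\bigr) - b\bigl(\mu+\partial + m_\gamma\bigr) \;=\; b(m_{\beta+\gamma}-m_\gamma) - b\mu,
\end{equation*}
which yields the additive identity $m_{\beta+\gamma} - m_\gamma = -\varphi(\beta)/b$. Finally, setting $\gamma=0$ gives $m_\beta = m_0 - \varphi(\beta)/b$, so the lemma follows with $c := m_0$.

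No real obstacle is anticipated: the only delicate point is ensuring that the cancellation step is legitimate, which is precisely what Proposition \ref{pp1} together with the standing non-triviality assumption provides. The computation is otherwise just a direct specialization of the already-derived compatibility (\ref{i1}) combined with the explicit form of $f_{0,\gamma}$ from Lemma \ref{le1}.
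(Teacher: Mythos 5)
Your proposal is correct and follows essentially the same route as the paper: set $\alpha=0$ in (\ref{i1}) (the paper's (\ref{i2})), substitute the form of $f_{0,\cdot}$ from Lemma \ref{le1}, specialize $\lambda=0$, and cancel the nonzero factor $f_{\beta,\gamma}(\mu,\partial)$ (justified by Proposition \ref{pp1}) to get $m_{\beta+\gamma}=m_\gamma-\frac{\varphi(\beta)}{b}$, then take $\gamma=0$. This is exactly the paper's argument.
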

\begin{proof}
By Lemma \ref{le1} and (\ref{i2}), one gets
\begin{gather}
((\beta+b)\lambda-b\mu-\varphi(\beta))f_{\beta,\gamma}(\lambda+\mu,\partial)
=bf_{\beta,\gamma}(\mu,\lambda+\partial)(\partial+q_{\beta+\gamma}\lambda+m_{\beta+\gamma})\nonumber\\
\label{i5}-bf_{\beta,\gamma}(\mu,\partial)(\partial+\mu+q_\gamma\lambda+m_\gamma).
\end{gather}

Setting $\lambda=0$ in (\ref{i5}), we obtain $bf_{\beta,\gamma}(\mu,\partial)(m_{\beta+\gamma}-m_\gamma+\frac{\varphi(\beta)}{b})=0$. Since
$f_{\beta,\gamma}(\mu,\partial)\neq 0$, it follows that
\begin{eqnarray}
\label{i6}m_{\beta+\gamma}=m_\gamma-\frac{\varphi(\beta)}{b}.
\end{eqnarray}
Setting $\gamma=0$ in (\ref{i6}) and $m_0=c$, we get $m_\beta=c-\frac{\varphi(\beta)}{b}$. Then (\ref{i6}) naturally holds due to the fact that $\varphi$ is a group homomorphism.
\end{proof}

\begin{lem}\label{le3}
Set $f_{\beta,\gamma}(\lambda,\partial)=\sum_{i=0}^ma_i(\lambda)\partial^i$ for some $\beta$, $\gamma\in \Delta$
with $a_m(\lambda)\neq 0$. Then $q_{\beta+\gamma}-q_\gamma=-m+\frac{\beta+b}{b}$.
\end{lem}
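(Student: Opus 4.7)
The plan is to substitute the expansion $f_{\beta,\gamma}(\lambda,\partial)=\sum_{i=0}^m a_i(\lambda)\partial^i$ into equation (\ref{i5}) and extract the coefficient of $\partial^m$ on both sides, using Lemma \ref{le2} to simplify the resulting identity.

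First I would note that on the right-hand side of (\ref{i5}), both $f_{\beta,\gamma}(\mu,\lambda+\partial)(\partial+q_{\beta+\gamma}\lambda+m_{\beta+\gamma})$ and $f_{\beta,\gamma}(\mu,\partial)(\partial+\mu+q_\gamma\lambda+m_\gamma)$ have degree $m+1$ in $\partial$ with identical leading coefficient $a_m(\mu)\partial^{m+1}$ (since $a_i(\mu)(\lambda+\partial)^i$ has the same top $\partial$-degree as $a_i(\mu)\partial^i$), so the $\partial^{m+1}$ terms cancel in the difference. Next, using the binomial expansion of $(\lambda+\partial)^i$, I would compute the $\partial^m$ coefficient of the first product: the $\partial^m$ contributions come from $a_m(\mu)(\lambda+\partial)^m$ (which yields $a_m(\mu)(m\lambda \partial^{m-1}+\partial^m)$) times $(\partial+q_{\beta+\gamma}\lambda+m_{\beta+\gamma})$, plus $a_{m-1}(\mu)\partial^{m-1}\cdot\partial$. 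Adding these gives $a_m(\mu)\bigl((q_{\beta+\gamma}+m)\lambda+m_{\beta+\gamma}\bigr)+a_{m-1}(\mu)$. The second product gives analogously $a_m(\mu)(q_\gamma\lambda+\mu+m_\gamma)+a_{m-1}(\mu)$.

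After subtracting, the $a_{m-1}(\mu)$ terms cancel and, invoking Lemma \ref{le2} to replace $m_{\beta+\gamma}-m_\gamma$ by $-\varphi(\beta)/b$, the $\partial^m$ identity reduces to
\begin{equation*}
\bigl((\beta+b)\lambda-b\mu-\varphi(\beta)\bigr)\, a_m(\lambda+\mu)\;=\;a_m(\mu)\bigl[b(q_{\beta+\gamma}+m-q_\gamma)\lambda-b\mu-\varphi(\beta)\bigr].
\end{equation*}
The key observation is then a degree count in $\lambda$: the right-hand side is affine in $\lambda$, while the left-hand side has degree $1+\deg_\lambda a_m$. Since $\beta+b\neq 0$ (because $b\notin\Delta$), this forces $\deg_\lambda a_m=0$, so $a_m$ is a nonzero constant. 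With $a_m(\lambda+\mu)=a_m(\mu)$, the identity reduces to matching the coefficients of $\lambda$, giving $\beta+b=b(q_{\beta+\gamma}+m-q_\gamma)$, which is exactly $q_{\beta+\gamma}-q_\gamma=-m+\frac{\beta+b}{b}$.

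The only delicate point is the careful bookkeeping of the $\partial^m$ coefficient after the binomial expansion of $(\lambda+\partial)^i$ and ensuring the cancellation of $a_{m-1}(\mu)$; once that is done, the degree-in-$\lambda$ argument (which crucially uses $\beta+b\neq 0$) finishes the proof almost immediately.
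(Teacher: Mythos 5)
Your proof is correct. It differs from the paper's argument in how the key identity is exploited: the paper also starts from (\ref{i5}) (the $\alpha=0$ instance of (\ref{i1})) and Lemma \ref{le2}, but it first specializes $\mu=\frac{\beta+b}{b}\lambda-\frac{\varphi(\beta)}{b}$, which makes the factor $(\beta+b)\lambda-b\mu-\varphi(\beta)$ vanish and thus removes the term containing $f_{\beta,\gamma}(\lambda+\mu,\partial)$ altogether; after changing variables back it compares the coefficients of $\partial^m$ in the resulting one-parameter identity (\ref{k2}), so no degree argument is needed. You instead keep the full two-variable identity, extract the coefficient of $\partial^m$ directly (your bookkeeping of the $(\lambda+\partial)^i$ expansion, the cancellation of the $\partial^{m+1}$ and $a_{m-1}(\mu)$ terms, and the use of $m_{\beta+\gamma}-m_\gamma=-\varphi(\beta)/b$ are all right, including the overall factor $b$), and then use the $\lambda$-degree count together with $\beta+b\neq 0$ (which does follow from $2b\notin\Delta$, since $\Delta$ is a group) to force $a_m$ to be constant before matching the $\lambda$-coefficients. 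Your route buys a small extra piece of information for free, namely that the top coefficient $a_m$ is a nonzero constant (consistent with the explicit forms later obtained in Lemma \ref{lem2}), while the paper's substitution trick avoids any discussion of $\deg_\lambda a_m$. One cosmetic remark: your derivation is phrased for $m\geq 1$ (it mentions $a_{m-1}$ and $m\lambda\partial^{m-1}$), but the resulting identity, and hence the conclusion, holds verbatim for $m=0$ as well, which is the case the paper dismisses as obvious; it would be worth a sentence to say so.
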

\begin{proof}
Set $\mu=\frac{\beta+b}{b}\lambda-\frac{\varphi(\beta)}{b}$ in (\ref{i5}).
By Lemma \ref{le2}, one can get
\begin{eqnarray}
\label{k1}f_{\beta,\gamma}(\frac{\beta+b}{b}\lambda-\frac{\varphi(\beta)}{b},\lambda+\partial)(\partial+q_{\beta+\gamma}\lambda+m_{\beta+\gamma})
\\
=f_{\beta,\gamma}(\frac{\beta+b}{b}\lambda-\frac{\varphi(\beta)}{b},\partial)(\partial+(\frac{\beta+b}{b}+q_\gamma)\lambda+m_{\beta+\gamma}).\nonumber
\end{eqnarray}
Replacing $\frac{\beta+b}{b}\lambda-\frac{\varphi(\beta)}{b}$ by $\mu$ in (\ref{k1}), we can obtain
\begin{gather}
\label{k2}f_{\beta,\gamma}(\mu,\frac{b}{\beta+b}\mu+\frac{\varphi(\beta)}{\beta+b}+\partial)(\partial+\frac{bq_{\beta+\gamma}}{\beta+b}\mu
+\frac{q_{\beta+\gamma}\varphi(\beta)}{\beta+b}+m_{\beta+\gamma})\\
=f_{\beta,\gamma}(\mu,\partial)(\partial+(1+\frac{bq_\gamma}{\beta+b})\mu+\frac{\varphi(\beta)}{b}+\frac{q_\gamma\varphi(\beta)}{\beta+b}+m_{\beta+\gamma}).\nonumber
\end{gather}
If $m\geq 1$, by comparing the coefficients of $\partial^m$ in (\ref{k2}), one can get
\begin{eqnarray*}
a_m(\mu)(\frac{bq_{\beta+\gamma}}{\beta+b}\mu+\frac{q_{\beta+\gamma}\varphi(\beta)}{\beta+b}+m_{\beta+\gamma})+a_m(\mu)m(\frac{b}{\beta+b}\mu+\frac{\varphi(\beta)}{\beta+b})
\\
=a_m(\mu)((1+\frac{bq_\gamma}{\beta+b})\mu+\frac{\varphi(\beta)}{b}+\frac{q_\gamma\varphi(\beta)}{\beta+b}+m_{\beta+\gamma}).
\end{eqnarray*}
Since $a_m(\mu)\neq 0$, we can directly obtain that $q_{\beta+\gamma}-q_\gamma=-m+\frac{\beta+b}{b}$.

It is obvious that when $m=0$, this lemma also holds.
\end{proof}

\begin{lem}\label{lem1}
For any $\beta$, $\gamma\in \Delta $, the degree of $f_{\beta,\gamma}(\lambda,\partial)$ is smaller than $3$.
\end{lem}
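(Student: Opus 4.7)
The plan is to first upgrade the shift equation (\ref{k2}) into an explicit product factorization of $f_{\beta,\gamma}(\mu,\partial)$, and then substitute a root of $f_{\beta,\gamma}$ into the full identity (\ref{i5}) to obtain a polynomial identity that forces $m:=\deg_\partial f_{\beta,\gamma}$ to satisfy $m\leq 2$.

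First, I would rewrite (\ref{k2}) (after using Lemma \ref{le3}) as
\[f_{\beta,\gamma}(\mu,\partial+p)\bigl(\partial+q_{\beta+\gamma}p+m_{\beta+\gamma}\bigr) = f_{\beta,\gamma}(\mu,\partial)\bigl(\partial+(q_{\beta+\gamma}+m)p+m_{\beta+\gamma}\bigr),\]
with $p:=(b\mu+\varphi(\beta))/(\beta+b)$, and view this as a first-order shift equation in $\partial$ for the monic polynomial $f_{\beta,\gamma}(\mu,\partial)\in\mathbb{C}(\mu)[\partial]$. A standard argument on how the multiset of roots moves under $\partial\mapsto \partial+p$ forces all $m$ roots to form an arithmetic progression of step $-p$, yielding
\[f_{\beta,\gamma}(\mu,\partial) = c_m\prod_{k=0}^{m-1}\bigl(\partial+(q_{\beta+\gamma}+k)p+m_{\beta+\gamma}\bigr),\]
where $c_m\in\mathbb{C}^{*}$ is the (constant) leading coefficient.

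Next, I would substitute the root $\partial=-q_{\beta+\gamma}p-m_{\beta+\gamma}$ into (\ref{i5}). This annihilates $f_{\beta,\gamma}(\mu,\partial)$, so the second term on the right of (\ref{i5}) drops out. Writing $\tau:=b\lambda/(\beta+b)$ and $a:=(\beta+b)/b$, and using $(\beta+b)\tau=b\lambda$, the surviving identity reduces (for $q_{\beta+\gamma}\neq 0$) to the polynomial identity
\[\prod_{k=1}^{m-1}\bigl(kp+(q_{\beta+\gamma}+k)\tau\bigr) = \prod_{k=1}^{m-1}\bigl(kp+a\tau\bigr)\]
in the independent variables $p$ and $\tau$. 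The case $q_{\beta+\gamma}=0$ is handled symmetrically by substituting the next root $\partial=-p-m_{\beta+\gamma}$, yielding the analogous identity with index set shifted by one.

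Matching factors forces the multisets $\{(q_{\beta+\gamma}+k)/k\}_{k=1}^{m-1}$ and $\{a/k\}_{k=1}^{m-1}$ to coincide. For $m=3$ the unique matching yields $a=2/3$, hence $\beta=b(a-1)=-b/3$; but then $3\beta=-b\in\Delta$ would force $b\in\Delta$, contradicting the standing hypothesis $b\notin\Delta$. For $m\geq 4$ the multiset equality gives $m-1\geq 3$ elementary-symmetric relations in the two unknowns $q_{\beta+\gamma},a$; comparing the first three of them ($e_1,e_2,e_3$) quickly produces an inconsistent system, ruling out $m\geq 4$ as well. The main obstacle I expect is the very first step, namely rigorously extracting the product factorization from the single functional equation (\ref{k2}): one must verify that $f_{\beta,\gamma}(\mu,\partial)$ has no roots outside the appropriate arithmetic progression and that each of its $m$ roots appears with multiplicity one in the correct window. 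Once this factorization is secured, the substitution, the multiset comparison, and the case analysis on small $m$ are short.
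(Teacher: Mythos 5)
Your strategy is genuinely different from the paper's (the paper sets $\mu=0$ to get (\ref{i7}), substitutes back into (\ref{i5}) and grinds through coefficient comparisons in (\ref{i8})--(\ref{ii15})), and its skeleton does work: the shift equation you extract from (\ref{k2}) is correct, the coprimality-and-induction argument does give $f_{\beta,\gamma}(\mu,\partial)=c(\mu)\prod_{k=0}^{m-1}\bigl(\partial+(q_{\beta+\gamma}+k)p+m_{\beta+\gamma}\bigr)$ over $\mathbb{C}(\mu)[\partial]$, the substitution of the root into (\ref{i5}) does collapse to your product identity, and your $m=3$ analysis (and its $q_{\beta+\gamma}=0$ analogue, giving $a=2$, $\beta=b$) reproduces exactly the contradictions $\beta=-b/3$, $\beta=b$ that the paper finds. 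However, there are two genuine gaps. First, the constancy of the leading coefficient is asserted, not proved: (\ref{k2}) is homogeneous in $f_{\beta,\gamma}$, so it only yields $c(\mu)=a_m(\mu)\in\mathbb{C}[\mu]$, and you silently use $c(\lambda+\mu)=c(\mu)$ when you cancel down to $\prod_{k=1}^{m-1}(kp+(q_{\beta+\gamma}+k)\tau)=\prod_{k=1}^{m-1}(kp+a\tau)$. This is fixable (compare the $\partial^{m}$-coefficients of (\ref{i5}) using Lemmas \ref{le2} and \ref{le3}, which forces $a_m(\lambda+\mu)=a_m(\mu)$), and the same point, or the bridge $d(\partial)=f_{\beta,\gamma}(0,\partial)$ together with (\ref{i7}), is also needed to turn your bound on $\deg_\partial f_{\beta,\gamma}$ into the total-degree statement that Lemma \ref{lem2} actually uses; but as written it is missing.

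Second, the $m\geq4$ endgame is not a proof. You must rule out a solution $(q_{\beta+\gamma},a)$ of the multiset equality for \emph{every} $m\geq 4$, i.e.\ infinitely many cases, and ``comparing $e_1,e_2,e_3$ quickly produces an inconsistent system'' is an unverified claim: $e_1$ and $e_2$ do pin down $(q_{\beta+\gamma},a)$ uniquely (this already needs the Cauchy--Schwarz-type inequality $n\sum 1/k^2>(\sum 1/k)^2$ to see the linear system is nondegenerate), but showing that this unique candidate violates $e_3$ for all $m$ is a uniform computation you have not done. The conclusion is true, and a clean uniform argument exists along your lines: the multiset equality says $k\mapsto ak/(k+q_{\beta+\gamma})$ permutes $\{1,\dots,m-1\}$; two of these relations force $a,q_{\beta+\gamma}\in\mathbb{Q}$, so this M\"obius map has finite order with real multiplier $a/q_{\beta+\gamma}$, hence order $2$, i.e.\ $q_{\beta+\gamma}=-a$; then each $2$-cycle $\{j,k\}$ satisfies $1/j+1/k=1/a$, which is impossible since the pair containing $1$ has reciprocal sum greater than $1$ while every pair inside $\{2,3,\dots\}$ has reciprocal sum at most $5/6$ (the leftover cases with a fixed point are checked directly). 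With the constant-leading-coefficient step supplied and this (or an equivalent) uniform argument in place of the $e_1,e_2,e_3$ sketch, your route gives a legitimate and arguably more conceptual alternative to the paper's coefficient chase.
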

\begin{proof}
Letting $\mu=0$ in (\ref{i5}), setting $f_{\beta,\gamma}(0,\partial)=d(\partial)$ and by $\beta+b\notin \Delta$, we get
\begin{eqnarray}
\label{i7}f_{\beta,\gamma}(\lambda,\partial)=\frac{b(d(\lambda+\partial)(\partial+q_{\beta+\gamma}\lambda+m_{\beta+\gamma})-d(\partial)(\partial+q_\gamma\lambda+m_\gamma))}{(\beta+b)\lambda-\varphi(\beta)}.
\end{eqnarray}
Taking (\ref{i7}) into (\ref{i5}) and by some computations, we get
\begin{gather}
((\beta+b)\lambda-b\mu-\varphi(\beta))(\partial+q_{\beta+\gamma}(\lambda+\mu)+m_{\beta+\gamma})
((\beta+b)\mu-\varphi(\beta))d(\lambda+\mu+\partial)\nonumber\\
-((\beta+b)\lambda-b\mu-\varphi(\beta))(\partial+q_\gamma(\lambda+\mu)+m_\gamma)((\beta+b)\mu-\varphi(\beta))d(\partial)\nonumber\\
=b(\partial+q_{\beta+\gamma}\lambda+m_{\beta+\gamma})(\partial+\lambda+q_{\beta+\gamma}\mu+m_{\beta+\gamma})
((\beta+b)(\lambda+\mu)-\varphi(\beta))d(\lambda+\mu+\partial)\nonumber\\
-b(\partial+q_{\beta+\gamma}\lambda+m_{\beta+\gamma})(\partial+\lambda+q_{\gamma}\mu+m_{\gamma})
((\beta+b)(\lambda+\mu)-\varphi(\beta))d(\lambda+\partial)\nonumber\\
-b(\partial+q_{\beta+\gamma}\mu+m_{\beta+\gamma})(\partial+\mu+q_{\gamma}\lambda+m_{\gamma})
((\beta+b)(\lambda+\mu)-\varphi(\beta))d(\mu+\partial)\nonumber\\
\label{i8}+b(\partial+q_{\gamma}\mu+m_{\gamma})(\partial+\mu+q_{\gamma}\lambda+m_{\gamma})
((\beta+b)(\lambda+\mu)-\varphi(\beta))d(\partial).
\end{gather}
Set $d(\lambda)=\sum_{i=0}^md_i\lambda^i$ with $d_m\neq 0$. By Lemma \ref{le3},  $q_{\beta+\gamma}-q_\gamma=-m+\frac{\beta+b}{b}$.
In addition, by considering the terms of $(m+3)$th degree in (\ref{i8}), one can get
\begin{gather}
((\beta+b)\lambda-b\mu)\mu ((\partial+q_{\beta+\gamma}(\lambda+\mu))
(\lambda+\mu+\partial)^m
-(\partial+q_\gamma(\lambda+\mu))\partial^m)\nonumber\\
=b(\partial+q_{\beta+\gamma}\lambda)(\lambda+\mu)((\partial+\lambda+q_{\beta+\gamma}\mu)
(\lambda+\mu+\partial)^m-(\partial+\lambda+q_{\gamma}\mu)(\lambda+\partial)^m)\nonumber\\
\label{ii8}-b(\partial+\mu+q_{\gamma}\lambda)(\lambda+\mu)((\partial+q_{\beta+\gamma}\mu)
(\mu+\partial)^m
-(\partial+q_{\gamma}\mu)\partial^m).
\end{gather}
If $m\geq 3$, by comparing the coefficients of $\lambda^2\partial\mu^m$ in (\ref{ii8}), we can get
\begin{eqnarray}
\label{k5}\frac{\beta+b}{b}(m-1)+\frac{\beta+b}{b}q_{\beta+\gamma}(m^2-m)\\
=\frac{m^2-m+2}{2}+\frac{q_{\beta+\gamma}m}{2}(m^2-2m+5)+q_{\beta+\gamma}^2m(m-1).\nonumber
\end{eqnarray}
Setting $\partial=-q_{\beta+\gamma}\lambda$ in (\ref{ii8}), one can get
\begin{eqnarray}
\label{k6}((\beta+b)\lambda-b\mu)\mu (q_{\beta+\gamma}\mu
((1-q_{\beta+\gamma})\lambda+\mu)^m
-((q_\gamma-q_{\beta+\gamma})\lambda+q_\gamma\mu)(-q_{\beta+\gamma}\lambda)^m)\\
=-b((q_{\gamma}-q_{\beta+\gamma})\lambda+\mu)(\lambda+\mu)(q_{\beta+\gamma}(\mu-\lambda)
(\mu-q_{\beta+\gamma}\lambda)^m
-(q_{\gamma}\mu-q_{\beta+\gamma}\lambda)(-q_{\beta+\gamma}\lambda)^m).\nonumber
\end{eqnarray}
If $m\geq 3$, by comparing the coefficients of $\lambda^2\mu^{m+1}$ and $\lambda^3\mu^m$ in (\ref{k6}), we can get
\begin{eqnarray}
\label{k7}m(\beta+b)q_{\beta+\gamma}-m(\beta+b)q_{\beta+\gamma}^2-bq_{\beta+\gamma}C_m^2+2bq_{\beta+\gamma}^2C_m^2=b(q_\gamma-q_{\beta+\gamma})m
q_{\beta+\gamma}^2+bq_{\beta+\gamma},\\
\label{k8}(\beta+b)C_m^2q_{\beta+\gamma}-2(\beta+b)q_{\beta+\gamma}^2C_m^2+(\beta+b)C_m^2q_{\beta+\gamma}^3-bq_{\beta+\gamma}C_m^3
+3bq_{\beta+\gamma}^2C_m^3-3bC_m^3q_{\beta+\gamma}^3\\
=b(q_\gamma-q_{\beta+\gamma})q_{\beta+\gamma}-bC_m^2(q_\gamma-q_{\beta+\gamma})q_{\beta+\gamma}^3-bmq_{\beta+\gamma}^2 ~~~~(m\geq 4).\nonumber
\end{eqnarray}
Using $q_\gamma-q_{\beta+\gamma}=m-1-\frac{\beta}{b}$, from (\ref{k7}), we can get $q_{\beta+\gamma}=0$ or $q_{\beta+\gamma}=\frac{\beta+b}{b}-\frac{m-1}{2}-\frac{1}{m}$. Suppose $q_{\beta+\gamma}\neq 0$. By (\ref{k8}), one can obtain
\begin{eqnarray}
\label{k9}-bm(m-1)q_{\beta+\gamma}^2+((\beta+b)m(m-1)-\frac{bm}{2}(m^2-3m+4))q_{\beta+\gamma}\\
+\frac{bm}{6}(m^2-3m+8)-\frac{\beta+b}{2}(m^2-m+2)
=0.\nonumber
\end{eqnarray}
Note that $q_{\beta+\gamma}=\frac{\beta+b}{b}-\frac{m-1}{2}-\frac{1}{m}$. Taking it into (\ref{k5}) and (\ref{k9}) , we can get
\begin{eqnarray}
\label{yy1}4\frac{\beta}{b}=m+\frac{2}{m}-5,~~~~m(13-m^2)+\frac{12}{m}=24(1+\frac{\beta}{b}).
\end{eqnarray}
According to the above equalities, we have $m^3-7m-6=0$. Therefore, $m=3$.  By (\ref{yy1}), it follows that $\beta=-\frac{1}{3}b$. Since $\beta\in \Delta$, then $b\in \Delta$, we get a contradiction. Therefore, $q_{\beta+\gamma}=0$ and $q_\gamma=(m-1)-\frac{\beta}{b}$.
By (\ref{k5}), we get
\begin{eqnarray}
\label{yy2}\frac{\beta+b}{b}(m-1)=\frac{m^2-m+2}{2}.
\end{eqnarray}

Letting $\mu=\frac{\beta+b}{b}\lambda$ in (\ref{ii8}), one can get
\begin{gather}
\partial((\partial+\lambda)
(\lambda+\frac{\beta+b}{b}\lambda+\partial)^m-(\partial+\lambda+q_{\gamma}\frac{\beta+b}{b}\lambda)(\lambda+\partial)^m)\nonumber\\
\label{ii9}=(\partial+\frac{\beta+b}{b}\lambda+q_{\gamma}\lambda)(\partial
(\frac{\beta+b}{b}\lambda+\partial)^m
-(\partial+q_{\gamma}\frac{\beta+b}{b}\lambda)\partial^m).
\end{gather}
Comparing the coefficients of $\lambda^4\partial^{m-2}$ in (\ref{ii9}), we obtain
\begin{eqnarray}
\label{ii15}C_m^4(1+\frac{\beta+b}{b})^4+C_m^3(1+\frac{\beta+b}{b})^3-C_m^4-(1+q_\gamma\frac{\beta+b}{b})C_m^3
=C_m^4(\frac{\beta+b}{b})^4+m C_m^3(\frac{\beta+b}{b})^3.
\end{eqnarray}
It can be directly obtained from (\ref{ii15}) that $\frac{\beta}{b}=\frac{3}{4}m-\frac{5}{4}$. Taking it into (\ref{yy2}), we can immediately obtain that $(m-3)(m+1)=0$, which is a contradiction. Therefore, we only need to consider $m=3$ in this case.
Note that in this case, $m=3$, $q_{\beta+\gamma}=0$ and $q_\gamma=2-\frac{\beta}{b}$. Taking it into (\ref{ii8}), dividing
$(\lambda+\mu)\mu$ in both two sides and comparing the coefficients of $\lambda^2\mu\partial$, we get
$2(\beta+b)-b=3b$. Therefore, $\beta=b$, also a contradiction.
Therefore, we get this lemma.
\end{proof}

\begin{lem}\label{lem2}
For any $\beta$, $\gamma\in \Delta$, $f_{\beta,\gamma}(\lambda,\partial)$ must be one of
the following forms:\\
(1) \begin{eqnarray}
\label{h1}f_{\beta,\gamma}(\lambda,\partial)=c_{\beta,\gamma},~~~\text{where $\beta\neq 0$, $q_{\beta+\gamma}-q_\gamma=\frac{\beta}{b}+1$ and $c_{\beta,\gamma}\in \mathbb{C}\setminus\{0\}$};
\end{eqnarray}
(2)
\begin{gather}
\label{h2}f_{\beta,\gamma}(\lambda,\partial)=c_{\beta,\gamma}(m_{\beta+\gamma}+\frac{\varphi(\beta)}{\beta+b}q_{\beta+\gamma}+\frac{b}{\beta+b}q_{\beta+\gamma}\lambda+\partial),\\
\text{where $q_{\beta+\gamma}-q_\gamma=\frac{\beta}{b}$ and $c_{\beta,\gamma}\in \mathbb{C}\setminus\{0\}$};\nonumber
\end{gather}
(3)\begin{gather}
\label{h3}f_{\beta,\gamma}(\lambda,\partial)=c_{\beta,\gamma}(\partial+m_{\beta+\gamma})(\partial+\frac{b}{\beta+b}\lambda+m_{\beta+\gamma}+\frac{\varphi(\beta)}{\beta+b}),\\
\text{where  $\beta\neq 0$, $q_{\beta+\gamma}=0$, $q_\gamma=1-\frac{\beta}{b}$,  and $c_{\beta,\gamma}\in \mathbb{C}\setminus\{0\}$};\nonumber
\end{gather}
(4)\begin{gather}
\label{h4}f_{\beta,\gamma}(\lambda,\partial)=c_{\beta,\gamma}((\partial+m_{\beta+\gamma})(\partial+\frac{2\beta+b}{\beta+b}\lambda+m_{\beta+\gamma}+\frac{(2\beta+b)\varphi(\beta)}{2(\beta+b)})
+\frac{\beta}{\beta+b}(\lambda+\frac{\varphi(\beta)}{b})^2),\\
\text{where $\beta\neq 0$, $q_{\beta+\gamma}=\frac{\beta}{b}$, $q_\gamma=1$,  and $c_{\beta,\gamma}\in \mathbb{C}\setminus\{0\}$}.\nonumber
\end{gather}
\end{lem}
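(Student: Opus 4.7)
The plan is to classify $f_{\beta,\gamma}(\lambda,\partial)$ by splitting on the degree $m := \deg_\partial f_{\beta,\gamma}$, then using the master equation (\ref{i5}) together with Lemmas \ref{le1}--\ref{lem1}. By Lemma \ref{lem1} the total degree of $f_{\beta,\gamma}$ is at most $2$, so $m \in \{0,1,2\}$; by Lemma \ref{le3} the difference $q_{\beta+\gamma} - q_\gamma = (\beta+b)/b - m$ is then fixed once $m$ is chosen.

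For $m = 0$, write $f_{\beta,\gamma} = A(\lambda)$. Substituting into (\ref{i5}) and using $m_{\beta+\gamma} - m_\gamma = -\varphi(\beta)/b$ from Lemma \ref{le2}, the right-hand side collapses to $A(\mu)\bigl((\beta+b)\lambda - b\mu - \varphi(\beta)\bigr)$, so cancelling the nonzero common factor yields $A(\lambda+\mu) = A(\mu)$. Hence $A$ is a nonzero constant $c_{\beta,\gamma}$, and $\beta \neq 0$ is forced by Lemma \ref{le1} (since $f_{0,\gamma}$ is linear in $\partial$). This produces case (1).

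For $m = 1$, write $f_{\beta,\gamma} = A(\lambda) + B(\lambda)\partial$ with $B \not\equiv 0$. The $\partial^1$-coefficient identity in (\ref{i5}) again cancels to $B(\lambda+\mu) = B(\mu)$, so $B = c_{\beta,\gamma}$ is a nonzero constant. Then comparing $\partial^0$-coefficients and expanding $A(\lambda) = r + s\lambda + t\lambda^2$, the $\mu^1\lambda^0$ coefficient gives $t(2\beta+b) = 0$; since $2b \notin \Delta$ forces $2\beta+b \neq 0$, we deduce $t = 0$, and the remaining $\lambda$ and constant terms determine $r, s$ uniquely to match case (2).

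For $m = 2$, write $f_{\beta,\gamma} = A(\lambda) + B(\lambda)\partial + C\partial^2$ with $C \neq 0$ (whence $\beta \neq 0$ by Lemma \ref{le1}). The $\partial^2$-match holds automatically via Lemma \ref{le3}. Matching $\partial^1$-coefficients and solving the resulting polynomial identity determines the linear polynomial $B(\lambda)$ in terms of $C, q_{\beta+\gamma}, m_{\beta+\gamma}, \varphi(\beta)$. Passing to $\partial^0$, with $A(\lambda) = r + s\lambda + t\lambda^2$, the $\mu^0\lambda^2$ coefficient yields $t(\beta+b) = b C q_{\beta+\gamma}$, while the $\mu^1\lambda$ coefficient (after substituting the value of $B$ just found) yields $t(2\beta+b)(\beta+b) = b^2 C q_{\beta+\gamma}(1 + 2q_{\beta+\gamma})$. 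Eliminating $t$ between these two relations produces $q_{\beta+\gamma}\bigl(q_{\beta+\gamma} - \beta/b\bigr) = 0$, forcing the dichotomy $q_{\beta+\gamma} \in \{0, \beta/b\}$. The branch $q_{\beta+\gamma} = 0$ gives case (3) and the branch $q_{\beta+\gamma} = \beta/b$ case (4); in each subcase the remaining coefficients $r, s$ of $A$ are pinned down by the still-unused $\mu^0$ and $\mu^1$ scalar coefficients, and the resulting quadratic polynomial rearranges into the stated factored form.

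The main obstacle is the $m = 2$ case: the polynomial identity (\ref{i5}) becomes an elaborate equality in $(\lambda,\mu,\partial)$, and matching enough coefficients to uncover the quadratic constraint $q_{\beta+\gamma}(q_{\beta+\gamma} - \beta/b) = 0$ is the decisive step that bifurcates the classification. Recasting the explicit quadratic polynomial $A(\lambda) + B(\lambda)\partial + C\partial^2$ into the clean factored expressions displayed in cases (3) and (4) is routine but requires some algebraic rearrangement.
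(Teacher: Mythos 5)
Your proposal is correct and takes essentially the same route as the paper's proof: both use Lemma \ref{lem1} to reduce to a total-degree-at-most-two ansatz, substitute into (\ref{i5}), compare coefficients with the help of Lemmas \ref{le2} and \ref{le3}, and in the case of a nonzero $\partial^2$-term derive precisely the relations $t(\beta+b)=bCq_{\beta+\gamma}$ and $t(2\beta+b)(\beta+b)=b^{2}Cq_{\beta+\gamma}(1+2q_{\beta+\gamma})$ (combinations of the paper's (\ref{eqss1})--(\ref{eqss3})), which force $q_{\beta+\gamma}\in\{0,\beta/b\}$ and hence cases (3) and (4), while the constant and linear cases give (1) and (2); the only difference is organizational, splitting by the $\partial$-degree $m$ rather than by $a_3\neq 0$ versus $a_3=0$. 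One cosmetic slip: in your $m=1$ case the identity $t(2\beta+b)=0$ comes from the $\lambda^{2}\mu$-coefficient (the $\mu\lambda^{0}$-coefficient is automatically satisfied), but the conclusion $t=0$ and the rest of the argument stand.
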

\begin{proof}
By Lemma \ref{lem1}, we only need to discuss the case when the degree of $d(\lambda)$ is smaller than 3 in (\ref{i7}).
Therefore, we can set $f_{\beta,\gamma}(\lambda,\partial)=a_1\lambda^2+a_2\lambda\partial+a_3\partial^2+a_4\lambda+a_5\partial+a_6$, where $a_i\in \mathbb{C}$ for $i=1$, $\cdots$, $6$. Taking it into (\ref{i5}) and by comparing the coefficients of $\lambda^3$, $\lambda^2\mu$, $\lambda^2\partial$, $\lambda^2$, $\lambda\mu$, $\lambda\partial$ and $\lambda$, we can obtain the following equalities:
\begin{eqnarray}
\label{eqss1}a_1(\beta+b)=ba_3q_{\beta+\gamma},\\
\label{eqss2}2(\beta+b)a_1-a_1b=ba_2q_{\beta+\gamma},\\
\label{eqss3}a_2(\beta+b)=ba_3+2ba_3q_{\beta+\gamma},\\
\label{eqss4}a_4(\beta+b)-a_1\varphi(\beta)=ba_3m_{\beta+\gamma}+ba_5q_{\beta+\gamma},\\
\label{eqss5}(\beta+b)a_4-ba_4-2a_1\varphi(\beta)=a_2bm_{\beta+\gamma}+ba_4q_{\beta+\gamma}-ba_4q_\gamma,\\
\label{eqss6}a_5(\beta+b)-\varphi(\beta)a_2=2ba_3m_{\beta+\gamma}+ba_5+ba_5(q_{\beta+\gamma}-q_\gamma),\\
\label{eqss7}a_6(\beta+b)-\varphi(\beta)a_4=ba_5m_{\beta+\gamma}+ba_6(q_{\beta+\gamma}-q_\gamma).
\end{eqnarray}
Note that  the coefficients of other terms in (\ref{i5}) are naturally equal.

Suppose $a_3\neq 0$. By Lemma \ref{le3}, $q_{\beta+\gamma}-q_\gamma=-1+\frac{\beta}{b}$. According to (\ref{eqss1})-(\ref{eqss3}), we get
\begin{eqnarray}
\label{eqss8}\frac{b}{2\beta+b}a_2q_{\beta+\gamma}&=&\frac{b}{\beta+b}q_{\beta+\gamma}a_3,\\
\label{eqss9}a_2 &=& \frac{b(1+2q_{\beta+\gamma})}{\beta+b}a_3.
\end{eqnarray}
By (\ref{eqss8}) and (\ref{eqss9}), we get $\frac{b}{2\beta+b}(1+2q_{\beta+\gamma})q_{\beta+\gamma}=q_{\beta+\gamma}$. Therefore, $q_{\beta+\gamma}=0$ or $q_{\beta+\gamma}=\frac{\beta}{b}$ where $\beta\neq 0$. When $q_{\beta+\gamma}=0$, by (\ref{eqss1}) and (\ref{eqss3})-(\ref{eqss7}), we can directly obtain that $a_1=0$, $a_2=\frac{b}{\beta+b}a_3$,
$a_4=\frac{b}{\beta+b}m_{\beta+\gamma}a_3$, $a_5=(\frac{\varphi(\beta)}{\beta+b}+2m_{\beta+\gamma})a_3$,
and $a_6=\frac{\varphi(\beta)}{2b}a_4+\frac{a_5}{2}m_{\beta+\gamma}$. Therefore, in this case,
$f_{\beta,\gamma}(\lambda,\partial)=a_3(\partial+m_{\beta+\gamma})(\partial+\frac{b}{\beta+b}\lambda+m_{\beta+\gamma}+\frac{\varphi(\beta)}{\beta+b})$. This is Case (3).
Similarly, when $q_{\beta+\gamma}=\frac{\beta}{b}\neq 0$, by some simple computations, we can get Case (4).

When $a_3=0$, by (\ref{eqss1}) and (\ref{eqss2}), we get $a_1=a_2=0$. By (\ref{eqss4})-(\ref{eqss7}),
we can directly get Case (1) and Case (2).

Then this lemma can be obtained.
\end{proof}
\begin{lem}\label{lem3}
Suppose there exist some $\beta_0\neq 0$ and $\gamma_0\in \Delta$ such that $f_{\beta_0,\gamma_0}(\lambda,\partial)$ is of the form (\ref{h3}), then for any $\alpha\neq \beta_0$,
$f_{\alpha,\gamma_0}(\lambda,\partial)$ is of the form (\ref{h2}).
\end{lem}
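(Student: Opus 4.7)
The plan is to eliminate the candidate forms for $f_{\alpha,\gamma_0}$ listed in Lemma \ref{lem2} one at a time, relying mainly on the degree-shift identity $q_{\beta+\gamma}-q_\gamma=-m+\frac{\beta+b}{b}$ from Lemma \ref{le3}, where $m$ is the $\partial$-degree of $f_{\beta,\gamma}$. The case $\alpha=0$ is immediate: form (\ref{h2}) with $\beta=0$ reduces to $c_{0,\gamma_0}(\partial+q_{\gamma_0}\lambda+m_{\gamma_0})$, which matches the shape of $f_{0,\gamma_0}$ found in Lemma \ref{le1}. I will therefore assume $\alpha\neq 0$ throughout.

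Reading off constants from the hypothesis that $f_{\beta_0,\gamma_0}$ is of form (\ref{h3}) gives $q_{\beta_0+\gamma_0}=0$ and $q_{\gamma_0}=1-\frac{\beta_0}{b}$. A putative form (\ref{h3}) for $f_{\alpha,\gamma_0}$ would require $q_{\gamma_0}=1-\frac{\alpha}{b}$, forcing $\alpha=\beta_0$; a putative form (\ref{h4}) would require $q_{\gamma_0}=1$, forcing $\beta_0=0$. Both are excluded, so only (\ref{h1}) and (\ref{h2}) survive, and the actual work is ruling out (\ref{h1}).

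Assume toward contradiction that $f_{\alpha,\gamma_0}$ is a nonzero constant (so its $\partial$-degree is $0$); then $q_{\alpha+\gamma_0}=q_{\gamma_0}+\frac{\alpha}{b}+1=2+\frac{\alpha-\beta_0}{b}$. I will compute $q_{\alpha+\beta_0+\gamma_0}$ in two independent ways and exhibit a numerical clash. First, $f_{\alpha,\beta_0+\gamma_0}$ is nonzero by Proposition \ref{pp1}: since $q_{\beta_0+\gamma_0}=0$, form (\ref{h3}) for it would force $\alpha=b\notin\Delta$, and form (\ref{h4}) would force $0=1$; so this factor is of form (\ref{h1}) or (\ref{h2}), yielding $q_{\alpha+\beta_0+\gamma_0}\in\{\tfrac{\alpha}{b}+1,\,\tfrac{\alpha}{b}\}$. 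Second, $f_{\beta_0,\alpha+\gamma_0}$ is likewise nonzero, and forms (\ref{h3}) and (\ref{h4}) for it would force $\alpha=-b$ or $\alpha=\beta_0-b$; since $\Delta$ is a subgroup of $\mathbb{C}^+$ and $b\notin\Delta$ we also have $-b\notin\Delta$, ruling both out. Hence $q_{\alpha+\beta_0+\gamma_0}-q_{\alpha+\gamma_0}\in\{\tfrac{\beta_0}{b}+1,\,\tfrac{\beta_0}{b}\}$, giving $q_{\alpha+\beta_0+\gamma_0}\in\{3+\tfrac{\alpha}{b},\,2+\tfrac{\alpha}{b}\}$. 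The two candidate sets $\{\tfrac{\alpha}{b}+1,\tfrac{\alpha}{b}\}$ and $\{3+\tfrac{\alpha}{b},2+\tfrac{\alpha}{b}\}$ are disjoint, producing the desired contradiction.

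The main obstacle is to confirm that no arithmetic accident in the subgroup $\Delta$ resurrects a ruled-out form; this is handled uniformly by the single observation that $b\notin\Delta$ forces $-b\notin\Delta$. Once this is in hand, the argument reduces to bookkeeping with the four admissible values of $q_{\beta+\gamma}-q_\gamma$ in Lemma \ref{le3}, with no further input from the full Jacobi relation (\ref{i1}) beyond what is already encoded in Lemmas \ref{le3} and \ref{lem2}.
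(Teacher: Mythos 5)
Your proof is correct, and your exclusion of forms (\ref{h3}) and (\ref{h4}) for $f_{\alpha,\gamma_0}$ is exactly the paper's argument: $q_{\gamma_0}=1-\frac{\beta_0}{b}$ is incompatible with the requirement $q_{\gamma_0}=1-\frac{\alpha}{b}$ unless $\alpha=\beta_0$, and incompatible with $q_{\gamma_0}=1$ since $\beta_0\neq 0$. Where you diverge is in ruling out (\ref{h1}). The paper stays at the product index $\beta_0+\gamma_0$: writing $\beta_0=\alpha+\beta$ with $\beta=\beta_0-\alpha\neq 0$, the hypothesis $q_{\alpha+\gamma_0}-q_{\gamma_0}=1+\frac{\alpha}{b}$ together with $q_{\beta_0+\gamma_0}=0$ forces $q_{\beta_0+\gamma_0}-q_{\alpha+\gamma_0}=\frac{\beta}{b}-2$, which lies outside the set $\{\frac{\beta}{b}+1,\frac{\beta}{b},\frac{\beta}{b}-1\}$ of shifts allowed by Lemma \ref{lem2} (equivalently Lemma \ref{le3} with $m\leq 2$), so $f_{\beta,\alpha+\gamma_0}$, nonzero by Proposition \ref{pp1}, can match no admissible form --- a one-step contradiction. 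You instead go up to the index $\alpha+\beta_0+\gamma_0$ and compute $q_{\alpha+\beta_0+\gamma_0}$ along two factorizations, which obliges you first to exclude (\ref{h3}) and (\ref{h4}) for the two auxiliary constants $f_{\alpha,\beta_0+\gamma_0}$ and $f_{\beta_0,\alpha+\gamma_0}$ (using $b\notin\Delta$, hence $-b\notin\Delta$ and $\alpha-\beta_0\neq -b$) before the candidate sets $\{\frac{\alpha}{b},\frac{\alpha}{b}+1\}$ and $\{\frac{\alpha}{b}+2,\frac{\alpha}{b}+3\}$ can be seen to be disjoint. Both arguments rest on the same inputs (Lemmas \ref{le1}, \ref{le3}, \ref{lem2} and Proposition \ref{pp1}); the paper's is shorter because the single shift $\frac{\beta}{b}-2$ is already inadmissible on its own, while yours trades that observation for a symmetric two-path comparison plus the extra subgroup bookkeeping. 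Your explicit note that the case $\alpha=0$ is covered by Lemma \ref{le1} (form (\ref{h2}) with $\beta=0$) is a small point the paper leaves implicit, and it is correct.
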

\begin{proof}
Since $f_{\beta_0,\gamma_0}(\lambda,\partial)$ is of the form (\ref{h3}), we get
$q_{\beta_0+\gamma_0}=0$ and $q_{\gamma_0}=1-\frac{\beta_0}{b}$.

Suppose that there exists some $\beta\neq \beta_0$ such that $f_{\beta,\gamma_0}(\lambda,\partial)$ is of the form (\ref{h3}). Then $q_{\beta+\gamma_0}=0$ and $q_{\gamma_0}=1-\frac{\beta}{b}$. Therefore, $\beta=\beta_0$. We get a contradiction. Therefore, there does not exist any $\beta\neq \beta_0$ such that $f_{\beta,\gamma_0}(\lambda,\partial)$ is of the form (\ref{h3}).

Similarly, for  any $\beta\neq \beta_0$, $f_{\beta,\gamma_0}(\lambda,\partial)$ cannot be of the form (\ref{h4}).

Suppose that there exists some $\alpha$ such that $f_{\alpha,\gamma_0}(\lambda,\partial)$ is of the form (\ref{h1}). Set $\alpha+\beta=\beta_0$ and $\gamma=\gamma_0$ in (\ref{i1}). Therefore we can obtain
$q_{\alpha+\gamma_0}-q_{\gamma_0}=1+\frac{\alpha}{b}$. Then $q_{\alpha+\beta+\gamma}-q_{\alpha+\gamma}
=-2+\frac{\beta}{b}$. Thus, $f_{\beta,\alpha+\gamma}(\lambda,\partial)$ is not one of the forms in Lemma \ref{lem2}. We also get a contradiction.

This lemma follows by the above discussion.
\end{proof}
\begin{lem}\label{lem4}
Suppose there exist some $\beta_0\neq 0$ and $\gamma_0\in \Delta$ such that $f_{\beta_0,\gamma_0}(\lambda,\partial)$ is of the form (\ref{h3}), then for any $\beta\neq 0$, $\gamma\in \Delta$ satisfying $\beta+\gamma=\beta_0+\gamma_0$,
$f_{\beta,\gamma}(\lambda,\partial)$ is of the form (\ref{h3}).
\end{lem}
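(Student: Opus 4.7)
The plan is to deduce the form of $f_{\beta,\gamma}(\lambda,\partial)$ by combining Lemma \ref{lem3} with the rigidity of the four normal forms classified in Lemma \ref{lem2}. Note first that the case $\beta = \beta_0$ forces $\gamma = \gamma_0$, for which there is nothing to prove, so I may assume $\beta \neq \beta_0$ (and $\beta \neq 0$ by hypothesis).

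The key step is to set $\alpha := \beta_0 - \beta$. Since $\beta \neq \beta_0$ and $\beta \neq 0$, one has both $\alpha \neq 0$ and $\alpha \neq \beta_0$. Lemma \ref{lem3} then applies to give that $f_{\alpha,\gamma_0}(\lambda,\partial)$ is of form (\ref{h2}), which in particular yields the relation $q_{\alpha+\gamma_0} - q_{\gamma_0} = \alpha/b$. Using the hypothesis $q_{\gamma_0} = 1 - \beta_0/b$ (coming from form (\ref{h3}) for $f_{\beta_0,\gamma_0}$) together with the identity $\alpha + \gamma_0 = \gamma$, this rewrites as $q_\gamma = 1 - \beta/b$. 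Moreover, $q_{\beta+\gamma} = q_{\beta_0 + \gamma_0} = 0$ by the very hypothesis that $f_{\beta_0,\gamma_0}$ is of form (\ref{h3}).

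It remains to verify that the pair $(q_{\beta+\gamma}, q_\gamma) = (0,\, 1 - \beta/b)$ together with the constraint $\beta \neq 0$ uniquely selects form (\ref{h3}) among the four alternatives of Lemma \ref{lem2}. Form (\ref{h1}) would demand $q_{\beta+\gamma} - q_\gamma = 1 + \beta/b$, whereas we have $\beta/b - 1$, an impossibility. Form (\ref{h2}) would demand $q_{\beta+\gamma} - q_\gamma = \beta/b$, equally incompatible. Form (\ref{h4}) would demand $q_{\beta+\gamma} = \beta/b$, which since $\beta \neq 0$ conflicts with $q_{\beta+\gamma} = 0$. Hence $f_{\beta,\gamma}$ must be of form (\ref{h3}), as claimed. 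The main (rather mild) obstacle is simply the observation that the two scalars $q_{\beta+\gamma}$ and $q_\gamma$ already carry enough information to rigidly identify the normal form, so no further use of the cocycle identity (\ref{i1}) beyond what Lemma \ref{lem3} already encoded is required.
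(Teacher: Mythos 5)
Your proposal is correct and follows essentially the same route as the paper: apply Lemma \ref{lem3} to $f_{\beta_0-\beta,\gamma_0}$ to get form (\ref{h2}) and hence $q_\gamma=1-\beta/b$, combine with $q_{\beta+\gamma}=q_{\beta_0+\gamma_0}=0$, and conclude via the classification in Lemma \ref{lem2}. Your explicit elimination of forms (\ref{h1}), (\ref{h2}), (\ref{h4}) just spells out what the paper leaves implicit.
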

\begin{proof}
Set $\alpha+\beta=\beta_0$ and $\gamma=\gamma_0$ in (\ref{i1}) with $\beta\neq 0$. Therefore we can obtain
$q_{\alpha+\beta+\gamma}-q_{\gamma}=-1+\frac{\alpha+\beta}{b}$ and $q_{\alpha+\beta+\gamma}=0$. By Lemma \ref{lem3}, $f_{\alpha,\gamma}(\lambda,\partial)$ is of the form (\ref{h2}). Therefore,
$q_{\alpha+\gamma}-q_\gamma=\frac{\alpha}{b}$. Thus, $q_{\alpha+\beta+\gamma}-q_{\alpha+\gamma}=-1+\frac{\beta}{b}$ and $q_{\alpha+\beta+\gamma}=0$. According to Lemma \ref{lem2},
$f_{\beta,\alpha+\gamma}(\lambda,\partial)$ is of the form (\ref{h3}). Since $\beta$ can be any non-zero element in $\Delta$, we get this lemma.
\end{proof}
\begin{lem}\label{lem5}
Suppose there exist some $\beta_0\neq 0$ and $\gamma_0\in \Delta$ such that $f_{\beta_0,\gamma_0}(\lambda,\partial)$ is of the form (\ref{h3}), then for any $\beta$, $\gamma\in \Delta$ satisfying $\gamma\neq \beta_0+\gamma_0$ and $\beta+\gamma\neq \beta_0+\gamma_0$,
$f_{\beta,\gamma}(\lambda,\partial)$ is of the form (\ref{h2}).
\end{lem}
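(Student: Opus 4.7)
The plan is to exploit the preceding two lemmas to pin down the scalar $q_\delta$ for every $\delta\in\Delta$, and then simply read off which of the four shapes listed in Lemma \ref{lem2} is compatible with those values. No fresh analysis of the cocycle-like identity (\ref{i1}) should be needed.

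First I would unpack the hypothesis. Since $f_{\beta_0,\gamma_0}$ is of the form (\ref{h3}), we have $q_{\beta_0+\gamma_0}=0$ and $q_{\gamma_0}=1-\beta_0/b$. Lemma \ref{lem3} then tells us that for every $\alpha\in\Delta$ with $\alpha\neq\beta_0$, the map $f_{\alpha,\gamma_0}$ is of form (\ref{h2}), so $q_{\alpha+\gamma_0}-q_{\gamma_0}=\alpha/b$. Re-indexing by $\delta=\alpha+\gamma_0$ (which, since $\Delta$ is a subgroup, sweeps out all of $\Delta\setminus\{\beta_0+\gamma_0\}$), this gives the closed formula
\[
q_\delta \;=\; 1 + \frac{\delta-\beta_0-\gamma_0}{b}, \qquad \delta\in\Delta,\ \delta\neq\beta_0+\gamma_0,
\]
together with the exceptional value $q_{\beta_0+\gamma_0}=0$.

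Now fix $\beta,\gamma\in\Delta$ satisfying $\gamma\neq\beta_0+\gamma_0$ and $\beta+\gamma\neq\beta_0+\gamma_0$. Substituting the closed formula into both $q_\gamma$ and $q_{\beta+\gamma}$ produces $q_{\beta+\gamma}-q_\gamma=\beta/b$. I would then run through the four cases of Lemma \ref{lem2}. Form (\ref{h1}) demands that this difference equal $\beta/b+1$, which is impossible. Form (\ref{h4}) demands $q_\gamma=1$, which by the closed formula would force $\gamma=\beta_0+\gamma_0$ and contradict the exclusion hypothesis. Form (\ref{h3}) demands $q_{\beta+\gamma}=0$, which by the closed formula forces $\beta+\gamma=\beta_0+\gamma_0-b$; this is impossible in $\Delta$ because $b\notin\Delta$ (which follows from the standing assumption $2b\notin\Delta$ together with $\Delta$ being a subgroup, since $b\in\Delta$ would imply $2b=b+b\in\Delta$). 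The only surviving option is (\ref{h2}), which is the desired conclusion.

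The only delicate point is the use of $b\notin\Delta$ to eliminate case (\ref{h3}); the eliminations of (\ref{h1}) and (\ref{h4}) follow immediately from the $q$-relations recorded in Lemma \ref{lem2} together with the $q$-formula derived above. Thus the main obstacle is really logistical: ensuring that Lemma \ref{lem3} is applicable at $(\alpha,\gamma_0)$ for every $\alpha\neq\beta_0$, so that the $q$-sequence is determined on all of $\Delta\setminus\{\beta_0+\gamma_0\}$; once that is in hand the classification is a one-line case check.
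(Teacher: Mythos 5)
Your argument is correct, but it takes a different route from the paper's. The paper disposes of this lemma by combining Lemma \ref{lem3} with Lemma \ref{lem4}: for any $\gamma\neq\beta_0+\gamma_0$ one sets $\beta'=\beta_0+\gamma_0-\gamma\neq 0$, so Lemma \ref{lem4} says $f_{\beta',\gamma}$ is again of the form (\ref{h3}), and then Lemma \ref{lem3} applied at the transported base point $(\beta',\gamma)$ gives that $f_{\beta,\gamma}$ is of the form (\ref{h2}) for every $\beta$ with $\beta\neq\beta'$, i.e.\ with $\beta+\gamma\neq\beta_0+\gamma_0$. You never invoke Lemma \ref{lem4}; instead you use Lemma \ref{lem3} at the original base point to get the closed formula $q_\delta=1+\frac{\delta-\beta_0-\gamma_0}{b}$ on $\Delta\setminus\{\beta_0+\gamma_0\}$ (with $q_{\beta_0+\gamma_0}=0$), and then run through the four shapes of Lemma \ref{lem2}, killing (\ref{h1}) by the mismatch $q_{\beta+\gamma}-q_\gamma=\frac{\beta}{b}\neq\frac{\beta}{b}+1$, (\ref{h4}) because $q_\gamma=1$ would force $\gamma=\beta_0+\gamma_0$, and (\ref{h3}) because $q_{\beta+\gamma}=0$ would force $b\in\Delta$, contrary to $2b\notin\Delta$. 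All of these steps check out (in fact for (\ref{h3}) you could avoid the appeal to $b\notin\Delta$ altogether by using the other constraint $q_\gamma=1-\frac{\beta}{b}$, which directly forces the excluded equality $\beta+\gamma=\beta_0+\gamma_0$). What your approach buys is independence from Lemma \ref{lem4} and an explicit description of the whole sequence $\{q_\delta\}$, which is essentially the sequence $A_1$ used later; what the paper's approach buys is brevity, since the base-point transport makes the statement an immediate corollary of the two preceding lemmas without any further case analysis.
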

\begin{proof}
It can be directly obtained by Lemma \ref{lem3} and Lemma \ref{lem4}.
\end{proof}
\begin{lem}\label{lem6}
Suppose there exist some $\beta_0\neq 0$ and $\gamma_0\in \Delta$ such that $f_{\beta_0,\gamma_0}(\lambda,\partial)$ is of the form (\ref{h3}),  then for
any $\alpha\neq 0$,  $f_{\alpha,\beta_0+\gamma_0}(\lambda,\partial)$ is of the form (\ref{h1}).
\end{lem}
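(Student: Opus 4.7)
Set $\delta=\beta_0+\gamma_0$. The plan is to show, via Lemma~\ref{lem2}, that $f_{\alpha,\delta}(\lambda,\partial)$ can only be of the form (\ref{h1}) or (\ref{h2}), and then to eliminate (\ref{h2}) by a total-degree count inside the Jacobi identity (\ref{i1}). First I would read off from the hypothesis that $f_{\beta_0,\gamma_0}$ is of form (\ref{h3}) the equality $q_\delta=q_{\beta_0+\gamma_0}=0$. Running through the four possibilities in Lemma~\ref{lem2} for $f_{\alpha,\delta}$: form (\ref{h3}) requires $q_\delta=1-\alpha/b$, which would force $\alpha=b$, impossible because $b\notin\Delta$; form (\ref{h4}) requires $q_\delta=1\neq 0$, also impossible. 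Hence $f_{\alpha,\delta}$ must be of form (\ref{h1}) or (\ref{h2}).

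To rule out (\ref{h2}), since $\Delta$ is an infinite additive subgroup of $\mathbb{C}$, I would pick $\beta\in\Delta$ with $\beta\neq 0$ and $\beta\neq\alpha$, set $\gamma=\delta-\beta$, and apply the Jacobi identity (\ref{i1}) to the triple $(\alpha,\beta,\gamma)$. Then $f_{\beta,\gamma}$ is of form (\ref{h3}) by Lemma~\ref{lem4} (since $\beta\neq 0$ and $\beta+\gamma=\delta$), while the other three factors $f_{\alpha,\gamma}$, $f_{\beta,\alpha+\gamma}$, $f_{\alpha+\beta,\gamma}$ are of form (\ref{h2}) by Lemma~\ref{lem5} (or, in the boundary case $\alpha+\beta=0$, by Lemma~\ref{le1}), the required non-degeneracy conditions $\gamma\neq\delta$, $\alpha+\gamma\neq\delta$, $\alpha+\beta+\gamma\neq\delta$ all following from the choices $\beta\neq 0,\alpha$ and from $\alpha\neq 0$.

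The key step is then a total-degree comparison in $(\lambda,\mu,\partial)$: the LHS of (\ref{i1}) and the second RHS term are each a product of a linear coefficient with a form-(\ref{h2}) factor, hence of degree at most~$2$, while under the assumption that $f_{\alpha,\delta}$ is of form (\ref{h2}), the first RHS term $f_{\beta,\gamma}(\mu,\lambda+\partial)\,f_{\alpha,\delta}(\lambda,\partial)$ is a product of the quadratic form-(\ref{h3}) factor with a linear factor, so has degree~$3$. The main obstacle I anticipate is confirming that this degree-$3$ leading term is genuinely nonzero; this reduces to checking that the three linear forms $(\lambda+\partial)$, $(\lambda+\partial+\tfrac{b}{\beta+b}\mu)$, $(\tfrac{b}{\alpha+b}q_{\alpha+\delta}\lambda+\partial)$ multiply to a nonzero polynomial in $(\lambda,\mu,\partial)$, and the last factor is nonzero because its $\partial$-coefficient equals~$1$. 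This produces a degree-$3$ contribution on one side of (\ref{i1}) with nothing to balance it, forcing $f_{\alpha,\delta}$ to be of form (\ref{h1}).
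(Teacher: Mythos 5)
Your argument is correct, but it reaches the conclusion by a different mechanism than the paper. You first use $q_{\beta_0+\gamma_0}=0$ to exclude the forms (\ref{h3}) and (\ref{h4}) for $f_{\alpha,\beta_0+\gamma_0}$ (the exclusions via $\alpha=b\notin\Delta$ and $q_{\beta_0+\gamma_0}=1$ are both sound), and then you kill (\ref{h2}) by instantiating the Jacobi identity (\ref{i1}) with an auxiliary $\beta\neq 0,\alpha$ and $\gamma=\beta_0+\gamma_0-\beta$, comparing total degrees: Lemma \ref{lem4} makes $f_{\beta,\gamma}$ quadratic, Lemma \ref{lem5} (and Lemma \ref{le1} when $\alpha+\beta=0$) makes the remaining factors linear, and the nonvanishing of the cubic leading term, which you verify, gives the contradiction. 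The paper's own proof is a two-line $q$-bookkeeping instead: by Lemma \ref{lem3}, $f_{\alpha+\beta_0,\gamma_0}$ is of form (\ref{h2}), so $q_{\alpha+\beta_0+\gamma_0}-q_{\gamma_0}=\frac{\alpha+\beta_0}{b}$; combining with $q_{\beta_0+\gamma_0}=0$ and $q_{\gamma_0}=1-\frac{\beta_0}{b}$ yields $q_{\alpha+\beta_0+\gamma_0}-q_{\beta_0+\gamma_0}=1+\frac{\alpha}{b}$, and this $q$-difference is compatible only with (\ref{h1}) in the catalogue of Lemma \ref{lem2}, eliminating (\ref{h2}), (\ref{h3}) and (\ref{h4}) simultaneously without any polynomial degree estimate. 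Your route costs an extra instance of (\ref{i1}), the boundary case $\alpha+\beta=0$, and a leading-coefficient check, but it is fully rigorous and has the small merit of making the obstruction to (\ref{h2}) visible as an unbalanced degree-$3$ term rather than as an arithmetic mismatch of the $q$-parameters.
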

\begin{proof}
Suppose that there exists some $\beta_0\neq 0$ and $\gamma_0\in \Delta$ such that $f_{\beta_0,\gamma_0}(\lambda,\partial)$ is of the form (\ref{h3}). By Lemma \ref{lem3}, for
any $\alpha\neq 0$, $f_{\alpha+\beta_0,\gamma_0}(\lambda,\partial)$ is of the form (\ref{h2}).
Set $\beta=\beta_0$ and $\gamma=\gamma_0$ in (\ref{i1}).
Since $q_{\alpha+\beta+\gamma}-q_\gamma=\frac{\alpha+\beta}{b}$ and
$q_{\beta+\gamma}=0$, $q_\gamma=1-\frac{\beta}{b}$, we get $q_{\alpha+\beta+\gamma}-q_{\beta+\gamma}
=1+\frac{\alpha}{b}$. Therefore, $f_{\alpha,\beta+\gamma}(\lambda,\partial)$ is of the form (\ref{h1}).
\end{proof}

\begin{notation}
Let $A_1=\{q_\alpha|\alpha\in \Delta\}$ be a sequence  satisfying the following conditions:
\begin{itemize}
\item[(1)] There exists some $\gamma_0\in \Delta$ such that $q_{\gamma_0}=0$;
\item[(2)] $q_{\alpha+\gamma_0}=1+\frac{\alpha}{b} $ with any $\alpha\neq 0$.
\end{itemize}
\end{notation}

\begin{pro}\label{lem7}
Suppose that there exist some $\beta_0\neq 0$ and $\gamma_0\in \Delta$ such that $f_{\beta_0,\gamma_0}(\lambda,\partial)$ is of the form (\ref{h3}). The  module action of
$CL(b,\varphi)$ on $V$ corresponds to the sequence $A_1$ and is given as follows:
\\
${x_{\beta}}_\lambda M_\gamma  =$
\begin{eqnarray*}
 \begin{cases}
c_{\beta,\gamma}(\partial+c-\frac{\varphi(\beta+\gamma)}{b})(\partial+\frac{b}{\beta+b}\lambda+c-\frac{\varphi(\beta+\gamma)}{b} +\frac{\varphi(\beta)}{\beta+b})M_{\beta+\gamma}, &  \text{if $\beta\neq 0$, $q_{\beta+\gamma}=0$, $q_\gamma=1-\frac{\beta}{b}$,}  \\
c_{\beta,\gamma}M_{\beta+\gamma}, &\text{if $\beta\neq 0$, $q_\gamma=0$,\;$q_{\beta+\gamma}=1+\frac{\beta}{b}$},\\
c_{\beta,\gamma}(\partial+\frac{b}{\beta+b}q_{\beta+\gamma}\lambda+\frac{\varphi(\beta)}{\beta+b}q_{\beta+\gamma}
+c-\frac{\varphi(\beta+\gamma)}{b})M_{\beta+\gamma}& \text{otherwise},
\end{cases}
\end{eqnarray*}
where $c\in \mathbb{C}$, and
\begin{eqnarray}
 c_{\alpha+\beta,\gamma}=\frac{\alpha +
\beta+b}{(\alpha+b)(\beta+b)}c_{\beta,\gamma}c_{\alpha,\beta+\gamma}.
\label{coes}
\end{eqnarray}

It is clear that, $c_{\beta,\gamma}=\beta+b$ satisfies (\ref{coes}).
We denote  by $V_{A_1,c}$ the corresponding module.
\end{pro}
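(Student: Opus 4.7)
The plan is to translate the form-classification from Lemmas \mref{lem3}--\mref{lem6} into the three explicit formulas for ${x_\beta}_\lambda M_\gamma$, and then extract the scalar compatibility on the $c_{\beta,\gamma}$ from the Jacobi-type identity (\mref{i1}). First I would set $\delta := \beta_0 + \gamma_0$; by Lemmas \mref{lem3}--\mref{lem6} the form of $f_{\beta,\gamma}(\lambda,\partial)$ depends only on whether $\beta+\gamma=\delta$, $\gamma=\delta$, or neither. Concretely: if $\beta\neq 0$ and $\beta+\gamma=\delta$ it takes the shape (\mref{h3}); if $\beta\neq 0$ and $\gamma=\delta$ it takes the shape (\mref{h1}); otherwise it takes the shape (\mref{h2}). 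The same lemmas force $q_\delta=0$ and $q_{\alpha+\delta}=1+\alpha/b$ for every $\alpha\neq 0$, which is precisely the defining data of the sequence $A_1$.

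Next, Lemma \mref{le2} gives $m_\gamma = c - \varphi(\gamma)/b$ for a universal constant $c\in\mathbb{C}$; plugging this in together with the appropriate $(q_\gamma, q_{\beta+\gamma})$ values from the preceding paragraph reduces (\mref{h1}), (\mref{h2}), (\mref{h3}) verbatim to the three case formulas listed in the proposition. This step is routine bookkeeping and requires nothing beyond matching constants.

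The heart of the proof is deriving (\mref{coes}). I would apply (\mref{i1}) to a \emph{generic} triple $(\alpha,\beta,\gamma)$, i.e.\ one for which none of $\gamma, \beta+\gamma, \alpha+\gamma, \alpha+\beta+\gamma$ equals $\delta$; since $\Delta$ is infinite such triples exist, and all four $f$'s in (\mref{i1}) then have shape (\mref{h2}). Substituting, both sides become polynomials in $\lambda,\mu,\partial$; using the arithmetic progression $q_{\alpha+\gamma}-q_\gamma = \alpha/b$ from Lemma \mref{le3}, valid throughout the generic stratum, the polynomial coefficients simplify dramatically, and comparing, say, the $\lambda\partial$-coefficients isolates a single scalar equation which rearranges to exactly (\mref{coes}). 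The main obstacle I anticipate is handling the \emph{boundary} triples where one or more of the four indices does coincide with $\delta$: then one or two of the $f$'s in (\mref{i1}) must be replaced by (\mref{h1}) or (\mref{h3}), and one has to verify by direct substitution that the resulting polynomial identities remain consistent with the same relation (\mref{coes}) rather than producing an independent constraint; this is a careful case-by-case check enabled by the fact that (\mref{h1}) and (\mref{h3}) are also proportional to a single scalar $c_{\beta,\gamma}$. Finally, the particular assignment $c_{\beta,\gamma}=\beta+b$ makes both sides of (\mref{coes}) equal to $(\alpha+\beta+b)(\alpha+b)(\beta+b)$, confirming that $V_{A_1,c}$ is a genuine module.
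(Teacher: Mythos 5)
Your proposal is correct and follows essentially the same route as the paper: invoke Lemmas \ref{lem3}--\ref{lem6} together with Lemma \ref{le2} to pin down the three explicit case formulas and the sequence $A_1$, then verify (\ref{i1}) case by case and extract the scalar relation (\ref{coes}). The only (inessential) difference is that you extract (\ref{coes}) from the generic triple where all four structure constants have the form (\ref{h2}) — where, note, the $\lambda\partial$-coefficients alone do not suffice and you also need the $\partial^2$-coefficient identity $c_{\beta,\gamma}c_{\alpha,\beta+\gamma}=c_{\alpha,\gamma}c_{\beta,\alpha+\gamma}$ — whereas the paper extracts it from the boundary case in which $f_{\alpha+\beta,\gamma}$ has the form (\ref{h1}).
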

\begin{proof}
By Lemmas \ref{lem3}-\ref{lem6} and \ref{le2}, we only need to prove that the action defined above is really a module action, i.e. to check that (\ref{i1}) holds in all cases. This can be checked case by case. Here, we only give a verification when $f_{\alpha+\beta,\gamma}(\lambda,\partial)$ in (\ref{i1}) is of the form (\ref{h1}) with $\alpha\neq 0$ and $\beta\neq 0$. Therefore, $q_\gamma=0$ and
$q_{\alpha+\beta+\gamma}=1+\frac{\alpha+\beta}{b}$. By Lemma \ref{lem6}, $f_{\alpha,\gamma}(\lambda,\partial)$ and $f_{\beta,\gamma}(\lambda,\partial)$ are of the form (\ref{h1}). Therefore, $f_{\alpha,\beta+\gamma}(\lambda,\partial)$ and $f_{\beta,\alpha+\gamma}(\lambda,\partial)$ are of the form (\ref{h2}). Taking them into (\ref{i1}), we get
\[
c_{\alpha+\beta,\gamma}=\frac{bq_{\alpha+\beta+\gamma}}{(\alpha+b)(\beta+b)}c_{\beta,\gamma}c_{\alpha,\beta+\gamma}.
\]
Since $q_{\alpha+\beta+\gamma}=1+\frac{\alpha+\beta}{b}$, we get $c_{\alpha+\beta,\gamma}=\frac{\alpha +
\beta+b}{(\alpha+b)(\beta+b)}c_{\beta,\gamma}c_{\alpha,\beta+\gamma}$.
\end{proof}

\begin{lem}\label{lem8}
Suppose there exist some $\beta_0\neq 0$, $\gamma_0\in \Delta$ such that $f_{\beta_0,\gamma_0}(\lambda,\partial)$ is of the form (\ref{h4}). Then we have\\
(1) $f_{\beta,\gamma_0}(\lambda,\partial)$ is of the form (\ref{h4}), when $\beta\neq 0$;\\
(2) $f_{\beta,\gamma}(\lambda,\partial)$ is of the form (\ref{h2}), when $\beta=0$, $\gamma=\gamma_0$ or $\beta+\gamma\neq \gamma_0$;\\
(3) $f_{\beta,\gamma}(\lambda,\partial)$ is of the form (\ref{h1}) when $\beta\neq 0$ and $\beta+\gamma=\gamma_0$.
\end{lem}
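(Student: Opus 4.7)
The plan is to show that the hypothesis on $f_{\beta_0,\gamma_0}$ rigidly fixes the whole sequence $\{q_\alpha\}_{\alpha\in\Delta}$, and then to determine the form of every $f_{\beta,\gamma}$ by matching its $q$-difference $q_{\beta+\gamma}-q_\gamma$ against the four possibilities classified in Lemma~\ref{lem2}. A useful preliminary observation is that, for $\beta\neq 0$, the admissible values of this difference are exactly $\beta/b+1$, $\beta/b$, and $\beta/b-1$, corresponding to forms (\ref{h1}), (\ref{h2}), and the pair (\ref{h3})/(\ref{h4}) respectively; the last pair is distinguished by comparing $q_\gamma$ with $1$ and $q_{\beta+\gamma}$ with $0$.

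The crux is part (1). From $f_{\beta_0,\gamma_0}$ of form (\ref{h4}) we read off $q_{\gamma_0}=1$ and $q_{\beta_0+\gamma_0}=\beta_0/b$. Fix $\beta\neq 0$; the case $\beta=\beta_0$ is the hypothesis, so assume $\beta\neq\beta_0$. Form (\ref{h3}) for $f_{\beta,\gamma_0}$ would require $q_{\gamma_0}=1-\beta/b$ and hence $\beta=0$, a contradiction. To rule out forms (\ref{h1}) and (\ref{h2}) I would substitute $\alpha=\beta_0-\beta$ and $\gamma=\gamma_0$ into (\ref{i1}), in the spirit of the proof of Lemma~\ref{lem3}, and examine the $q$-difference attached to $f_{\alpha,\beta+\gamma_0}$. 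If $f_{\beta,\gamma_0}$ has form (\ref{h1}) then $q_{\beta+\gamma_0}=2+\beta/b$ and this difference becomes $\beta_0/b-(2+\beta/b)=\alpha/b-2$, which is outside the admissible set. If $f_{\beta,\gamma_0}$ has form (\ref{h2}) then $q_{\beta+\gamma_0}=1+\beta/b$ and the difference equals $\alpha/b-1$; this is consistent only with (\ref{h3}) or (\ref{h4}) for $f_{\alpha,\beta+\gamma_0}$, but (\ref{h3}) forces $q_{\beta_0+\gamma_0}=0$ (so $\beta_0=0$) and (\ref{h4}) forces $q_{\beta+\gamma_0}=1$ (so $\beta=0$), each a contradiction. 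Hence $f_{\beta,\gamma_0}$ must be of form (\ref{h4}).

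Once (1) is established, every $q_\alpha$ is pinned down: $q_{\gamma_0}=1$ and $q_\alpha=(\alpha-\gamma_0)/b$ for $\alpha\neq\gamma_0$. For parts (2) and (3) I would take an arbitrary pair $(\beta,\gamma)$ and compute $q_{\beta+\gamma}-q_\gamma$. The case $\beta=0$ is immediate from Lemma~\ref{le1}, yielding form (\ref{h2}). For $\beta\neq 0$ with $\gamma\neq\gamma_0$ and $\beta+\gamma\neq\gamma_0$, the difference equals $\beta/b$, matching only form (\ref{h2}); this is part (2). For $\beta\neq 0$ with $\beta+\gamma=\gamma_0$ (so $\gamma\neq\gamma_0$), one has $q_{\beta+\gamma}=1$ and $q_\gamma=-\beta/b$, giving difference $\beta/b+1$, which singles out form (\ref{h1}); this is part (3).

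The main obstacle is the careful execution of part (1): one must pick the right substitution in the cocycle identity (\ref{i1}) and check, for each candidate form of $f_{\beta,\gamma_0}$, that the resulting $q$-difference violates Lemma~\ref{lem2}. Throughout the matching argument in (2) and (3), the standing hypothesis $b\notin\Delta$ (hence $\beta+b\neq 0$) blocks degenerate coincidences between the four forms, exactly as in Lemmas~\ref{lem3}--\ref{lem6}.
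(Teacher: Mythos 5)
Your proposal is correct and follows essentially the same route the paper intends: the paper's proof of this lemma is simply ``similar to the form (\ref{h3}) case'' (Lemmas \ref{lem3}--\ref{lem6}), and your argument is exactly that adaptation, using the $q$-difference constraints from Lemmas \ref{le3} and \ref{lem2} together with the nonvanishing of all structure constants (Proposition \ref{pp1}) to pin down the sequence $\{q_\alpha\}$ and then read off the form of each $f_{\beta,\gamma}$. In particular you correctly handle the one place where the analogy with the (\ref{h3}) case shifts, namely that here form (\ref{h2}) for $f_{\beta,\gamma_0}$ is also excluded (via the $q$-values forced on $f_{\beta_0-\beta,\beta+\gamma_0}$), so that (\ref{h4}) is forced for all $\beta\neq 0$.
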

\begin{proof}
The proof is similar to that in the case when there exist some $\beta_0\neq 0$, $\gamma_0\in \Delta$ such that $f_{\beta_0,\gamma_0}(\lambda,\partial)$ is of the form (\ref{h3}).
\end{proof}

\begin{notation}
Let $A_2=\{q_\alpha|\alpha\in \Delta\}$ be a sequence satisfying the following condition:
\begin{itemize}
\item[(1)] There exists some $\gamma_0\in \Delta$ such that $q_{\gamma_0}=0$;
\item[(2)] $q_{\beta+\gamma_0}=\frac{\beta}{b} $ with any $\beta\neq 0$.
\end{itemize}
\end{notation}

\begin{pro}\label{lem13}
Suppose that there exists some $\beta_0\neq 0$ and $\gamma_0\in \Delta$ such that $f_{\beta_0,\gamma_0}(\lambda,\partial)$ is of the form (\ref{h3}). The  module action of
$CL(b,\varphi)$ on $V$ corresponds to the sequence $A_2$ and is given as follows:
\begin{itemize}
\item[(i)] If $\beta\neq 0$, $q_{\beta+\gamma}=1$ and $q_{\gamma}=-\frac{\beta}{b}$, then
${x_{\beta}}_\lambda M_\gamma=c_{\beta,\gamma}M_{\beta+\gamma}$;
\item[(ii)] If $\beta\neq 0$, $q_{\beta+\gamma}=\frac{\beta}{b}$, $q_\gamma=1$, then
${x_{\beta}}_\lambda M_\gamma=c_{\beta,\gamma}((\partial+c-\frac{\varphi(\beta+\gamma)}{b})(\partial+\frac{2\beta+b}{\beta+b}\lambda+c-\frac{\varphi(\beta+\gamma)}{b} +\frac{(2\beta+b)\varphi(\beta)}{2(\beta+b)})+\frac{\beta}{\beta+b}(\lambda+\frac{\varphi(\beta)}{b})^2)M_{\beta+\gamma}$;
\item[(iii)] Otherwise, then ${x_{\beta}}_\lambda M_\gamma=c_{\beta,\gamma}(\partial+\frac{b}{\beta+b}q_{\beta+\gamma}\lambda+\frac{\varphi(\beta)}{\beta+b}q_{\beta+\gamma}
+c-\frac{\varphi(\beta+\gamma)}{b})M_{\beta+\gamma}$.
\end{itemize}
where $c\in \mathbb{C}$, and $c_{\beta,\gamma}=\beta+b$ satisfies (\ref{coes}).
We also denote the corresponding module by $V_{A_2,c}$.
\end{pro}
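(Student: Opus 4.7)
The structure of the proof will closely mirror that of Proposition \ref{lem7}, with Lemma \ref{lem8} playing the role that Lemmas \ref{lem3}--\ref{lem6} played there (the cited form in the hypothesis should be (\ref{h4}), so that the situation matches $A_2$ rather than $A_1$). Starting from the hypothesis that some $f_{\beta_0,\gamma_0}(\lambda,\partial)$ takes the form (\ref{h4}) with $\beta_0\neq 0$, Lemma \ref{lem8} immediately tells us which of the four admissible shapes of Lemma \ref{lem2} each $f_{\beta,\gamma}(\lambda,\partial)$ must take: form (\ref{h4}) when $\beta\neq 0$ and $\gamma=\gamma_0$, form (\ref{h1}) when $\beta\neq 0$ and $\beta+\gamma=\gamma_0$, and form (\ref{h2}) in the remaining (nontrivial) cases including $\beta=0$. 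Reading off the pairs $(q_{\beta+\gamma},q_\gamma)$ from the three relevant templates, one sees that the resulting assignment $\{q_\alpha\mid\alpha\in\Delta\}$ agrees, after normalizing $q_{\gamma_0}=0$, with the sequence $A_2$.

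Next I would feed the constraint $m_\beta=c-\varphi(\beta)/b$ from Lemma \ref{le2} (with $c=m_0$) together with the $q$-values dictated by $A_2$ into each of the three templates (\ref{h1}), (\ref{h2}), (\ref{h4}). This directly reproduces the three cases displayed in the statement: case (i) is (\ref{h1}) where $q_\gamma=-\beta/b$ and $q_{\beta+\gamma}=1$ so $q_{\beta+\gamma}-q_\gamma=1+\beta/b$; case (ii) is (\ref{h4}) with $q_\gamma=1$, $q_{\beta+\gamma}=\beta/b$; case (iii) is the generic (\ref{h2}) shape, with constant term and $\partial$-coefficient rewritten in terms of $c$ and $\varphi$.

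The substantive part is the verification that the proposed $\lambda$-action actually satisfies the compatibility (\ref{i1}) for every triple $(\alpha,\beta,\gamma)$. This splits into subcases indexed by which of the three templates the five polynomials $f_{\alpha+\beta,\gamma},f_{\beta,\gamma},f_{\alpha,\beta+\gamma},f_{\alpha,\gamma},f_{\beta,\alpha+\gamma}$ each take; the allowable combinations are tightly controlled by the position of $\gamma$ and $\beta+\gamma$ relative to $\gamma_0$. In every subcase, after clearing the polynomial factors in $\lambda,\mu,\partial$, the identity (\ref{i1}) reduces to a scalar cocycle relation on the $c_{\beta,\gamma}$ which coincides with (\ref{coes}). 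The main obstacle is the subcase in which $f_{\alpha+\beta,\gamma}$ itself has the form (\ref{h4}): both sides of (\ref{i1}) are then polynomials of degree two in $\partial$, and matching the coefficients requires a somewhat lengthy bookkeeping argument involving the $\varphi$-twists, which is where I expect the bulk of the computation to live. The remaining mixed subcases are degenerate in at least one factor and reduce essentially to the argument carried out already in Proposition \ref{lem7}.

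Finally, checking that $c_{\beta,\gamma}=\beta+b$ satisfies (\ref{coes}) is the direct algebraic identity $\alpha+\beta+b=\frac{\alpha+\beta+b}{(\alpha+b)(\beta+b)}(\alpha+b)(\beta+b)$, and this confirms that $V_{A_2,c}$ is a well-defined module, completing the proof.
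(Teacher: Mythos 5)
Your plan is essentially the paper's own proof: the paper disposes of this proposition with the single remark that the argument is ``similar to that in Proposition \ref{lem7}'', and what you describe --- Lemma \ref{lem8} playing the role of Lemmas \ref{lem3}--\ref{lem6}, Lemma \ref{le2} fixing the $m_\gamma$, a case-by-case verification of (\ref{i1}) that collapses to the scalar relation (\ref{coes}), and the trivial check that $c_{\beta,\gamma}=\beta+b$ satisfies (\ref{coes}) --- is exactly that argument written out; you are also right that the hypothesis ``(\ref{h3})'' in the statement is a misprint for (\ref{h4}). One caveat: the phrase ``after normalizing $q_{\gamma_0}=0$'' is not a legitimate step, because by Lemma \ref{le1} each $q_\gamma$ is determined by the $x_0$-action on $M_\gamma$ and is unchanged under rescaling the generators (only the $c_{\beta,\gamma}$ can be renormalized); what Lemma \ref{lem8} actually forces is $q_{\gamma_0}=1$ at the distinguished point and $q_{\beta+\gamma_0}=\frac{\beta}{b}$ for $\beta\neq 0$, which is precisely what cases (i) and (ii) of the statement encode, so condition (1) in the paper's Notation for $A_2$ is itself a misprint ($q_{\gamma_0}=1$, not $0$) and your proof should simply record these intrinsic values rather than attempt to shift them.
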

\begin{proof}
The proof is similar to that in Proposition \ref{lem7}.
\end{proof}

\begin{notation}
Let $A_3=\{q_\alpha|\alpha\in \Delta\}$ be a sequence satisfying
$q_{\beta+\gamma}-q_\gamma=\frac{\beta}{b}$ for any $\beta$, $\gamma\in \Delta$.
Setting $q_0=e$ for some $e\in \mathbb{C}$, then $q_\beta=e+\frac{\beta}{b}$.
\end{notation}

\begin{pro}\label{lem9}
Suppose that there does not exist some $\beta_0$, $\gamma_0\in \Delta$ such that $f_{\beta_0,\gamma_0}(\lambda,\partial)$ is of the form (\ref{h3}) or (\ref{h4}). Then the module action of
$CL(b,\varphi)$ on $V$ corresponds to $A_3$ and is given as follows:
\begin{eqnarray*}
{x_{\beta}}_\lambda M_\gamma=c_{\beta,\gamma}(c-\frac{\varphi(\beta+\gamma)}{b}+\frac{\varphi(\beta)}{\beta+b}(e+\frac{\beta+\gamma}{b})
+\frac{b}{\beta+b}(e+\frac{\beta+\gamma}{b})\lambda+\partial)M_{\beta+\gamma},
\end{eqnarray*}
where $c\in \mathbb{C}$, $e\in \mathbb{C}$ and $c_{\beta,\gamma}$ satisfies (\ref{coes}). We denote the corresponding module by
$V_{c,e}$ with $c_{\beta,\gamma}=\beta+b$.
\end{pro}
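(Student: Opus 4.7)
The plan is the following. By Lemma \ref{lem2}, for every $\beta, \gamma \in \Delta$ with $\beta \neq 0$, the polynomial $f_{\beta,\gamma}(\lambda,\partial)$ must take one of the four listed forms (\ref{h1})--(\ref{h4}), while Lemma \ref{le1} already forces $f_{0,\gamma}(\lambda,\partial) = b(\partial + q_\gamma \lambda + m_\gamma)$, i.e.\ the $\beta = 0$ instance of form (\ref{h2}). The standing hypothesis excludes forms (\ref{h3}) and (\ref{h4}) entirely, so for every $\beta \neq 0$ only form (\ref{h1}) or form (\ref{h2}) remains in play.

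The key conceptual step is to eliminate form (\ref{h1}) altogether. I would argue by contradiction: suppose $f_{\beta_0,\gamma_0}$ has form (\ref{h1}) for some $\beta_0 \neq 0$ and $\gamma_0 \in \Delta$, so that $q_{\beta_0+\gamma_0} - q_{\gamma_0} = \frac{\beta_0}{b} + 1$. Since $-\beta_0 \in \Delta \setminus \{0\}$, Proposition \ref{pp1} guarantees that $f_{-\beta_0,\beta_0+\gamma_0}$ is non-zero, and by the dichotomy above it is itself of form (\ref{h1}) or of form (\ref{h2}). The corresponding jump $q_{\gamma_0} - q_{\beta_0+\gamma_0}$ therefore equals either $-\frac{\beta_0}{b} + 1$ or $-\frac{\beta_0}{b}$, neither of which matches the required value $-\frac{\beta_0}{b} - 1$. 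This contradiction forces every $f_{\beta,\gamma}$ to be of form (\ref{h2}).

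It follows that $q_{\beta+\gamma} - q_\gamma = \frac{\beta}{b}$ for all $\beta, \gamma \in \Delta$; setting $e = q_0$ gives $q_\alpha = e + \frac{\alpha}{b}$, which is exactly the sequence $A_3$. Combined with Lemma \ref{le2} (so that $m_\gamma = c - \frac{\varphi(\gamma)}{b}$ with $c = m_0$), substituting these data into form (\ref{h2}) reproduces the explicit formula claimed for ${x_\beta}_\lambda M_\gamma$.

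Finally, I would verify that (\ref{i1}) reduces precisely to the scalar recursion (\ref{coes}) among the $c_{\beta,\gamma}$. After plugging the derived polynomial expressions back into (\ref{i1}) and using that $\varphi$ is a group homomorphism together with the identities $q_\alpha - q_\gamma = (\alpha - \gamma)/b$ and $m_\gamma = c - \varphi(\gamma)/b$, I expect the $\lambda,\mu,\partial$ dependence to cancel across both sides, leaving (\ref{coes}) as the single residual constraint. This polynomial identity is the main obstacle: it is mechanical but lengthy, and the careful bookkeeping of the higher-order cancellations between the two products on the right-hand side of (\ref{i1}) is the real work. Once it is settled, the choice $c_{\beta,\gamma} = \beta + b$ is verified by direct substitution.
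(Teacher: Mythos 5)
Your proposal is correct, and its overall skeleton matches the paper's: restrict to forms (\ref{h1})--(\ref{h2}) via Lemma \ref{lem2} and the hypothesis, force everything into form (\ref{h2}), substitute $q_\alpha=e+\frac{\alpha}{b}$ and $m_\alpha=c-\frac{\varphi(\alpha)}{b}$ (Lemma \ref{le2}), and then reduce (\ref{i1}) to the scalar condition (\ref{coes}). Where you genuinely differ is in how form (\ref{h1}) is eliminated. The paper first argues that not all $f_{\beta,\gamma}$ with $\beta\neq 0$ can be of form (\ref{h1}), so some $f_{\beta_0,\gamma_0}$ is of form (\ref{h2}), and then propagates form (\ref{h2}) to all pairs by writing $\beta_0=\alpha+\beta$ in (\ref{i1}) and comparing the resulting $q$-jumps. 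You instead kill any putative occurrence of (\ref{h1}) at $(\beta_0,\gamma_0)$ in one stroke: the reverse transition $f_{-\beta_0,\beta_0+\gamma_0}$ is nonzero by Proposition \ref{pp1} (non-triviality), so its $q$-jump must be $1-\frac{\beta_0}{b}$ or $-\frac{\beta_0}{b}$, whereas form (\ref{h1}) at $(\beta_0,\gamma_0)$ forces the jump $-1-\frac{\beta_0}{b}$, a value attainable only by the excluded forms (\ref{h3})--(\ref{h4}) (whose jump, by Lemma \ref{le3}, is $-1+\frac{\beta}{b}$). This is shorter and avoids the paper's preliminary ``not all (\ref{h1})'' step and the spreading argument; the paper's version, on the other hand, is the same mechanism it reuses in Lemmas \ref{lem3}--\ref{lem6}, so it keeps the section uniform. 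One caveat: like the paper, you leave the final step --- that plugging all-(\ref{h2}) data into (\ref{i1}) makes the $\lambda,\mu,\partial$ dependence cancel and leaves exactly (\ref{coes}), and that $c_{\beta,\gamma}=\beta+b$ solves it --- as an unperformed (though routine and correct) polynomial verification; to be self-contained you should carry out that expansion, since it is the only place the explicit coefficients of (\ref{h2}) are actually tested.
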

\begin{proof}
By Lemma \ref{lem7} and the assumption, for any fixed $\beta$, $\gamma\in \Delta$,
$f_{\beta,\gamma}(\lambda,\partial)$ is either of the form (\ref{h1}) or of the form (\ref{h2}).

Obviously, if all $f_{\beta,\gamma}(\lambda,\partial)$ are of the form (\ref{h1}), (\ref{i1}) cannot hold. Therefore, there exist some $\beta_0\neq 0$ and $\gamma_0\in \Delta$ such that $f_{\beta_0,\gamma_0}(\lambda,\partial)$ is of the form (\ref{h2}). Set $\alpha+\beta=\beta_0$ and
$\gamma=\gamma_0$ in (\ref{i1}). Note that $q_{\alpha+\beta+\gamma}-q_{\gamma}=\frac{\alpha+\beta}{b}$.
In (\ref{i1}), if $f_{\beta,\gamma}(\lambda,\partial)$ is of the form as in
(\ref{h1}), then $q_{\beta+\gamma}-q_\gamma=1+\frac{\beta}{b}$. Therefore, $q_{\alpha+\beta+\gamma}-q_\gamma
=\frac{\alpha}{b}-1$. By our assumption,
we get a contradiction. Therefore, $f_{\beta,\gamma}(\lambda,\partial)$ and $f_{\alpha,\beta+\gamma}(\lambda,\partial)$
must be of the form as in (\ref{h2})). With a similar discussion, $f_{\alpha,\gamma}(\lambda,\partial)$ and $f_{\beta,\alpha+\gamma}(\lambda,\partial)$ are also
of the form as in (\ref{h2}). Note that here $\alpha$ can be any element in $\Delta$. Therefore, in this case, all $f_{\beta,\gamma}(\lambda,\partial)$ are of the form (\ref{h2}).
Taking them into (\ref{i1}), we can directly obtain that (\ref{i1}) holds if and only if (\ref{coes}) is satisfied.
\end{proof}

\begin{lem}\label{lem10}
For $(\ref{coes})$, there exist some non-zero numbers $e_\alpha\in \mathbb{C}$ such that
\[
c_{\alpha,\beta}=\frac{(\alpha+b)(\beta+b)}{\alpha+\beta+b}\frac{e_{\alpha+\beta}}{e_{\beta}}.
\]
\end{lem}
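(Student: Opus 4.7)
The plan is to reduce the three-variable recursion (\ref{coes}) to a single-variable quantity by specializing one of the indices. Since $\Delta$ is an infinite additive subgroup of $\mathbb{C}$, we have $0\in\Delta$, so I would set $\gamma=0$ in (\ref{coes}). This yields
\begin{equation*}
c_{\alpha+\beta,0}=\frac{\alpha+\beta+b}{(\alpha+b)(\beta+b)}\,c_{\beta,0}\,c_{\alpha,\beta}.
\end{equation*}
Since all the $c$-coefficients are non-zero (this follows from the standing non-triviality assumption on $V$ together with Proposition \ref{pp1}, which forces every structure polynomial $f_{\beta,\gamma}(\lambda,\partial)$ to be non-zero, and hence the scalar prefactor $c_{\beta,\gamma}$ appearing in the forms (\ref{h1})--(\ref{h4}) is non-zero), we can divide and solve for $c_{\alpha,\beta}$ to obtain
\begin{equation*}
c_{\alpha,\beta}=\frac{(\alpha+b)(\beta+b)}{\alpha+\beta+b}\cdot\frac{c_{\alpha+\beta,0}}{c_{\beta,0}}.
\end{equation*}

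Setting $e_\alpha:=c_{\alpha,0}$ for each $\alpha\in\Delta$ then immediately gives the desired formula
\begin{equation*}
c_{\alpha,\beta}=\frac{(\alpha+b)(\beta+b)}{\alpha+\beta+b}\cdot\frac{e_{\alpha+\beta}}{e_\beta},
\end{equation*}
with $e_\alpha\neq 0$ by the preceding paragraph. As a sanity check, substituting this expression back into (\ref{coes}) collapses the triple product $c_{\beta,\gamma}c_{\alpha,\beta+\gamma}$ to $\frac{(\alpha+b)(\beta+b)(\gamma+b)}{\alpha+\beta+\gamma+b}\cdot\frac{e_{\alpha+\beta+\gamma}}{e_\gamma}$, and multiplying by $\frac{\alpha+\beta+b}{(\alpha+b)(\beta+b)}$ recovers exactly the desired $c_{\alpha+\beta,\gamma}$, so the ansatz is compatible with (\ref{coes}) at every triple $(\alpha,\beta,\gamma)$.

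There is essentially no hard step here since the cocycle-type relation (\ref{coes}) is already in a form suitable for coboundary reduction; the only thing to be careful about is ensuring $e_\alpha\neq 0$, which is why I would cite Proposition \ref{pp1} explicitly to rule out the degenerate case $c_{\alpha,0}=0$. The specialization $\gamma=0$ uses only that $0\in\Delta$, which is automatic, so the lemma reduces to this one-line manipulation.
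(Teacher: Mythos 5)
Your proposal is correct and is essentially identical to the paper's own proof: both specialize $\gamma=0$ in (\ref{coes}), use the non-vanishing of all $c_{\alpha,\beta}$ (from non-triviality of $V$ together with Proposition \ref{pp1}) to solve for $c_{\alpha,\beta}$, and set $e_\alpha=c_{\alpha,0}$. The only difference is cosmetic — you cite (\ref{coes}) directly, which in fact fixes an apparent typo in the paper, where the proof refers to (\ref{f1}) instead.
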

\begin{proof}
Since $V$ is a non-trivial $CL(b,\varphi)$-module, all $c_{\alpha,\beta}$ are not equal to $0$. Setting $\gamma=0$ in (\ref{f1}), we can get that $c_{\alpha,\beta}=\frac{(\alpha+b)(\beta+b)}{\alpha+\beta+b}\frac{c_{\alpha+\beta,0}}{c_{\beta,0}}$.
Let $e_{\alpha}=c_{\alpha,0}$ for any $\alpha\in\Delta$. Then we get that $c_{\alpha,\beta}=\frac{(\alpha+b)(\beta+b)}{\alpha+\beta+b}\frac{e_{\alpha+\beta}}{e_{\beta}}$.
\end{proof}

\begin{thm}\label{th3}
Assume that $V$ is a non-trivial $\Delta$-graded free intermediate module over $CL(b,\varphi)$. Then $V$ is isomorphic to either
$V_{c,e}$ or $V_{A_1,c}$ or $V_{A_2,c}$.
\end{thm}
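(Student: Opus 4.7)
The plan is to assemble the proof from the case-analysis already carried out in Propositions \ref{lem7}, \ref{lem13} and \ref{lem9}, together with the normalization provided by Lemma \ref{lem10}. Since $V$ is a non-trivial $\Delta$-graded free intermediate module, the contrapositive of Proposition \ref{pp1} guarantees that every structure polynomial $f_{\alpha,\gamma}(\lambda,\partial)$ is nonzero, and Lemma \ref{lem2} then forces each $f_{\beta,\gamma}(\lambda,\partial)$ to belong to exactly one of the four families (\ref{h1})--(\ref{h4}). The whole argument then reduces to deciding which of the three mutually exclusive global regimes occurs.

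Case A is the case where there exist some $\beta_0\neq 0$ and $\gamma_0\in\Delta$ with $f_{\beta_0,\gamma_0}(\lambda,\partial)$ of form (\ref{h3}). In this case Lemmas \ref{lem3}--\ref{lem6} pin down the form of $f_{\beta,\gamma}(\lambda,\partial)$ for every pair $(\beta,\gamma)$, and the resulting $q$-sequence is exactly $A_1$. Proposition \ref{lem7} then writes out the action explicitly up to a family of nonzero scalars $c_{\beta,\gamma}$ subject to the cocycle-type constraint (\ref{coes}). By Lemma \ref{lem10}, any solution of (\ref{coes}) is of the form $c_{\alpha,\beta}=\tfrac{(\alpha+b)(\beta+b)}{\alpha+\beta+b}\tfrac{e_{\alpha+\beta}}{e_\beta}$ for some nonzero scalars $e_\alpha$. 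The rescaling $M_\alpha\mapsto e_\alpha^{-1} M_\alpha$ preserves the $\mathbb{C}[\partial]$-grading and renormalizes the constants so that $c_{\beta,\gamma}=\beta+b$; the value of the remaining free parameter $c$ is simply $m_0$. Thus $V\cong V_{A_1,c}$.

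Case B is the analogous case in which some $f_{\beta_0,\gamma_0}(\lambda,\partial)$ has the form (\ref{h4}); Lemma \ref{lem8} together with Proposition \ref{lem13} supplies the same description relative to the sequence $A_2$, and the identical rescaling step yields $V\cong V_{A_2,c}$. In Case C, when no pair $(\beta_0,\gamma_0)$ gives a polynomial of form (\ref{h3}) or (\ref{h4}), Proposition \ref{lem9} shows that every $f_{\beta,\gamma}$ must be of form (\ref{h2}), the $q$-sequence satisfies $q_{\beta+\gamma}-q_\gamma=\beta/b$ (i.e.\ is of type $A_3$), and after the same normalization via Lemma \ref{lem10} one obtains $V\cong V_{c,e}$.

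The one step that deserves care, and which I expect to be the main subtlety, is verifying that Cases A, B, and C are genuinely mutually exclusive. Specifically, if some $f_{\beta_0,\gamma_0}$ is of form (\ref{h3}), then no other $f_{\beta_1,\gamma_1}$ can simultaneously be of form (\ref{h4}): the two scenarios fix incompatible values of $q_{\gamma_0}$ and its $\Delta$-translates, as encoded in the defining conditions on $A_1$ versus $A_2$. Once this incompatibility is noted (it follows directly by comparing the differences $q_{\beta+\gamma}-q_\gamma$ dictated by (\ref{h3}) and (\ref{h4})), the trichotomy is exhaustive, and the theorem follows by concatenating the three propositions with the normalization step.
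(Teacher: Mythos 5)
Your proposal is correct and follows essentially the paper's own route: the theorem is assembled by running Propositions \ref{lem7}, \ref{lem13} and \ref{lem9} according to whether some structure constant has the form (\ref{h3}), the form (\ref{h4}), or neither, and then normalizing the scalars via Lemma \ref{lem10}; your remark on the mutual exclusivity of the three regimes is implicit in Lemmas \ref{lem3}--\ref{lem6} and \ref{lem8}, which pin down every $f_{\beta,\gamma}$ once one exceptional form occurs. One small correction: the rescaling $M_\alpha\mapsto e_\alpha^{-1}M_\alpha$ does not give $c_{\beta,\gamma}=\beta+b$, since replacing $M_\gamma$ by $t_\gamma M_\gamma$ changes $c_{\beta,\gamma}$ by the factor $t_\gamma/t_{\beta+\gamma}$, and with $t_\gamma=e_\gamma^{-1}$ this squares rather than cancels the $e$-dependence; the correct normalization, as in the paper, is $M_\gamma'=\frac{e_\gamma}{\gamma+b}M_\gamma$, after which Lemma \ref{lem10} yields $c'_{\beta,\gamma}=\beta+b$.
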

\begin{proof}
Set $M_\gamma^{'}=\frac{e_\gamma}{\gamma+b}M_\gamma$ for any $\gamma\in \Delta$. Then this theorem can be directly obtained from Propositions \ref{lem7}, \ref{lem9}, \ref{lem13} and Lemma \ref{lem10} by
replacing $M_\gamma$ by $M_\gamma^{'}$.
\end{proof}
\begin{rmk}
According to the relation between the conformal modules of Lie conformal algebra and the modules of its coefficient algebra, by Theorem \ref{th3}, we can naturally obtain some representations of infinite-dimensional Lie algebra $\text{Coeff}(CL(b,\varphi))$.
\end{rmk}

{\bf Acknowledgments}
{This work was supported by the Scientific Research Foundation of Hangzhou Normal University (No. 2019QDL012) and the National Natural Science Foundation of China (No. 11871421, 11501515).}

\end{document}